\documentclass{article}
\usepackage{bbm}
\usepackage{caption}
\usepackage{rotating}
\usepackage[maxfloats=100]{morefloats}
\usepackage{listings}
\usepackage{booktabs}
\usepackage{xcolor}
\usepackage[title]{appendix}
\usepackage{makecell} 
\usepackage{multirow}

\usepackage[symbol]{footmisc}

\usepackage{longtable}
\usepackage{subcaption}
\usepackage[numbers]{natbib}
\usepackage{geometry}

\usepackage{verbatim}
\usepackage{xcolor}
\usepackage{graphicx}
\usepackage{enumitem}
\usepackage{comment}
\usepackage{subfiles}

\usepackage{todonotes}
\usepackage{tikz-qtree,tikz-qtree-compat}
\usepackage{scrextend}
\usepackage{framed}
\usepackage{placeins}
\usepackage{setspace}
 \usepackage[ruled,vlined]{algorithm2e}
\usepackage{mathtools,collcell,eqparbox}

\usepackage{multicol}
\usepackage{pgfplots}
\usepackage{tikz}
\usepackage{listings}
\usepackage{courier}
\usepackage{url}
\usepackage{color}
\usepackage[title]{appendix}
\usepackage{authblk}

\usepackage{amssymb}
\usepackage{amsthm}
\usepackage{ dsfont }
\usepackage{amsmath}

\usepackage[pagebackref = true]{hyperref}
\hypersetup{
  colorlinks = true,
  linkcolor = teal,
  anchorcolor = teal,
  citecolor = teal,
  filecolor = teal,
  urlcolor = teal
  }

\pgfplotsset{compat=1.15}
\newcounter{BMatrix}

\newcommand{\setmaxwd}[1]{%
 \eqmakebox[BM-\theBMatrix][\BMalign]{$#1$}%
}
\MHInternalSyntaxOn

\MHInternalSyntaxOff
\makeatother

\newtheorem{theorem}{Theorem}
\newtheorem{assumption}{Assumption}

\newtheorem{proposition}{Proposition}[section]

\newtheorem{corollary}{Corollary}[theorem]
\newtheorem{lemma}{Lemma}

\theoremstyle{definition}
\newtheorem{definition}{Definition}
\newtheorem{example}{Example}

\theoremstyle{remark}
\newtheorem{remark}{Remark}
\newtheorem*{remark*}{Remark}

\newcommand{\inv}{^{-1}}

\newcommand{\1}{\mathds{1}}

\newcommand{\R}{\mathbb{R}}

\newcommand{\Z}{\mathbb{Z}}
\newcommand{\N}{\mathbb{N}}

\newcommand{\ra}{\rightarrow}

\newcommand{\cd}{\cdot}
\newcommand{\ds}{\dots}
\newcommand{\card}{\mathrm{card}}

\newcommand{\mrm}[1]{\mathrm{#1}}

\newcommand{\PiH}{\Pi_{\mathrm{H}}}

\newcommand{\PiS}{\Pi_{\mathrm{S}}}

\newcommand{\piRL}{{\pi_{\mathrm{RL}}}}
\newcommand{\PiRL}{{\Pi_{\mathrm{RL}}}}

\newcommand{\cA}{\mathcal{A}}
\newcommand{\cB}{\mathcal{B}}
\newcommand{\cC}{\mathcal{C}}

\newcommand{\cF}{\mathcal{F}}

\newcommand{\cH}{\mathcal{H}}

\newcommand{\cP}{\mathcal{P}}
\newcommand{\cQ}{\mathcal{Q}}

\newcommand{\cS}{\mathcal{S}}

\newcommand{\spnorm}[1]{\left|#1\right|_\mathrm{span}}
\newcommand{\set}[1]{\left\{{#1}\right\}}

\newcommand{\floor}[1]{\left\lfloor{#1}\right\rfloor}

\newcommand{\abs}[1]{\left|#1\right|}
\newcommand{\sqbk}[1]{\left[ #1 \right]}
\newcommand{\sqbkcond}[2]{\left[ #1 \middle| #2 \right]}

\newcommand{\crbk}[1]{\left( #1 \right)}

\newcommand{\crbkcond}[2]{\left( #1 \middle| #2 \right)}

\newcommand{\bmx}[1]{\begin{bmatrix} #1 \end{bmatrix}}

\newcommand{\bd}[1]{\mathbf{#1}}

\definecolor{azure}{rgb}{0.0, 0.4, 0.9}

\definecolor{darkred}{rgb}{0.6, 0, 0}

\definecolor{codegreen}{rgb}{0,0.4,0}
\definecolor{codeblue}{rgb}{0.1,0.1,0.7}
\definecolor{codegray}{rgb}{0.5,0.5,0.5}
\definecolor{codepurple}{rgb}{0.58,0,0.82}
\definecolor{backcolour}{rgb}{0.97,0.97,0.97}
 
\lstdefinestyle{mystyle}{
  backgroundcolor=\color{backcolour},  
  commentstyle=\color{codegreen},
  keywordstyle=\color{magenta},
  numberstyle=\tiny\color{codegray},
  stringstyle=\color{codepurple},
  basicstyle=\scriptsize\ttfamily,
  identifierstyle=\color{codeblue},
  breakatwhitespace=false,     
  breaklines=true,         
  captionpos=b,          
  keepspaces=true,         
  numbers=left,          
  numbersep=4pt,         
  showspaces=false,        
  showstringspaces=false,
  showtabs=true,         
  tabsize=3
}
\numberwithin{equation}{section}

\title{Non-Rectangular Average-Reward Robust MDPs: \\
Optimal Policies and Their Transient Values}

\author[1]{Shengbo Wang}
\affil[1]{Daniel J. Epstein Department of Industrial and Systems Engineering\\
University of Southern California}

\author[2]{Nian Si}
\affil[2]{Department of Industrial Engineering and Decision Analytics\\
Hong Kong University of Science and Technology}
\date{February 2026}

\begin{document}
\maketitle
\begin{abstract}
We study non-rectangular robust Markov decision processes under the average-reward criterion, where the ambiguity set couples transition probabilities across states and the adversary commits to a stationary kernel for the entire horizon. We show that any history-dependent policy achieving sublinear expected regret uniformly over the ambiguity set is robust-optimal, and that the robust value admits a minimax representation as the infimum over the ambiguity set of the classical optimal gains, without requiring any form of rectangularity or robust dynamic programming principle. Under the weak communication assumption, we establish the existence of such policies by converting high-probability regret bounds from the average-reward reinforcement learning literature into the expected-regret criterion. We then introduce a transient-value framework to evaluate finite-time performance of robust optimal policies, proving that average-reward optimality alone can mask arbitrarily poor transients and deriving regret-based lower bounds on transient values. Finally, we construct an epoch-based policy that combines an optimal stationary policy for the worst-case model with an anytime-valid sequential test and an online learning fallback, achieving a constant-order transient value.
\end{abstract}

\section{Introduction}
Robust Markov decision processes (MDPs) provide a foundational framework for reliable sequential decision-making under model ambiguity by requiring the controller to optimize performance against the worst-case transition kernel within a specified ambiguity set. This framework has become increasingly important in contemporary stochastic control, reinforcement learning, and operations contexts, with applications in systems governed by partially specified physical parameters, environments shaped by policy-dependent feedback effects, and data-driven transition models subject to correlated estimation errors. In such settings, the modeler may have difficulty committing to a particular transition dynamic that could be misspecified, making robustness a central design consideration.

The predominant literature on robust MDPs focuses on ambiguity sets that satisfy some form of rectangularity, such as 
SA-rectangularity \citep{iyengar2005robust,gonzalez2002minimax,nilim2005robust},
S-rectangularity \citep{xu2010distributionally,wiesemann2013robust}, 
$r$-rectangularity \citep{Goyal2023Beyond_Rectangularity}, and 
$k$-rectangularity \citep{mannor2016robust}. The primary motivation for imposing rectangularity is tractability: it restores an appropriate dynamic programming principle and reduces the original robust control problem to solving a Bellman-type fixed-point equation. Under rectangularity, nature’s choices decompose across states or state–action pairs, so the adversary’s optimization can be performed locally at each decision epoch.

However, in many data-driven and structurally motivated ambiguity models, the rectangularity assumption is restrictive. Non-rectangular ambiguity sets, in which ambiguity is intrinsically coupled across states, arise naturally in applications: transition probabilities may be linked through shared parameters, joint statistical constraints, or global structural conditions, and thus cannot be perturbed independently on a state-by-state basis. We illustrate this point with two representative examples.

\textbf{Example 1:} (MLE-based confidence regions).
Suppose the transition kernel is estimated from data via maximum likelihood, and ambiguity is modeled through a joint confidence region around the MLE. Such regions are typically defined through likelihood-ratio or score-based constraints that couple all transition probabilities simultaneously. For instance, an asymptotic 
$\chi^2$-type confidence set constrains the entire transition matrix through a single quadratic form. As a result, perturbations to one state–action transition must be compensated elsewhere to remain within the global confidence region. The resulting ambiguity set is inherently non-rectangular. We refer the reader to the construction in \citet{wiesemann2013robust}. 

\textbf{Example 2:} (factorized MDPs). In many applications, transition kernels depend on a set of shared underlying latent factors. For instance, in healthcare settings, patient transitions may be influenced by common factors such as genetic background, demographic characteristics, and physiological attributes associated with particular diseases \cite{grand2023beyond}. In such models, ambiguity arises at the level of these latent factors rather than at individual state–action transitions. 
Importantly, the model ambiguity across factors is typically not independent, and the dependence of transition probabilities on these factors may be nonlinear. As a result, perturbations in one component of the factor space simultaneously affect multiple state–action transitions. The induced ambiguity set is therefore inherently non-rectangular and, in general, does not satisfy 
$r$-rectangularity either.

In contrast to the substantial progress and tractability achieved under rectangular MDPs, the non-rectangular setting is fundamentally more intricate, as the Bellman-type fixed-point equation typically fails to hold. In particular, optimal policies can no longer be expected to be Markovian or to admit a characterization through standard robust Bellman equations. Moreover, it is generally unclear whether the robust value admits any tractable dynamic programming representation.

These issues become even more delicate under the average-reward criterion, which is natural for ongoing systems without an intrinsic horizon \citep{wang2025bellman}. Compared with discounted formulations, average-reward control is already technically more subtle even in the classical, non-robust setting. Optimality conditions and policy structure depend sensitively on the communicating structure of the underlying Markov chain (e.g., multichain, weakly communicating, or unichain models), and the gain–bias decomposition separates steady-state performance from transient effects, introducing additional analytical complexity \citep{puterman2014MDP}. Moreover, long-run optimality alone does not address transient performance, raising the additional question of how to evaluate and control finite-time behavior in such models.
While these challenges are largely understood in the classical (non-robust) theory, they become substantially more delicate in robust models.

This paper addresses these challenges in non-rectangular average-reward robust MDPs. We impose no structural assumptions on the ambiguity set beyond weak communication, which guarantees that, in the non-robust case, the optimal average reward is well defined and independent of the initial state. Recognizing that Markovian policies may fail to be optimal under non-rectangular ambiguity, we adopt the standard formulation in \citet{grand2023beyond}: the controller may employ general history-dependent policies, while the adversary is stationary and commits to a single transition kernel $p\in\mathcal{P}$ for the entire horizon. This non-rectangular model encompasses a broad class of statistically natural ambiguity models, including those induced by joint confidence regions, maximum-likelihood estimators, or nonlinear factor models, while lying outside the classical robust dynamic programming framework that relies on rectangularity to ensure stagewise decomposability.

Under this framework we establish a crucial message: robust optimality under the average-reward criterion is fundamentally linked to learnability. Specifically, we introduce an abstraction of online RL algorithms as a class of history-dependent policies that achieve sublinear regret, and we show that such policies are robust-optimal for (non-)rectangular average-reward problem. This connection offers a different perspective: rather than restoring tractability through structural assumptions on the ambiguity set, robust optimality fundamentally emerges from the ability to achieve online RL over all transition models in the ambiguity set.

While optimal, these online RL policies typically have poor finite-time performance: the transient value (TV) must tend to negative infinity as the horizon increases due to the need for persistent exploration. Motivated by this issue, we go beyond long-run optimality and study finite-time behavior, asking whether one can achieve a uniformly lower bounded TV over any finite horizon. We show that this is indeed possible by proposing a hybrid, epoch-based policy that, perhaps surprisingly, achieves a uniformly lower bounded TV of the same order as the span of the value function.

To obtain this finite-time performance, we design Policy~\ref{policy:O1_TV_policy}, which alternates between exploitation and learning. The policy follows an average-reward optimal stationary policy for a candidate worst-case model while simultaneously running a Markov chain sequential probability ratio test (SPRT) \citep{wald1945sprt}. If the observed data are inconsistent with the model currently being followed, the policy falls back to a standard online RL routine for the remainder of that stage. By scheduling stages to grow over time and making ``false alarms'' increasingly rare, the policy adapts quickly when the model is wrong while preserving robust optimality.

\subsection{Review of Relevant Literature}
\textbf{Rectangular robust MDPs:}  The discounted-reward theory for robust MDPs under rectangular ambiguity sets is now well developed:
Rectangularity restores a dynamic programming principle and leads to Bellman-type optimality equations
\citep{iyengar2005robust,wiesemann2013robust,xu2010distributionally,wang2023foundationRMDP,grand2024tractable,wang2025statistical}.
By contrast, the average-reward setting has received comparatively less attention and has only recently begun to be systematically developed. 
For example, \citet{wang2023avg_unichain_dp} study the average-reward robust problem under
SA-rectangularity together with uniform unichain assumptions. \citet{grand2023beyond} investigate
Blackwell optimality, proving that $\epsilon$-Blackwell optimal policies exist under SA-rectangularity,
and highlighting important pathologies beyond this regime: in S-rectangular robust MDPs, average-reward optimal policies may fail to exist. Relatedly, \citet{grand2025playing} show that for 
$s$-rectangular robust MDPs, Blackwell 
$\epsilon$-optimal policies may fail to be Markovian, while full history dependence is unavoidable in general. More recently, \citet{wang2025bellman} develops dynamic programming and Bellman equations for average-reward formulations under broad conditions within the rectangular framework. Taken together, these works highlight both the analytical strength of rectangularity and the fragility of average-reward optimality, thereby motivating our study of non-rectangular ambiguity under a stationary committing adversary in the average-reward setting.

\textbf{Non-rectangular robust MDPs:}  Recent progress on robust MDPs beyond rectangularity has focused on algorithmic methods that bypass the breakdown of robust Bellman recursions under coupled ambiguity. \citet{li2026policy} develop policy-gradient algorithms for robust infinite-horizon MDPs with non-rectangular transition ambiguity sets, including methods that achieve globally optimal robust policy evaluation, albeit at significant computational cost. For the average-reward criterion, \citet{wang2025provable} propose a projected policy-gradient method for ergodic tabular robust average-reward MDPs with general (non-rectangular) ambiguity and analyze convergence of both the policy updates and the inner worst-case transition evaluation. Complementing gradient-based approaches, \citet{kumar2025dual,kumar2025non} identify a tractable family of coupled $L_p$-bounded transition uncertainty sets, derive a dual formulation, and obtain polynomial-time robust policy evaluation for the $L_1$ case. More recently, \citet{satheesh2026provably} provide provable efficiency guarantees for robust MDPs with general policy parameterizations under  non-rectangular uncertainty by reducing average-reward problems to entropy-regularized discounted formulations amenable to first-order methods. In a related but distinct setting, \citet{gadot2024solving} study non-rectangular \emph{reward}-robust MDPs with known dynamics, showing that coupled reward ambiguity induces a visitation-frequency regularizer and enabling convergent policy-gradient learning. In contrast to the above work, which primarily targets computational methods and convergence guarantees for computing robust policies, our paper takes a structural viewpoint in the stationary-adversary, non-rectangular average-reward setting: we characterize robust optimality through the existence of online-RL policies, culminating in a robust-optimal policy with constant-order transient value via sequential testing.

\textbf{Online RL for average-reward MDPs:} The online RL literature for average-reward MDPs has developed along several threads. \citet{jaksch2010near} introduced the UCRL2 algorithm, which achieves $\widetilde{O}(\sqrt{T})$ regret under the diameter assumption. For the more general weakly communicating setting, \citet{bartlett2009WC} proposed REGAL, attaining $\widetilde{O}(\sqrt{T})$ regret with dependence on the span of the optimal bias vector rather than the diameter. \citet{fruit2018efficient} and \citet{wei2020model} developed algorithms with improved span-dependent bounds. More recently, \citet{zhang2023amdp_regret} provided a model-free algorithm (UCB-AVG) with near-optimal high-probability regret bounds under weak communication. Our work leverages these advances by showing that any such algorithm with sublinear regret can be converted into a robust-optimal policy for non-rectangular average-reward robust MDPs.

\subsection{Contributions and Organization}
Our contributions and main results are organized as follows. 

First, we connect online average-reward RL to the non-rectangular robust average-reward MDP. In Theorem~\ref{thm:value_RL_policy}, we show that any online RL algorithm with uniform sublinear expected or high-probability regret can be converted into a robust optimal policy.

We also clarify that such policies need not exist in full generality. Without additional structural conditions on the ambiguity set, we construct an instance in Proposition~\ref{prop:no_sublinear_regret} in which every policy incurs linear regret. Nevertheless, Proposition~\ref{prop:hp_RL_policy} shows that under weak communication, standard high-probability regret guarantees for average-reward RL can be converted into an expected regret bound, which in turn implies the existence of robust optimal online RL policies.

We then study finite-time behavior through a new performance notion, the transient value (TV), which can be viewed as a refined ``negative regret'' term measuring cumulative deviation from the robust average reward. In Proposition~\ref{prop:TL_upper_bd}, we establish a general upper bound on TV in terms of the span of worst-case bias functions, showing that one cannot accumulate an indefinitely growing expected deviation from the optimal robust average reward. We further relate transient performance to learning speed: for an RL policy with a given regret rate, Proposition~\ref{prop:TV_lower_bd_RL_policy} provides a corresponding lower bound on its transient value, making explicit how typical regret growth (e.g., $\sqrt{T}$) translates into the corresponding TV scaling (e.g., $-\sqrt{T}$).

Finally, Section~\ref{sec:construct_policy} gives an explicit construction of a robust-optimal policy with TV uniformly lower bounded by the span of the worst-case bias function. The policy operates in epochs: it executes a candidate worst-case stationary policy while simultaneously running an anytime-valid Markovian composite sequential probability ratio test (SPRT) \citep{wald1945sprt}. Upon rejection, it switches to a learning-based procedure within the same epoch. In Theorem~\ref{thm:pistar_tv_bound}, we show that this design yields a constant-order lower bound on TV while maintaining robust optimality.

A key technical ingredient behind this result is the design and analysis of the composite SPRT for Markov chains, with carefully scheduled type-I error control and logarithmic expected detection delay under alternatives (Theorem~\ref{thm:rej_time}), which allows the policy to adapt quickly when the current model is misspecified. The second ingredient is the scheduling and use of the online RL policy after rejection (whether accidental or correct), ensuring that the controller can adequately explore and exploit when the test indicates a model mismatch.

\section{Canonical Construction and the Optimal Robust Control Problem}\label{sec:canonical_construction}
In this section, we present a brief but self-contained canonical construction of the probability space, the processes of interest, and the controller's and adversary's policy classes. We adopt the notation and framework of \citet{wang2023foundationRMDP}, to which we refer the reader for additional details.

Let $S,A$ be finite state and action spaces, each equipped with the discrete Borel $\sigma$-fields $\cS$ and $\cA$, respectively. Define the underlying measurable space $(\Omega, \cF)$ with $\Omega = (S\times A)^{\Z_{\geq 0}}$ and $\cF$ the corresponding cylinder $\sigma$-field. The process $\{(X_t,A_t), t \geq 0\}$ is defined by point evaluation, i.e., $X_t(\omega) = s_t$ and $A_t(\omega) = a_t$ for all $t \geq 0$ and any $\omega = (s_0,a_0,s_1,a_1,\ldots) \in \Omega$.

The history set $\bd{H}_t$ at time $t$ contains all $t$-truncated sample paths $$\bd{H}_t := \set{h_t = (s_0,a_0,\ds,a_{t-1} ,s_t): \omega = (s_0,a_0,s_1\ds )\in\Omega}.$$
We also define the random element $H_t:\Omega\ra \bd{H}_t$ by point evaluation $H_t(\omega) = h_t$, and the $\sigma$-field $\cH_t:=\sigma(H_t)$.

Motivated by applications in which the controller must be restricted to a subset of randomized decisions (e.g., to deterministic actions), we consider the constraint set as a prescribed subset $\cQ \subseteq \cP(\cA)$. Let $\delta_a\in\cP(\cA)$ denote the point mass measure at $a\in A$. For the compatibility with the RL literature, we will frequently require that $\set{\delta_a:a\in A }\subseteq  \cQ$.

A controller policy $\pi$ is a sequence of decision rules $\pi = (\pi_0,\pi_1,\pi_2,\ldots)$ where each $\pi_t$ is a measure-valued function $\pi_t:\bd{H}_t \ra \cQ$, represented in conditional distribution form as $\pi_t(a|h_t) \in [0,1]$ with $\sum_{a \in A}\pi_t(a|h_t) = 1$. The history-dependent controller policy class is therefore $$\PiH(\cQ) := \{ \pi = (\pi_0,\pi_1,\ldots) : \pi_t \in \{\bd{H}_t \ra \cQ\}, \ \forall t \geq 0 \}.$$

A controller policy $\pi = (\pi_0,\pi_1,\ldots)$ is stationary if for any $t_1,t_2 \geq 0$ and $h_{t_1} \in \bd{H}_{t_1}, h'_{t_2} \in \bd{H}_{t_2}$ such that $s_{t_1} = s'_{t_2}$, we have $\pi_{t_1}(\cdot|h_{t_1}) = \pi_{t_2}(\cdot|h'_{t_2})$. In particular, this means $\pi_t(a|h_{t}) = \Delta(a|s_t)$ where $h_t =(s_0,a_0,\ds, s_t) $ for some $\Delta:S \to \cQ$ for all $t \geq 0$. Thus, a stationary controller policy can be identified with $\Delta:S \to \cQ$, i.e., $\pi = (\Delta,\Delta,\ldots)$. Accordingly, the stationary policy class for the controller is $$\PiS(\cQ) := \{ (\Delta,\Delta,\ldots) : \Delta \in \{S \to \cQ\} \},$$ which is identified with $\{S \to \cQ\}$.

In this paper, we consider a general non-rectangular stationary adversary who, similar to the controller side, chooses a fixed (over the entire control horizon) transition kernel from a prescribed subset $\cP\subseteq  \set{S\times A\ra\cP(\cS)}$. Here, non-rectangularity means that no further factorization constraints are imposed on $\cP$. For instance, popular rectangular models in the literature include S-rectangularity--$\cP=\bigtimes_{s\in S}\cP_{s}$ with $\cP_{s}\subseteq  \set{A\ra\cP(\cS)}$ \citep{wiesemann2013robust,xu2010distributionally}--and SA-rectangularity--$\cP=\bigtimes_{s\in S}\bigtimes_{a\in A}\cP_{s,a}$ with $\cP_{s,a}\subseteq  \cP(\cS)$ \citep{iyengar2005robust,nilim2005robust}.

Given an initial distribution $\mu\in\cP(\cS)$, a controller policy $\pi\in\PiH$, and an adversarial kernel $p\in\cP$, the probability measure $P_{\mu}^{\pi,p}$ on $(\Omega,\cF)$ is uniquely determined by
\[
P_{\mu}^{\pi,p}(H_t=h_t)
=\mu(s_0)\cd\pi_0(a_0| h_0)p(s_1| s_0,a_0)\cdots
\pi_{t-1}(a_{t-1}| h_{t-1})p(s_t| s_{t-1},a_{t-1})
\]
for all $t$ and all histories $h_t=(s_0,a_0,\ldots,s_t)\in\bd{H}_t$.

We define the upper and lower long-run average rewards associated with $(\mu,\pi,p)$ by
\[
\overline{\alpha}(\mu,\pi,p)
:= \limsup_{n\to\infty} E_\mu^{\pi,p}\left[\frac{1}{n}\sum_{k=0}^{n-1} r(X_k,A_k)\right],
\qquad
\underline{\alpha}(\mu,\pi,p)
:= \liminf_{n\to\infty} E_\mu^{\pi,p}\left[\frac{1}{n}\sum_{k=0}^{n-1} r(X_k,A_k)\right].
\]
In general, $\limsup$ and $\liminf$ need not coincide when $\pi$ is history-dependent.

This paper studies the optimal robust control of these criteria for a robust MDP instance $(\cQ,\cP,r)$. The upper and lower optimal robust control values are defined by
\[
\overline{\alpha}(\mu,\Pi,\cP)
:= \sup_{\pi\in\Pi}\inf_{p\in\cP}\overline{\alpha}(\mu,\pi,p),
\qquad
\underline{\alpha}(\mu,\Pi,\cP)
:= \sup_{\pi\in\Pi}\inf_{p\in\cP}\underline{\alpha}(\mu,\pi,p),
\]
where the controller's policy class $\Pi$ is either $\PiH(\cQ)$ or $\PiS(\cQ)$.

\subsection{Dynamic Programming for Classical Average-Reward MDPs}

In this section, we review the dynamic programming properties satisfied by non-robust average-reward MDPs. In the classical MDP setting, the adversary is absent, and for each controlled transition kernel $p$ the upper and lower optimal control value is given by 
$\sup_{\pi\in\PiH(\cQ)}\overline{\alpha}(\mu,\pi,p)$ and $\sup_{\pi\in\PiH(\cQ)} \underline{\alpha}(\mu,\pi,p).$

Since it is assumed that $\set{\delta_a:a\in A}\subseteq   \cQ$, the dynamic programming principle for average-reward MDP (see, e.g., \citet[Theorem 9.1.4 and 9.1.7]{puterman2014MDP}) implies that \begin{equation}\label{eqn:classical_amdp_dpp}
\sup_{\pi \in \PiH(\cQ)}\underline{\alpha}(\mu,\pi,p) = \sup_{\pi \in\PiS(\cQ)}\underline{\alpha}(\mu,\pi,p) = \sum_{s\in S}\mu(s)\alpha_p(s),
\end{equation} 
and the same holds for $\overline\alpha$. Here $\alpha_p: S\ra\R$ is part of the solution $(\alpha_p,v_p)$ to the \textit{modified} (see the discussion in \citet{puterman2014MDP} comparing equations 9.1.2 and 9.1.8) multichain Bellman equation
\begin{equation}\label{eqn:multichain_eqn}\begin{aligned}
  \alpha_p(s) &= \max_{a\in A}\sum_{s'\in S}p(s'|s,a)\alpha_p(s') \\
  v_p(s)&= \max_{a\in A}\set{r(s,a) - \alpha_p(s) + \sum_{s'\in S}p(s'|s,a)v_p(s')}
\end{aligned}
\end{equation}
for all $s\in S$.
A solution to \eqref{eqn:multichain_eqn} always exists when the state and action spaces are finite \citep[Theorem 9.1.4 and Proposition 9.1.1]{puterman2014MDP}. Moreover, in this setting, $\alpha_p$ is unique in the sense that if $(\alpha',v')$ also solves \eqref{eqn:multichain_eqn}, then $\alpha_p = \alpha'$.

\section{Robust Optimality of Online RL Policies}

\subsection{Online RL Algorithms and Expected Regret}
In this section, we define regret under the average-reward criterion and provide an abstraction of online RL algorithms viewed as history-dependent policies.

Regret for an online RL policy measures the deviation of a policy’s realized value from the optimal long-run average reward over a finite horizon $T$. Define
\begin{equation}\label{eqn:regret_def}
R_{r,p}(T) := \sum_{t=0}^{T-1}\sqbk{\alpha_p(X_0) - r(X_t,A_t)},
\end{equation}

where $\alpha_p$ is the unique solution to \eqref{eqn:multichain_eqn}. Then the expected regret of a policy $\pi$ is given by $E_\mu ^ {\pi,p}R_{r,p}(T)$. Because $S$ and $A$ are finite, any history-dependent policy has expected regret $E_\mu^{\pi,p}R_{r,p}(T) = O(T)$ as $T\to\infty$. 

An online RL algorithm can be interpreted as a systematic procedure that selects (possibly randomized) actions based on the observed history of states and actions and thus induces a history-dependent policy $\pi\in\PiH$. The key distinction between a reasonable online RL procedure and an arbitrary history-dependent policy is \emph{sublinear expected regret}. Intuitively, achieving sublinear expected regret requires the induced policy to approximate optimal decisions with arbitrarily small error as $T\to\infty$. This motivates the following definition.
\begin{definition}[Online RL Policies]\label{def:online_RL}
We say that a policy $\piRL\in\PiH(\cQ)$ achieves online RL over the class of transition kernels $\cP\subseteq   \set{S\times A\ra\cP(S)}$ if for all $p\in\cP$, $\mu\in \cP(S)$, and $r:S\times A\ra [0,1]$, 
$$\limsup_{T\ra \infty} \frac{1}{T }E_{\mu}^{\piRL,p}R_{r,p}(T) = 0.$$
We denote policies $\pi\in\PiH(\cQ)$ that achieve online RL over $\cP$ by $\PiRL(\cQ,\cP)$.
\end{definition}
We note that in the literature, online RL algorithms often assume a fixed initial distribution, such as the uniform distribution or a specific starting state. However, most approaches readily extend to arbitrary initial distributions. 

\subsection{Optimality of Online RL Policies}
We define the robust optimal average reward as the worst-case (over the ambiguity set) classical optimal gain, weighted by the initial distribution:
\begin{equation}\label{eqn:alpha_star_mu}
 \alpha^*(\mu):=\inf_{p\in\cP}\sum_{s\in S}\mu(s)\alpha_p(s),\quad\text{and}\quad \alpha^*(s):= \alpha^*(\delta_s)
\end{equation}
for all $s\in S$, where $\delta_s\in\cP(\cS)$ is the point mass measure at $s$.
With these definitions, we are ready to present a main result.

\begin{theorem}\label{thm:value_RL_policy}
Assume that there exists $\piRL\in \PiRL(\cQ,\cP)$. Then, for all $\mu\in\cP(S)$, $$\begin{aligned}
  \underline{\alpha}(\mu,\PiH(\cQ),\cP) &= \inf_{p\in\cP}\underline{\alpha}(\mu,\piRL,p)\\
  &=\inf_{p\in\cP}\sup_{\pi\in\PiH(\cQ)}\underline\alpha(\mu,\pi,p) = \inf_{p\in\cP}\sup_{\pi\in\PiS(\cQ)}\underline\alpha(\mu,\pi,p) \\
  &= \alpha^*(\mu).
\end{aligned}$$
The same result holds true if $\underline{\alpha}$ is replaced with $\overline{\alpha}$. 
\end{theorem}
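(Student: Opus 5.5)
The plan is to sandwich every quantity in the chain between $\alpha^*(\mu)$ from above and from below, using the sublinear-regret property of $\piRL$ for the lower bound and the classical dynamic programming identity \eqref{eqn:classical_amdp_dpp} for the upper bound. The chain is then closed by the elementary sup–inf $\le$ inf–sup inequality.

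\emph{Upper bound.} For any $\pi\in\PiH(\cQ)$ and any $p\in\cP$, we have $\underline\alpha(\mu,\pi,p)\le \sup_{\pi'\in\PiH(\cQ)}\underline\alpha(\mu,\pi',p)$. Taking the infimum over $p$ and then the supremum over $\pi$ gives
\[
\underline{\alpha}(\mu,\PiH(\cQ),\cP)\le \inf_{p\in\cP}\sup_{\pi\in\PiH(\cQ)}\underline\alpha(\mu,\pi,p).
\]
By \eqref{eqn:classical_amdp_dpp}, which applies because $\set{\delta_a:a\in A}\subseteq\cQ$, the inner supremum equals $\sup_{\pi\in\PiS(\cQ)}\underline\alpha(\mu,\pi,p)=\sum_s\mu(s)\alpha_p(s)$. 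Taking the infimum over $p$ therefore gives exactly $\alpha^*(\mu)$. The same identity holds for $\overline\alpha$.

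\emph{Lower bound via $\piRL$.} Fix $p\in\cP$ and apply Definition~\ref{def:online_RL} with the given reward $r$; I would assume here that $r$ takes values in $[0,1]$, or otherwise rescale it affinely, since the definition quantifies over all such $r$. Expanding the regret gives
\[
\frac1T E_\mu^{\piRL,p}\sum_{t<T} r(X_t,A_t) = \sum_s\mu(s)\alpha_p(s)-\frac1T E_\mu^{\piRL,p}R_{r,p}(T),
\]
because $E_\mu^{\piRL,p}\alpha_p(X_0)=\sum_s\mu(s)\alpha_p(s)$. Since $\limsup_T \frac1T E R_{r,p}(T)=0$, taking $\liminf_T$ yields $\underline\alpha(\mu,\piRL,p)\ge\sum_s\mu(s)\alpha_p(s)$. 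Taking the infimum over $p$ gives $\inf_p\underline\alpha(\mu,\piRL,p)\ge\alpha^*(\mu)$. The same bound holds for $\overline\alpha$, since $\overline\alpha\ge\underline\alpha$.

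\emph{Closing the chain.} Combining the two bounds,
\[
\alpha^*(\mu)\le\inf_p\underline\alpha(\mu,\piRL,p)\le\underline\alpha(\mu,\PiH(\cQ),\cP)\le\inf_p\sup_{\PiH}\underline\alpha=\inf_p\sup_{\PiS}\underline\alpha=\alpha^*(\mu).
\]
All quantities are therefore equal. The argument for $\overline\alpha$ is identical, using the $\overline\alpha$ version of \eqref{eqn:classical_amdp_dpp}.

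The only delicate point is the lower-bound step. It must use the regret definition, which is anchored at $\alpha_p(X_0)$ with $X_0\sim\mu$, so that the benchmark matches $\sum_s\mu(s)\alpha_p(s)$ in the multichain case, where $\alpha_p$ may depend on the state. This matching holds precisely because the benchmark enters linearly through the initial state. Beyond this point, no rectangularity or interchange argument is needed, since the inf–sup inequality goes in the right direction automatically.
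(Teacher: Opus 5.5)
Your proposal is correct and follows the paper's proof essentially step for step. The paper also uses \eqref{eqn:classical_amdp_dpp} to identify the last three quantities with $\alpha^*(\mu)$, uses weak duality plus policy-class inclusion to sandwich $\underline{\alpha}(\mu,\PiH(\cQ),\cP)$, and uses the regret definition anchored at $\alpha_p(X_0)$ to force $\underline{\alpha}(\mu,\piRL,p)=\sum_{s}\mu(s)\alpha_p(s)$, except that it records the pointwise equality $\underline{\alpha}(\mu,\piRL,p)=\overline{\alpha}(\mu,\piRL,p)=\sum_s\mu(s)\alpha_p(s)$ before taking infima, whereas you close the chain directly at the level of $\inf_p$ via $\overline{\alpha}\ge\underline{\alpha}$, which is equivalent.
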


\begin{proof}[Proof of Theorem \ref{thm:value_RL_policy}]

From the dynamic programming theory for classical MDPs, we have the characterization \eqref{eqn:classical_amdp_dpp} where \[
\sup_{\pi\in\PiH(\cQ)}\underline{\alpha}(\mu,\pi,p) = \sup_{\pi\in\PiS(\cQ)}\underline{\alpha}(\mu,\pi,p) = \sum_{s\in S}\mu(s)\alpha_p(s),
\]
and the same holds for $\overline\alpha$. Taking the infimum over $p\in\cP$, we get the equality of the last three quantities in Theorem \ref{thm:value_RL_policy}.

Moreover, by weak duality and policy class inclusion, it is not hard to see that
$$\inf_{p\in\cP}\underline{\alpha}(\mu,\piRL,p)\leq \sup_{\pi\in\PiH(\cQ)}\inf_{p\in\cP}\underline{\alpha}(\mu,\pi,p) = \underline{\alpha}(\mu,\PiH(\cQ),\cP) \leq \inf_{p\in\cP}\sup_{\PiH(\cQ)}\underline{\alpha}(\mu,\pi,p),$$
with the same inequalities holding for $\overline{\alpha}$.

Therefore, if we can show that for all $p\in\cP$, 
\begin{equation}\label{eqn:to_show_RL_policy_val}
  \underline{\alpha}(\mu,\piRL,p) = \overline{\alpha}(\mu,\piRL,p) = \sum_{s\in S}\mu(s)\alpha_p(s), 
\end{equation}
then the conclusion of Theorem \ref{thm:value_RL_policy} would follow. 

To show \eqref{eqn:to_show_RL_policy_val}, we recall that by Definition \ref{def:online_RL}, $\piRL\in\PiRL(\cQ,\cP)$ satisfies that 
$$\begin{aligned}0 &= \limsup_{T\ra \infty} \frac{1}{T }E_{\mu}^{\piRL,p}R_{r,p}(T) \\
&=\sum_{s\in S}\mu(s)\alpha_p(s)-\liminf_{T\ra\infty} E_\mu^{\piRL,p}\sqbk{\frac{1}{T}\sum_{t=0}^{T-1}r(X_t,A_t)}\\
&= \sum_{s\in S}\mu(s)\alpha_p(s) - \underline{\alpha}(\mu,\piRL,p)
\end{aligned}$$
for all $p\in\cP$, $\mu\in \cP(S)$, and $r:S\times A\ra [0,1]$. 

Moreover, combine this with \eqref{eqn:classical_amdp_dpp}, we conclude that
$$\begin{aligned}\sum_{s\in S}\mu(s)\alpha_p(s) &= \underline{\alpha}(\mu,\piRL,p) \\
&\leq \overline{\alpha}(\mu,\piRL,p)\\
&\leq\sup_{\pi\in\PiH(\cQ)}\overline\alpha(\mu,\pi,p) \\
&= \sum_{s\in S}\mu(s)\alpha_p(s).
\end{aligned}$$
This implies \eqref{eqn:to_show_RL_policy_val}, hence completes the proof.
\end{proof}

\subsection{Weak Communication and the Existence of Online RL Policies}\label{sec:WC_and_online_RL}

While algorithms for online RL under the general gain setting remain (justifiably, to be discussed in Example \ref{example:no_online_RL_alg}) underexplored, there has been substantial interest and progress in the RL literature under various notions of communicating structures. Among these, one of the most general settings that has been actively studied and has yielded several positive results is that of weakly communicating MDPs.

In this section, we first present a simple example showing that, without additional assumptions on $\cP$, it is possible that $\PiRL(\cQ,\cP) = \varnothing$. In such a case, Theorem~\ref{thm:value_RL_policy} becomes vacuous. However, we demonstrate that Theorem~\ref{thm:value_RL_policy} remains meaningful--namely, that there exists $\piRL \in \PiRL(\cQ,\cP)$--when we restrict attention to weakly communicating $\cP$ as defined in Definition~\ref{def:wc}. To establish the existence of such a $\piRL$, we introduce a principled method to convert almost any online RL algorithm for weakly communicating MDPs that satisfies a sublinear high-probability regret upper bound into a policy that achieves online RL in the sense of Definition~\ref{def:online_RL}.

\begin{example}\label{example:no_online_RL_alg}
Consider the following setting with 3 states $\set{i,e_1,e_2}$ (one initial state and two ending states that are absorbing) and 2 actions $\set{a_1,a_2}$. We consider $\cP = \set{p^{(1)},p^{(2)}}$ with
$$p^{(1)}(\cd|\cd,a_1)=\bmx{0 &1&0\\0&1&0\\0&0&1 },\qquad p^{(1)}(\cd|\cd,a_2)=\bmx{0 &0&1\\0&1&0\\0&0&1 },$$ $p^{(2)}(\cd|\cd,a_1) = p^{(1)}(\cd|\cd,a_2)$, and $p^{(2)}(\cd|\cd,a_2) = p^{(1)}(\cd|\cd,a_1)$. So, both $e_1$ and $e_2$ are absorbing states under $p^{(i)}$, $i=1,2$ regardless which action is taken. The reward function is defined by $r(i,\cd) = 0$, $r(e_1,\cd) =1$, and $r(e_2,\cd) =0$. 

We consider any policy $\pi:=(\pi_0,\pi_1,\ds)\in\PiH(\cQ)$. Choose the initial distribution as the point mass measure at state $i$ and write $E_i^{\pi,p}$ as a shorthand for $E_{\delta_i}^{\pi,p}$. Let $b:=\min \set{\pi_0(a_1|i),\pi_0(a_2|i)}$. In this setting, we show that the following Proposition holds 
\begin{proposition}\label{prop:no_sublinear_regret}
For any $\cQ\subseteq  \cP(\cA)$, the regret for any policy $\pi\in\PiH(\cQ)$ satisfies 
$$\max_{p\in\cP}\limsup_{T\ra\infty}\frac{1}{T}E_i^{\pi,p}R_{r,p}(T) = 1-b \geq\frac{1}{2}.$$
In particular, $\PiRL(\cQ,\cP) = \varnothing$. 
\end{proposition}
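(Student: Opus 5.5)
The plan is to compute everything explicitly. The example is built so that only the first decision matters: whatever happens at time $0$, the chain enters an absorbing state at time $1$ and stays there. So the regret is determined by the single randomized choice $\pi_0(\cdot|i)$.

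\textbf{Step 1: the optimal gains.} Under $p^{(1)}$, action $a_1$ sends $i$ to $e_1$ and $a_2$ sends $i$ to $e_2$. Since $e_1,e_2$ are absorbing with rewards $1$ and $0$, the multichain equations \eqref{eqn:multichain_eqn} give $\alpha_{p^{(1)}}(e_1)=1$ and $\alpha_{p^{(1)}}(e_2)=0$. They also give $\alpha_{p^{(1)}}(i)=\max_a\sum_{s'}p^{(1)}(s'|i,a)\alpha_{p^{(1)}}(s')=1$. The same holds for $p^{(2)}$ with the roles of the actions swapped, so $\alpha_{p^{(j)}}(i)=1$ for $j=1,2$. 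To be safe, one can exhibit an explicit solution $v$ of the second equation: take $v(e_1)=v(e_2)=0$ and $v(i)=-1$. By the uniqueness of $\alpha_p$ stated earlier, these are the gains.

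\textbf{Step 2: the regret under each kernel.} Write $q_1=\pi_0(a_1|i)$ and $q_2=\pi_0(a_2|i)$. Under $p^{(1)}$, $X_1=e_1$ with probability $q_1$ and $X_1=e_2$ with probability $q_2$, and $X_t=X_1$ for all $t\ge1$, whatever later actions are taken. Because $r(i,\cdot)=0$,
\[
E_i^{\pi,p^{(1)}}R_{r,p^{(1)}}(T)=T-(T-1)q_1 .
\]
Hence $\limsup_T \frac1T E_i^{\pi,p^{(1)}}R_{r,p^{(1)}}(T)=1-q_1$. Symmetrically, the limit under $p^{(2)}$ is $1-q_2$.

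\textbf{Step 3: the maximum.} Taking the larger of the two gives $\max(1-q_1,1-q_2)=1-\min(q_1,q_2)=1-b$. Since $q_1+q_2=1$, we have $b\le 1/2$, so $1-b\ge 1/2$.

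This holds for every $\cQ$, because only $\pi_0(\cdot|i)$ enters the computation. In particular, the kernel attaining the maximum has a positive limsup, with the initial distribution $\delta_i$ and the given $r$ (which takes values in $[0,1]$). This violates Definition~\ref{def:online_RL}, so $\PiRL(\cQ,\cP)=\varnothing$.

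\textbf{Main obstacle.} The only slightly delicate step is confirming that $\alpha_p(i)=1$ under the multichain equations, rather than some averaged value; supplying the explicit solution $v$ above settles it. Everything else is elementary bookkeeping.
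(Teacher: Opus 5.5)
Your proposal is correct and follows the paper's proof essentially step for step. You compute $\alpha_p(i)=1$ for both kernels, obtain $\frac1T E_i^{\pi,p^{(k)}}R_{r,p^{(k)}}(T)=1-\frac{T-1}{T}\pi_0(a_k|i)$, and take the maximum over $k$ using $b\le 1/2$. The one addition is your explicit bias vector $v$, which certifies $\alpha_p(i)=1$ through the multichain equations; the paper instead argues this directly from reaching the absorbing reward-$1$ state.
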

\begin{proof}[Proof of Proposition \ref{prop:no_sublinear_regret}]
First, note that for $k = 1,2$, under kernel $p^{(k)}$ and starting from state $i$, taking action $a_k$ always leads to $e_1$ as the next state. Since $e_1$ is an absorbing state with reward $1$, the long-run average reward under this action is $1$. Consequently, $\alpha_p(i) = 1$ for all $p \in \cP$.

So, we have that for $k = 1,2$
\begin{align*}\frac{1}{T}E_i^{\pi,p^{(k)}}R_{r,p}(T) &= 1 - E^{\pi,p^{(k)}}_i\sqbk{\frac{1}{T}\sum_{t=0}^{T-1} r(X_t,A_t)} \\
&= 1-E_i^{\pi,p^{(k)}}\sqbk{\frac{T-1}{T}\1\set{X_1 = e_1}}\\
&= 1-\frac{T-1}{T}\pi_0(a_k|i)
\end{align*}
where the last equality follows from the observation under kernel $p^{(k)}$ and starting from state $i$, taking action $a_k$ always leads to $e_1$ as the next state; but taking action $a_{(k\text{ mod }2)+1}$ always leads to $e_2$. Taking $T\ra\infty$ and $\max$ over $k = 1,2$ and noting that $b\leq 1/2$, we obtain the statement of the proposition. 
\end{proof}
\end{example}

While Example~\ref{example:no_online_RL_alg} shows a negative result in which no policy can achieve online RL, this failure arises because state $i$ is visited only once, regardless of the controller’s policy. Consequently, the controller has no opportunity to learn the correct action at $i$ before the process is absorbed into either $e_1$ or $e_2$. 

In contrast, if an MDP possesses some form of ``irreducibility"--in particular, if all states can be visited infinitely often under some control policy--then achieving online RL becomes feasible. This observation motivates the consideration of a weak form of irreducibility, known as weakly communicating in the classical MDP and RL literature, which serves as a sufficient condition to guarantee the learnability of the optimal policy.

We proceed by first introducing the following notation. For $p\in \cP$ and $\Delta:S\ra\cP(\cA)$, denote
$$p_\Delta(s'|s):= \sum_{a\in A} p(s'|s,a)\Delta(a|s).$$ Also, let $p_{\Delta}^n(s'|s)$ be the $(s,s')$ entry of the $n$-th power of the matrix $\set{p_\Delta(s'|s):s,s'\in S}$. Moreover, for $C\subseteq S$ we denote the complement of $C$ is $S$ by $C^c:= S\setminus C$. 

\begin{definition}[Weak Communication]\label{def:wc}
Consider arbitrary controller and adversary action sets $\cQ$ and $\cP$. A transition kernel $p\in\cP$ is said to be weakly communicating if there is a communicating class $C_p\subseteq S$ s.t. for any $s,s'\in C_p$, there exists $\Delta:S\ra\cQ$ and $N\geq 1$ s.t. $p_\Delta^N(s'|s) > 0$. Moreover, for all $s\in C_p^c$, $s$ is transient under any stationary controller policy. 
   
The ambiguity set $\cP$ is weakly communicating if every $p \in \cP$ is weakly communicating. 

\end{definition}

Weak communication has been a standard assumption in the classical MDP setting that guarantees the optimal average reward is initial state independent \citep{puterman2014MDP}. In particular, if $\cP$ is weakly communicating and $\set{\delta_a:a\in A}\subseteq\cQ$, then for all $p\in \cP$, the classical dynamic programming theory simplifies (cf. \eqref{eqn:classical_amdp_dpp}) to 
\begin{equation}\label{eqn:non_rob_dpp}\sup_{\pi\in\PiH(\cQ)}\overline{\alpha} (\mu,\pi,p) = \sup_{\pi\in\PiS(\cQ)}\overline{\alpha} (\mu,\pi,p)= \alpha_p,
\end{equation}
where $\alpha_p\in[0,1]$ is a part of a solution pair $(\alpha_p,v_p)$ to the constant-gain Bellman equation (cf. \eqref{eqn:multichain_eqn})
\begin{equation}\label{eqn:wc_Bellman_eqn}
  v_p(s) =\max_{a\in A} \set{r(s,a) - \alpha_p + \sum_{s'\in S}p(s'|s,a)v_p(s')},
\end{equation}
for all $s\in S$. Moreover, $\alpha_p$ is unique in the sense that if $(\alpha',v')$ also solves \eqref{eqn:wc_Bellman_eqn}, then $\alpha_p = \alpha'$. But in general, $v_p$ could be non-unique. 

In this setting, since $\alpha_p$ is state-independent, the definition of $\alpha^*(\mu)$ in \eqref{eqn:alpha_star_mu} also simplifies. In particular, we write $\alpha^* = \inf_{p\in\cP}\alpha_p$.

With this definition of weakly communicating $\cP$, we now discuss the existence of policies that achieve online RL within weakly communicating models. There is already a substantial body of work on online RL algorithms for the average-reward setting \citep{jaksch2010near,bartlett2009WC,fruit2018efficient,zhang2023amdp_regret}. However, most of this literature under weak communication has focused on establishing \emph{high-probability} regret bounds rather than expected regret, as defined in Definition~\ref{def:online_RL}. To connect these results with our framework, we show in the following proposition that a policy satisfying the online RL criterion in Definition~\ref{def:online_RL} can be constructed from algorithms that guarantee high-probability regret bounds.

\begin{proposition}\label{prop:hp_RL_policy}
Suppose that $\cP$ is weakly communicating. Assume further that there exist a sequence of policies $\{\pi(T): T \ge 0\} \subseteq   \PiH(\cQ)$ with the following property. There exist parameters $\eta \in (0,1)$, $K \ge 1$, and $\beta > 0$ such that for all $p \in \cP$, reward function $r:S \times A \to [0,1]$, initial distribution $\mu \in \cP(S)$, and $T \ge K$, there exists 
$f_{\mu,r,p}\geq 0$ such that
\[
P_{\mu}^{\pi(T),p} \left( \frac{R_{r,p}(T)}{f_{\mu,r,p} T^\eta} \le 1 \right)
\ge 1 - \frac{1}{T^\beta}.
\]
Then, there exists a policy $\pi \in \PiRL(\cQ,\cP)$.

\end{proposition}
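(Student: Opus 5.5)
The plan is a doubling-trick construction that stitches the fixed-horizon policies $\pi(T)$ into one anytime policy, and then converts the high-probability regret bound into an expected one. Fix epoch lengths $T_k := 2^k K$ for $k\ge 0$. Epoch $k$ occupies the times $[\tau_k,\tau_{k+1})$ with $\tau_k=\sum_{j<k}T_j$. On epoch $k$ we run $\pi(T_k)$ afresh: its clock is reset and it receives as history only the path observed since $\tau_k$, starting from the current state $X_{\tau_k}$. This yields a policy $\pi\in\PiH(\cQ)$, since each decision is a measurable function of the history with values in $\cQ$. Fix $p\in\cP$, $r$ and $\mu$. By weak communication, $\alpha_p$ is a constant, so the regret $R_{r,p}(T)$ does not depend on $X_0$. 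It therefore splits additively over epochs: $R_{r,p}(T)=\sum_k R^{(k)}$, where $R^{(k)}=\sum_{t\in\text{epoch }k}(\alpha_p-r(X_t,A_t))$. The last, possibly incomplete epoch is treated separately below.

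Next we bound the expectation of one complete epoch. Conditionally on $\cH_{\tau_k}$, the process on epoch $k$ is distributed as under $P^{\pi(T_k),p}_{\nu}$ with $\nu=\delta_{X_{\tau_k}}$; this is the strong Markov property at the deterministic time $\tau_k$ together with the reset construction. On the good event the regret is at most $f_{\nu,r,p}T_k^\eta$. On the bad event, which has probability at most $T_k^{-\beta}$, the trivial bound $R^{(k)}\le T_k$ holds, because $\alpha_p\in[0,1]$ and $r\ge 0$. Hence
\[
E\bigl[R^{(k)}\,\big|\,\cH_{\tau_k}\bigr]\le F\,T_k^\eta + T_k^{1-\beta}, \qquad F:=\max_{s\in S} f_{\delta_s,r,p}<\infty,
\]
where finiteness of $F$ uses that $S$ is finite.

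Summing over the at most $\log_2(T/K)+1$ complete epochs gives geometric sums. They are of order $F T^\eta + T^{\max(1-\beta,0)}\log T$, which is $o(T)$ because $\eta<1$ and $\beta>0$.

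The main obstacle is the final partial epoch. It has length $L$ up to roughly $T/2$, and $\pi(T_k)$ carries no guarantee at the intermediate horizon $L<T_k$. Bounding it trivially by $L$ is useless, since $L$ can be comparable to $T$. I would handle this by shrinking the ratio of the partial epoch to the total. Choose epoch lengths growing subgeometrically, for example $T_k=\lceil K(k+1)^2\rceil$. Then $\tau_k\asymp k^3$ and $T_k=o(\tau_k)$, so the last partial epoch contributes at most $T_k=o(T)$ even when bounded trivially. The complete epochs then sum to at most $F\sum_{j\le k}T_j^\eta+\sum_{j\le k}T_j^{1-\beta}$. With $T_j\asymp j^2$ and $T\asymp k^3$ this is $O(k^{2\eta+1})+O(k^{3-2\beta})$ (up to logarithmic factors), and each term is $o(k^3)=o(T)$. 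Hence $\limsup_T T^{-1}E_\mu^{\pi,p}R_{r,p}(T)\le 0$.

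Finally, the limsup is also nonnegative, because by \eqref{eqn:non_rob_dpp} no policy exceeds the average $\alpha_p$. So the limit equals $0$, and $\pi\in\PiRL(\cQ,\cP)$.
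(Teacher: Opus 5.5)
Your proof is correct, but it handles the incomplete final epoch with a different device than the paper.

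\textbf{The paper's route.} The paper keeps the doubling schedule $2^nK$ and never bounds the incomplete epoch directly. From the optimality equation \eqref{eqn:wc_Bellman_eqn}, $r(s,a)-\alpha_p\le v_p^*(s)-\sum_{s'}p(s'|s,a)v_p^*(s')$ for every $(s,a)$. Telescoping then shows that under \emph{any} policy the expected regret over a window $[T,T')$ is at least $-\spnorm{v_p^*}$. Hence $E_\mu^{\pi,p}R_{r,p}(T)\le E_\mu^{\pi,p}R_{r,p}(T')+\spnorm{v_p^*}$, where $T'\le 2T$ is the next epoch boundary, at which the full-epoch guarantee applies.

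\textbf{Your route.} You let the epochs grow polynomially ($T_k\asymp k^2$, $\tau_k\asymp k^3$). The incomplete epoch then has length $O(T^{2/3})$, so the trivial per-step bound of $1$ suffices. This is more elementary: weak communication enters only through the constancy of $\alpha_p$, which lets regret split across epochs, and no bias function is needed.

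\textbf{What each buys.} Your schedule gives up the rate. You certify roughly $E_\mu^{\pi,p}R_{r,p}(T)=O(T^{\max\{2/3,(1+2\eta)/3,1-2\beta/3\}})$ up to logarithms. The paper's construction keeps $O(T^{\max\{\eta,1-\beta\}})+\spnorm{v_p^*}$. This does not matter for the proposition itself, which only needs sublinearity. However, the later remark that $\mrm{TV}_w(\mu,\pi_{\mathrm{ZX}})=0$ for $w(T)=O(T^{-1/2-\epsilon})$ relies on the rate-preserving version.

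Your write-up is more careful than the paper's in two respects. First, you condition on $X_{\tau_k}$ and use $F=\max_{s}f_{\delta_s,r,p}$, which correctly accounts for each epoch starting from a different initial law. The paper applies $f_{\mu,r,p}$ in every epoch, although epoch $n$ starts from the law of $X_{2^nK}$. Second, you check explicitly via \eqref{eqn:non_rob_dpp} that the $\limsup$ is also nonnegative, which the paper leaves implicit.
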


\begin{proof}[Proof of Proposition \ref{prop:hp_RL_policy}]

We construct a policy $\pi\in\PiRL(\cQ,\cP)$ using an \emph{epoch-based} procedure. 
The $n$th epoch spans time steps $t = 2^nK, \ldots, 2^{n+1}K - 1$. 
At the beginning of each epoch, all previously collected data are discarded, and a fresh instance of the online RL algorithm $\pi(2^nK)$ is executed for the duration of that epoch. 
This construction ensures that the regret accumulated across epochs can be bounded in terms of the regret guarantees of $\pi(T)$ provided in the assumption of Proposition \ref{prop:hp_RL_policy}. 

We start by bounding the expected regret within one epoch. We first note that since $\cP$ is weakly communicating, by \eqref{eqn:non_rob_dpp} and \eqref{eqn:wc_Bellman_eqn}, $\alpha_p$ is state independent. By the high-probability bound assumption in Proposition \ref{prop:hp_RL_policy} and the fact that $R_{r,p}(T)\le T$ almost surely, we have
\[
E_{\mu}^{\pi(T),p} R_{r,p}(T)
  \le \Bigl(1-\frac{1}{T^\beta}\Bigr)T^\eta f_{\mu,r,p} + T^{1-\beta}
  \le T^{\max\{\eta, 1-\beta\}}\bigl(f_{\mu,r,p} + 1\bigr),
\]
for any $\mu\in\cP(S)$, $r:S\times A\to[0,1]$, and weakly communicating $p$. 
This inequality bounds the regret of $\pi(T)$ over any horizon of length $T$.

Let $\mu_{2^nK}$ denote the distribution of $X_{2^nK}$ under $E_\mu^{\pi,p}$. 
Since each epoch begins by resetting the history, the expected regret accumulated during the $n$th epoch can be expressed as
\[
\begin{aligned}
E_{\mu}^{\pi,p}\!\left[\sum_{t=2^nK}^{2^{n+1}K-1}\!\!\bigl(\alpha_p - r(X_t,A_t)\bigr)\right]
  &= E_{\mu_{2^nK}}^{\pi(2^nK)}\!\left[\sum_{t=0}^{2^nK-1}\!\!\bigl(\alpha_p - r(X_t,A_t)\bigr)\right] \\
  &\le (f_{\mu,r,p} + 1)(2^nK)^{\max\{\eta, 1-\beta\}} \\
  &\le (f_{\mu,r,p} + 1)2^{n\max\{\eta, 1-\beta\}}K.
\end{aligned}
\]
This bound follows from substituting $T = 2^nK$ into the previous inequality and noting that $\eta < 1$ and $\beta > 0$, giving us the expected regret bound within epoch $n$. 

We first establish the bound at epoch boundaries.  Summing the epoch regret bounds over completed epochs $n=0,\ldots,j-1$, the expected regret at the epoch boundary $T' = 2^jK$ satisfies
\begin{equation}\label{eqn:to_use_total_regret_bd_at_epoch}
E_\mu^{\pi,p} R_{r,p}(T')
  \le \sum_{0\le n\le j-1}(f_{\mu,r,p}+1)2^{n\max\{\eta, 1-\beta\}}K
  \le (f_{\mu,r,p}+1)\frac{2^{j\max\{\eta, 1-\beta\}}-1}{2^{\max\{\eta, 1-\beta\}}-1}K.
\end{equation}

Next, we extend this bound to an arbitrary horizon $T$.  For any $T$ with $2^{j-1}K \le T \le 2^jK$, let $T' = 2^jK$ be the next epoch boundary.  By the average-reward optimality equation \eqref{eqn:wc_Bellman_eqn}, for every action~$a$ and state~$s$,
$r(s,a) - \alpha_p \le v_p^*(s) - \sum_{s'}p(s'|s,a)v_p^*(s')$.
Taking expectations and summing from $t = T$ to $T'-1$, a telescoping argument yields
\[
E_\mu^{\pi,p}\!\left[\sum_{t=T}^{T'-1}\bigl(\alpha_p - r(X_t,A_t)\bigr)\right] \ge -\spnorm{v_p^*}.
\]
Therefore,
\[
E_\mu^{\pi,p} R_{r,p}(T) \le E_\mu^{\pi,p} R_{r,p}(T') + \spnorm{v_p^*}.
\]

Since $\eta>0$ and $1-\beta<1$, define a finite normalization constant
\[
f'_{\mu,r,p} := \frac{4}{2^{\max\{\eta, 1-\beta\}}-1}\bigl(f_{\mu,r,p}+1\bigr) < \infty.
\]
Combining the epoch-boundary bound~\eqref{eqn:to_use_total_regret_bd_at_epoch} with the extension above, for any $2^{j-1}K \le T \le 2^jK$,
\[
\frac{1}{T }E_\mu^{\pi,p} R_{r,p}(T)
  \le f'_{\mu,r,p} 2^{j\max\{\eta-1, -\beta\}} + \frac{\spnorm{v_p^*}}{T},
\]
which converges to $0$ as $j\to\infty$ because $\eta < 1$ and $\beta>0$. This implies that $\pi\in\PiRL(\cQ,\cP)$.
\end{proof}

\begin{remark*}
  The bound in Proposition \ref{prop:hp_RL_policy} can be refined by incorporating logarithmic terms following $T^\eta$. However, since this paper primarily focuses on policy optimality rather than regret analysis, we omit this generalization for the clarity of the current presentation.
\end{remark*}

We provide the following Example \ref{example:RL_alg_ZX} where Proposition \ref{prop:hp_RL_policy} is applied to turn an RL algorithm (viewed as a sequence of history-dependent policies $\set{\pi(T):T\geq 0}$) in \citet{zhang2023amdp_regret} with high-probability regret bound to an online RL policy $\piRL\in\PiRL(\cQ,\cP)$ under Definition \ref{def:online_RL}. 

\begin{example}[The UCB-AVG Algorithm in \citet{zhang2023amdp_regret}]\label{example:RL_alg_ZX}
Suppose $\set{\delta_a:a\in A}\subseteq  \cQ$ and $\cP$ is weakly communicating. Then \citet[Theorem 1]{zhang2023amdp_regret} shows that the UCB-AVG algorithm, identified as $\pi_{\text{ZX}}^\delta\in \PiH(\cQ)$, which only uses deterministic actions, achieves
$$P_\mu^{\pi_{\text{ZX}}^\delta,p}\crbk{R_{r,p}(T) \leq C\spnorm{v_p^*} \sqrt{T}\log\crbk{ \frac{T}{\delta}} }\geq 1-\delta$$ 
for all initial distributions $\mu$ and all $T\geq 1$. Here $C$ is a constant that only depends on $|S|$ and $|A|$, and $v_p^*:S\ra\R$ is one of the solutions to \eqref{eqn:wc_Bellman_eqn}, which depends on $r$ and $p$. 

Choosing $\delta = 1/\sqrt{T}$ and writing $\pi_{\text{ZX}}(T):=\pi_{\text{ZX}}^{1/\sqrt{T}}$, we have that 
$$P_\mu^{\pi_{\text{ZX}}(T),p}\crbk{R_{r,p}(T) \leq 2C\spnorm{v_p^*} \sqrt{T}\log T }\geq 1-\frac{1}{T^{1/2}}.$$ 

Then, we choose $\beta = 1/2$ and $\eta = 1/2+\epsilon$ for any $0 < \epsilon <1/2$. Hence, we see that $\set{\pi_{\text{ZX}}(T):T\geq 0}$ satisfies Proposition \ref{prop:hp_RL_policy}, with $f_{\mu,r,p} = \spnorm{v_p^*} < \infty$ and any $K$ large enough so that $$\frac{2 C\sqrt{K}\log K}{K^{1/2+\epsilon}} \leq 1.$$ 
Hence, by Proposition \ref{prop:hp_RL_policy}, one can construct $\pi_{\mathrm{ZX}}\in\PiRL(\cQ,\cP)$. This shows the following corollary. 

\begin{corollary}\label{cor:piZX_RL}Suppose that $\cP$ is weakly communicating and $\set{\delta_a:a\in A}\subseteq   \cQ$, then $\pi_{\mathrm{ZX}}\in\PiRL(\cQ,\cP)$ which achieves the optimal robust average reward
\[
\underline{\alpha}(\mu,\PiH(\cQ),\cP) = \inf_{p\in\cP}\underline{\alpha}(\mu,\pi_{\mathrm{ZX}},p) = \inf_{p\in\cP}\alpha_p,
\]
where $\alpha_p$ is the unique solution to \eqref{eqn:wc_Bellman_eqn}. 
  
\end{corollary}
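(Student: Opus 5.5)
The plan has two steps: verify that the family $\set{\pi_{\mathrm{ZX}}(T):T\geq 0}$ satisfies the hypotheses of Proposition~\ref{prop:hp_RL_policy}, which gives $\pi_{\mathrm{ZX}}\in\PiRL(\cQ,\cP)$, and then apply Theorem~\ref{thm:value_RL_policy} together with the weak-communication simplification \eqref{eqn:non_rob_dpp} to identify the value.

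For the first step, fix $p\in\cP$, $r$, and $\mu$. Use the high-probability bound of \citet[Theorem 1]{zhang2023amdp_regret} with $\delta=T^{-1/2}$, so that $\log(T/\delta)=\tfrac32\log T\le 2\log T$. This holds for every weakly communicating $p$ and every initial distribution, and it uses only deterministic actions, so $\pi_{\mathrm{ZX}}(T)\in\PiH(\cQ)$ because $\set{\delta_a}\subseteq\cQ$. It gives the event $R_{r,p}(T)\le 2C\spnorm{v_p^*}\sqrt T\log T$ with probability at least $1-T^{-1/2}$. Take $\beta=1/2$, $\eta=1/2+\epsilon$, and $f_{\mu,r,p}=\spnorm{v_p^*}$.

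The one point needing care is absorbing the logarithm. We need $2C\sqrt T\log T\le T^{1/2+\epsilon}$ for all $T\ge K$, and $K$ must not depend on $p$, $r$, or $\mu$. This holds because $C$ depends only on $|S|$ and $|A|$, and because $\log T/T^{\epsilon}$ is eventually decreasing. So we pick $K$ beyond that monotonicity threshold and large enough that $2C\log K\le K^{\epsilon}$. There is also the degenerate case $f=0$. If $\spnorm{v_p^*}=0$, we instead take $f_{\mu,r,p}=\spnorm{v_p^*}+1$, or simply note that the proof of Proposition~\ref{prop:hp_RL_policy} only uses the in-expectation consequence, which holds trivially in that case. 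Proposition~\ref{prop:hp_RL_policy} then produces the epoch-restarted policy $\pi_{\mathrm{ZX}}\in\PiRL(\cQ,\cP)$.

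For the second step, Theorem~\ref{thm:value_RL_policy} gives
\[
\underline\alpha(\mu,\PiH(\cQ),\cP)=\inf_{p\in\cP}\underline\alpha(\mu,\pi_{\mathrm{ZX}},p)=\alpha^*(\mu)=\inf_{p\in\cP}\sum_s\mu(s)\alpha_p(s).
\]
Under weak communication, \eqref{eqn:non_rob_dpp} and \eqref{eqn:wc_Bellman_eqn} show that $\alpha_p(\cdot)$ is constant and equal to the unique gain $\alpha_p$. Hence $\sum_s\mu(s)\alpha_p(s)=\alpha_p$, and the value is $\inf_{p\in\cP}\alpha_p$.

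The main obstacle is the uniformity of $K$ over $p\in\cP$, which the appeal to the constant $C$ above handles. Everything else is direct invocation of earlier results.
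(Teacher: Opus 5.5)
Your proposal is correct and follows the paper's own route. You verify the hypotheses of Proposition~\ref{prop:hp_RL_policy} for UCB-AVG with $\delta=T^{-1/2}$, $\beta=1/2$, $\eta=1/2+\epsilon$ and $f_{\mu,r,p}=\spnorm{v_p^*}$, then apply Theorem~\ref{thm:value_RL_policy} and use that the gain is constant under weak communication. Your extra care on two points tightens details the paper leaves implicit: you choose $K$ beyond the threshold where $\log T/T^{\epsilon}$ starts decreasing, so the bound holds for every $T\ge K$ and not just at $T=K$, and you pad $f$ to avoid the degenerate case $f=0$.
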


\end{example}

\section{Transient Values for Optimal Robust Policies}

By Theorem \ref{thm:value_RL_policy}, if $\PiRL(\cQ,\cP) \neq \varnothing$, then starting from any initial distribution $\mu\in\cP(\cS)$, every optimal policy under the liminf average-reward criterion achieves a long-run liminf average reward of $\alpha^*(\mu)$. In practice, even when a robust MDP is formulated under the long-run average criterion, good transient performance--where the system attains high expected stage-wise rewards while converging to its steady-state behavior--is highly desirable. In this section, we investigate this aspect of optimal robust policies by defining their transient values and constructing efficient robust optimal policies that enjoy (perhaps surprisingly) strong transient performance guarantees.

\subsection{Transient Values and Their Upper Bounds}
We begin by defining the transient value of a history-dependent policy $\pi\in\PiH(\cQ)$ against the optimal average reward $\alpha^*(\mu)$ defined in \eqref{eqn:alpha_star_mu}.

\begin{definition}[Transient Value]\label{def:TV}
Consider a weight function $w:\N\ra(0,1]$ that is non-increasing. Given an initial distribution $\mu\in\cP(\cS)$, $\pi\in\PiH(\cQ)$, we define the $w$-weighted transient value against $\alpha^*(\mu)$
\begin{equation}\label{eqn:weighted_TV}
  \mrm{TV}_w(\mu,\pi):= \inf_{p\in\cP} \liminf_{T\ra\infty}w(T)E_\mu^{\pi,p} \sum_{t=0}^{T-1}[r(X_t,A_t) -\alpha^*(\mu)].
\end{equation}
When $w(\cd)\equiv 1$, let $\mrm{TV}(\mu,\pi)$ denote the (unweighted) transient value. 
\end{definition}

The transient value captures the long-run cumulative deviation of the expected rewards under $(\mu,\pi)$ from the optimal average reward. First, we show in Proposition \ref{prop:TL_upper_bd} that history-dependent policies cannot attain an infinite unweighted transient value; that is, $\mrm{TV}(\mu,\pi) < \infty$. This, in turn, implies that $\mrm{TV}_w(\mu,\pi) \leq 0$ for any weighting function $w(T)\to 0$. However, in general, history-dependent optimal policies can exhibit arbitrarily poor transient performance.

Before we state Proposition \ref{prop:TL_upper_bd}, we introduce the following technical assumption. 
\begin{assumption}\label{assump:existence_of_pstar}
Given $\mu\in\cP(\cS)$, assume that there exists $p\in \cP$ such that $\alpha^*(\mu) = \sum_{s\in S}\mu(s)\alpha_{p}(s)$. In this case, we define $\cP^*_\mu:=\set{p\in\cP:\sum_{s\in S}\mu(s)\alpha_p(s) = \alpha^*(\mu)}$.  
\end{assumption}

We note that Assumption~\ref{assump:existence_of_pstar} serves as the minimal condition that makes the definition \eqref{eqn:weighted_TV} meaningful when $w(T)$ is not $O(1/T)$. Indeed, if there is no $p\in\cP$ such that $\alpha^*(\mu)=\sum_{s\in S}\mu(s)\alpha_p(s)$, then for any $\piRL\in\PiRL$,
$$
E_\mu^{\piRL,p}\sum_{t=0}^{T-1}[r(X_t,A_t)-\alpha^*(\mu)]
=
E_\mu^{\piRL,p}\sum_{t=0}^{T-1}[r(X_t,A_t)-\alpha_p(X_0)]
+
T\crbk{\sum_{s\in S}\mu(s)\alpha_p(s)-\alpha^*(\mu)}
=
\Omega(T)
$$
as $T\ra\infty$.

On the other hand, even if $\cP$ is compact and convex, a worst-case kernel need not exist \citep[Proposition~3.2]{grand2023beyond}. However, existence can be guaranteed when $\cP$ is weakly communicating. We summarize this fact in the following lemma as a side result. To streamline the discussion of transient values, we defer the proof of Lemma~\ref{lemma:exist_worst_case_kernel_WC} to Appendix~\ref{sec:proof:lemma:exist_worst_case_kernel_WC}.

\begin{lemma}[Existence of Worst-Case Kernels]\label{lemma:exist_worst_case_kernel_WC}
    Assume that $\cP$ is compact and weakly communicating. Then Assumption \ref{assump:existence_of_pstar} holds. 
\end{lemma}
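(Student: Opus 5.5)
The plan is to show that, under weak communication, the map $p\mapsto\alpha_p$ (which is a constant by \eqref{eqn:wc_Bellman_eqn}) is lower semicontinuous on $\cP$. Since $\cP$ is compact, the infimum $\alpha^*=\inf_{p\in\cP}\alpha_p$ is then attained at some $p^*\in\cP$. Because $\alpha_p$ does not depend on the state, $\sum_s\mu(s)\alpha_{p^*}(s)=\alpha^*(\mu)$ for every $\mu$, so Assumption~\ref{assump:existence_of_pstar} holds simultaneously for all $\mu$ with $\cP^*_\mu=\arg\min_{p\in\cP}\alpha_p$.

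For lower semicontinuity I will use the linear-programming (occupation-measure) representation of the optimal gain. For a weakly communicating $p$,
\[
\alpha_p=\max\Big\{\textstyle\sum_{s,a}r(s,a)x(s,a):\ x\in K(p)\Big\},
\]
where
\[
K(p):=\Big\{x\in\cP(S\times A):\ \textstyle\sum_a x(s',a)=\sum_{s,a}p(s'|s,a)x(s,a)\ \forall s'\Big\}.
\]
This follows from \eqref{eqn:non_rob_dpp}: the stationary distributions of deterministic stationary policies are exactly the extreme points of $K(p)$, and with constant gain the LP value equals $\alpha_p$. Fix a sequence $p_k\to p$ in $\cP$ and an optimal $x\in K(p)$, taken to be the occupation measure of an optimal deterministic $\Delta$ on one of its recurrent classes $R\subseteq C_p$. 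By Berge's maximum theorem, lower semicontinuity of the LP value reduces to lower hemicontinuity of $K(\cdot)$ at $p$: I must produce $x_k\in K(p_k)$ with $\sum r\,x_k\ge\sum r\,x-o(1)$.

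The main obstacle is this step. The correspondence $K(\cdot)$ is only upper hemicontinuous in general, and the Cesàro stationary distribution can jump when $p_k$ leaks mass out of $R$ that $p$ does not. My first attempt will be a uniform approximation through a finite horizon. For each $n$, the $n$-step optimal value $V_n(p)$ is continuous in $p$, being a finite composition of max and sum operations. Moreover, under $p_k$, playing $\Delta$ for $n$ steps from a state of $R$ earns at least $V_n^{\Delta}(p)-n\cdot o_k(1)$, since the $n$-step laws converge. I then want to pass from these $n$-block rewards to $\alpha_{p_k}$ through the inequality $\alpha_{p_k}\ge \frac1n \min_{s}V^{\Delta}_n(p_k)(s)$. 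The difficulty is the dependence on the starting state. To control it I will use weak communication of $p_k$: from any state, the controller can reach $R$ (or an equally good region) while losing only a bounded amount, and repeating the block strategy gives a history-dependent policy whose average reward is at least $\frac1n V^\Delta_n(p)-o_k(1)-c/n$. By \eqref{eqn:non_rob_dpp}, $\alpha_{p_k}$ dominates the average reward of this policy.

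If the reachability cost is not uniform in $k$, I will instead exploit that $C_{p_k}$ can only contain $R$-reachable structure inherited from $p$ for large $k$, because the supports of $p_k(\cdot|s,a)$ eventually contain those of $p$. Sending $k\to\infty$ and then $n\to\infty$ gives $\liminf_k\alpha_{p_k}\ge\alpha_p$, which is the required lower semicontinuity and completes the argument.
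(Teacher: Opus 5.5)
Your reduction is sound: lower semicontinuity of $p\mapsto\alpha_p$ plus compactness suffices, and $\cP^*_\mu$ does not depend on $\mu$. The two ingredients $\alpha_{p_k}\ge\frac1n\min_{s\in S}V_n^{\Delta}(p_k)(s)$ and continuity of $p\mapsto V_n^\Delta(p)$ for fixed $n$ are also fine.

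The gap is the step you flag yourself and never close. You pick $\Delta$ only through its occupation measure on one recurrent class $R$, so you control $V_n^\Delta(p)$ only from states of $R$. You then try to handle two things by a navigation cost computed under $p_k$: the other starting states, and the re-entries after the $p_k$-chain leaks out of $R$. Such leakage happens with probability $o_k(1)$ in every block, not just once. Since you let $k\to\infty$ at fixed $n$, that cost must be bounded uniformly in $k$. Weak communication of each $p_k$ only gives a finite, $k$-dependent cost. Eventual inclusion of supports only gives qualitative reachability. A uniform bound needs lower bounds on path probabilities plus a geometric-trials argument, which is exactly the quantitative content of \eqref{eq:enter_Cp_uniform_bound}--\eqref{eq:hit_state_uniform_bound} in the paper. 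As written this step is asserted, not proved, so the argument is incomplete. (Minor points: the LP/Berge framing is unnecessary. Also, extreme points of $K(p)$ correspond to deterministic policies restricted to a single recurrent class, not to all stationary distributions of deterministic policies.)

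The obstacle disappears if you put all the uniformity at the limit kernel $p$ rather than at $p_k$. Take $\Delta$ greedy in \eqref{eqn:wc_Bellman_eqn}, so that $v_p(s)=r(s,\Delta(s))-\alpha_p+\sum_{s'}p(s'|s,\Delta(s))v_p(s')$ for every $s\in S$. Telescoping gives $V_n^\Delta(p)(s)=n\alpha_p+v_p(s)-E_s^{\Delta,p}v_p(X_n)\ge n\alpha_p-\spnorm{v_p}$ from every state. Your block bound then yields $\liminf_k\alpha_{p_k}\ge\alpha_p-\spnorm{v_p}/n$ for each $n$, with no reachability analysis under $p_k$. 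Quantitatively, since $\abs{\sum_{s'}(q-p)(s'|s,a)v_p(s')}\le d(p,q)\spnorm{v_p}$, the same telescoping under $(q,\Delta)$ gives $\alpha_q\ge\alpha_p-d(p,q)\spnorm{v_p}$. Alternatively, take $\Delta$ from Lemma~\ref{lemma:wc_unichain_pi}. Then any stationary distribution of $q_\Delta$ converges, as $q\to p$, to the unique one of the unichain $p_\Delta$, which is exactly the lower hemicontinuity you wanted.

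This is where the repaired route differs from the paper's. The paper gets $\abs{\alpha_q-\alpha_p}\le d(p,q)\max\set{\spnorm{v_p},\spnorm{v_q}}$ from the comparison theorem. It then has to bound $\spnorm{v_q}$ uniformly near $p$, and it needs that bound precisely for the lower direction. Witnessing that direction with one fixed stationary policy, controlled under $p$, needs only the constant $\spnorm{v_p}$. Combined with the paper's easy direction $\alpha_q\le\alpha_p+d(p,q)\spnorm{v_p}$, it even gives $\abs{\alpha_q-\alpha_p}\le d(p,q)\spnorm{v_p}$.
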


With a well-defined finite transient value under Assumption \ref{assump:existence_of_pstar}, we establish some important properties.

\begin{proposition}[Properties of Transient Values]\label{prop:TL_upper_bd}
  Fix $\mu\in\cP(\cS)$ and suppose that Assumption \ref{assump:existence_of_pstar} is in force. Let 
  $$V^*:=\set{v\in\set{S\ra\R}:(\alpha_{p^*},v) \text{ solves \eqref{eqn:multichain_eqn} with $p = p^*$ for some } p^*\in\cP^*_\mu}.$$ Then, for any history-dependent policy $\pi$,
  \begin{equation}\label{eqn:TV_ub}\mrm{TV}(\mu,\pi) \leq \inf_{v\in V^*}\spnorm{v}.
  \end{equation}
  In particular, $\mrm{TV}_w(\mu,\pi)\leq 0$ for any weight function such that $\lim_{T\ra\infty}w(T) = 0$. 

  In contrast, the transient values of optimal policies are, in general, not lower bounded. Specifically, there exists an MDP instance that satisfies the following. For any weight function $w$ such that $Tw(T)\ra \infty$ as $T\ra\infty$, there exists a policy $\pi_w$ that is optimal in both the liminf and limsup average-reward criteria with $\mrm{TV}_w(\mu,\pi_w) = -\infty$. 
\end{proposition}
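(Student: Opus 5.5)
For the upper bound \eqref{eqn:TV_ub}, I will fix any $p^*\in\cP^*_\mu$ and any $v$ with $(\alpha_{p^*},v)$ solving \eqref{eqn:multichain_eqn} for $p=p^*$. Since $\mrm{TV}(\mu,\pi)$ is an infimum over $p\in\cP$, it suffices to bound the $\liminf$ at $p=p^*$. The second equation of \eqref{eqn:multichain_eqn} gives, for every $(s,a)$,
\[
r(s,a)-\alpha_{p^*}(s)\le v(s)-\sum_{s'}p^*(s'|s,a)v(s').
\]
I will evaluate this at $(X_t,A_t)$, take $E_\mu^{\pi,p^*}$, and use the Markov property of $p^*$, so that the right-hand side becomes $E[v(X_t)-v(X_{t+1})]$. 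Summing over $t=0,\ldots,T-1$ then telescopes to
\[
E\sum_{t<T}[r(X_t,A_t)-\alpha_{p^*}(X_t)]\le E[v(X_0)-v(X_T)]\le\spnorm{v}.
\]

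It remains to replace $\alpha_{p^*}(X_t)$ by $\alpha^*(\mu)=\sum_s\mu(s)\alpha_{p^*}(s)$. I will use the first equation of \eqref{eqn:multichain_eqn}, $\alpha_{p^*}(s)\ge\sum_{s'}p^*(s'|s,a)\alpha_{p^*}(s')$ for all $a$, which says that $\alpha_{p^*}(X_t)$ is a supermartingale under any policy. Hence $E\alpha_{p^*}(X_t)\le E\alpha_{p^*}(X_0)=\alpha^*(\mu)$, and therefore $E\sum_{t<T}[r-\alpha^*(\mu)]\le E\sum_{t<T}[r-\alpha_{p^*}(X_t)]\le\spnorm{v}$. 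Taking the $\liminf$ in $T$ and then the infimum over $v\in V^*$ gives \eqref{eqn:TV_ub}. For the weighted statement, if $\mrm{TV}<\infty$ then multiplying the bound by $w(T)\to0$ makes the $\liminf$ at $p^*$ at most $0$, so $\mrm{TV}_w\le 0$.

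For the counterexample, I will take a single kernel $\cP=\{p\}$. The MDP has an absorbing good state $g$ with reward $1$, reachable from a start state $s$ (reward $0$) via action $a_1$; action $a_2$ keeps the chain at $s$, and it should be weakly communicating or at least have $\alpha^*=1$. Given $w$ with $Tw(T)\to\infty$, the idea is that $Tw(T)$ grows unboundedly, so a policy that wastes $o(T)$ but large amounts of reward is still average-optimal yet has $w$-weighted deficit tending to infinity. The simplest version: choose a delay $D(T)\to\infty$ with $D(T)=o(T)$ and $w(T)D(T)\to\infty$. Since $w$ is non-increasing, a fixed delay can be made to work by taking $D$ large, but a fixed delay is not enough in the limit, so the delay must grow.

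Deterministic absorption conflicts with a growing delay, because the controller cannot wait a random amount of time once absorbed. I will therefore make $g$ non-absorbing with a reset action, so that leaving $g$ is possible, e.g.\ a communicating two-state chain. The policy then operates in epochs: in epoch $k$ it spends $m_k$ steps at $s$ (deficit $m_k$) and then $L_k$ steps at $g$. I will choose $m_k=o(L_k)$, which keeps the average reward $1$ for both $\liminf$ and $\limsup$ and hence gives optimality. At the same time I will choose $m_k$ large enough relative to $w$, for instance at the epoch-start time $T_k$ requiring $w(T_k+m_k)\,m_k\ge k$ together with $m_k\le T_k/k$. Such a choice exists because $Tw(T)\to\infty$: take $m_k=\lfloor T_k/k\rfloor$, so that $w(T_k+m_k)m_k\ge w(2T_k)T_k/k$; letting $T_k$ grow fast enough that $w(2T_k)T_k\ge k^2$ gives $k$. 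The cumulative deficit at $T=T_k+m_k$ is then at least $m_k$, so $w(T)E\sum_{t<T}[r-1]\le -k$ along a subsequence, and the $\liminf$ equals $-\infty$. Tuning these epoch lengths against an arbitrary $w$ is the main obstacle; everything else is routine.
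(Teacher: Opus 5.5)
Your proposal is correct. The upper bound follows the paper's argument: you take the two inequalities from the modified multichain equation at $p^*\in\cP^*_\mu$, telescope $v$ along the trajectory, and use the supermartingale property of $\alpha_{p^*}(X_t)$ to replace $\alpha_{p^*}(X_t)$ by $\alpha^*(\mu)$. For $\mrm{TV}_w\le 0$ you use that the bound at $p^*$ holds for every $T$, so $w(T)g(T)\le w(T)\spnorm{v}\to 0$. This is slightly more direct than the paper's Lemma~\ref{lemma:liminf_of_weighted_seq}, which needs only $\liminf_T g(T)<\infty$. Your phrase ``if $\mrm{TV}<\infty$'' should really read ``since the bound at $p^*$ is uniform in $T$''. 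Finiteness of an infimum over $p$ of a $\liminf$ is not what drives that step.

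For the counterexample, the paper uses the same epoch idea on a simpler instance: one state and two actions with rewards $0$ and $1$. There, wasting reward just means playing the bad action, so your absorption/reset issue never arises. Your two-state communicating chain with a reset action works, but the state structure plays no role; only the ability to incur a unit deficit at any time matters.

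The bookkeeping also differs slightly. The paper places the $B_{n+1}-B_n$ bad actions at the end of each block $[T_n,T_{n+1})$, with $B_n=\lfloor T_n/n\rfloor$, $T_{n+1}\ge 2T_n$ and $T_nw(T_n)\ge n^2+n$. It then reads off $w(T_n)B_n\ge n$ directly at block boundaries. You place the bad phase at the start of each epoch and evaluate at $T_k+m_k$, which requires the monotonicity of $w$ to pass to $w(2T_k)$; that is fine, since weight functions are non-increasing by definition. Your optimality check does need both conditions you impose: $m_k=o(L_k)$ controls the completed epochs, and $m_k\le T_k/k$ controls the current one. The per-epoch transition overheads add only $O(k)=o(T_k)$. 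The floor costs you $1$, giving $k-1$ rather than $k$, which is harmless.
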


\begin{proof}[Proof of Proposition \ref{prop:TL_upper_bd}]
We first prove the upper bound \eqref{eqn:TV_ub}. Note that
$$\begin{aligned}
\mrm{TV}(\mu,\pi)&\leq \sup_{\pi\in\PiH(\cQ)}\inf_{p\in\cP}\liminf_{T\ra\infty} E_\mu^{\pi,p}\sum_{t=0}^{T-1}\sqbk{r(X_t,A_t) - \alpha^*(\mu)}\\
&\leq \inf_{p\in\cP}\sup_{\pi\in\PiH(\cQ)}\liminf_{T\ra\infty} E_\mu^{\pi,p}\sum_{t=0}^{T-1}\sqbk{r(X_t,A_t) - \alpha^*(\mu)}\\
&\leq\inf_{p^*\in\cP^*_\mu} \sup_{\pi\in\PiH(\cQ)}\liminf_{T\ra\infty} E_\mu^{\pi,p^*}\sum_{t=0}^{T-1}\sqbk{r(X_t,A_t) - \alpha^*(\mu)} 
\end{aligned}$$
By Assumption \ref{assump:existence_of_pstar}, we have that for all ${p^*\in\cP^*_\mu}$,  $\alpha^*(\mu)=\sum_{s\in S}\mu(s)\alpha_{p^*}(s)=E_\mu[\alpha_{p^*}(X_0)]$, where $X_0\sim \mu$.  Therefore, we further have 
$$
\mrm{TV}(\mu,\pi)  \leq \inf_{p^*\in\cP^*_\mu}\sup_{\pi\in\PiH(\cQ)}\liminf_{T\ra\infty} E_\mu^{\pi,p^*}\sum_{t=0}^{T-1}\sqbk{r(X_t,A_t) - \alpha_{p^*}(X_0)}.
$$

For any fixed $p^*\in\cP_\mu^*$, let $V_{p^*} = \set{v\in\set{S\ra\R}:(\alpha_{p^*},v) \text{ solves \eqref{eqn:multichain_eqn} with $p = p^*$}}$. Notice that $$\inf_{v\in V^*}\spnorm{v} = \inf_{p^*\in\cP^*_\mu} \inf_{v\in V_{p^*}}\spnorm{v}.$$
Therefore, to conclude \eqref{eqn:TV_ub}, it suffices to show that for any $\pi\in\PiH(\cQ)$, 
\begin{equation}\label{eqn:to_show_TV_ub}
\liminf_{T\ra\infty} E_\mu^{\pi,p^*}\sum_{t=0}^{T-1}\sqbk{r(X_t,A_t) - \alpha_{p^*}(X_0)}\leq \inf_{v\in V_{p^*}}\spnorm{v}.  
\end{equation}

To this end, we recall the multichain Bellman equation \eqref{eqn:multichain_eqn}. In particular, with $p$ replaced by $p^*$ and $(\alpha_{p^*},v_{p^*})$ as a solution pair, we have that 
$$\begin{aligned}\alpha_{p^*}(s)&\geq \sum_{s'\in S}p^*(s'|s,a)\alpha_{p^*}(s'),\\
v_{p^*}(s)&\geq r(s,a) - \alpha_{p^*}(s) + \sum_{s'\in S}p^*(s'|s,a)v_{p^*}(s')
\end{aligned}$$
for all $a\in A$. In particular, we have that for any $\pi\in\PiH(\cQ)$,
$$\begin{aligned}&E_{\mu}^{\pi,p^*}[\alpha_{p^*}(X_{t+1}) - \alpha_{p^*}(X_{t})|\cH_t]\leq 0,\\
&  E_{\mu}^{\pi,p^*}[r(X_t,A_t) - \alpha_{p^*}(X_{t}) +v_{p^*}(X_{t+1}) - v_{p^*}(X_t)|\cH_t]\leq 0. 
\end{aligned}$$

Summing over $t=0,\ds ,T-1$ and take expectation, we have that
\begin{equation}\label{eqn:to_use_ub_E_alpha_pstar} 0\geq E_{\mu}^{\pi,p^*}\sqbk{\sum_{t=0}^{T-1}\alpha_{p^*}(X_{t+1}) - \alpha_{p^*}(X_{t})} = E_{\mu}^{\pi,p^*}\alpha_{p^*}(X_{T}) -E_{\mu}^{\pi,p^*}\alpha_{p^*}(X_{0}), 
\end{equation}
and hence
$$\begin{aligned}
0&\geq E_{\mu}^{\pi,p^*}\sqbk{\sum_{t=0}^{T-1}r(X_t,A_t) - \alpha_{p^*}(X_{t}) +v_{p^*}(X_{t+1}) - v_{p^*}(X_t)}\\
& =E_{\mu}^{\pi,p^*}\sqbk{\sum_{t=0}^{T-1}r(X_t,A_t) - \alpha_{p^*}(X_{t}) }+E_{\mu}^{\pi,p^*}\sqbk{v_{p^*}(X_{T}) - v_{p^*}(X_0)}\\
&\geq E_{\mu}^{\pi,p^*}\sqbk{\sum_{t=0}^{T-1}(r(X_t,A_t) - \alpha_{p^*}(X_{0}) )}- \spnorm{v_{p^*}}
\end{aligned}$$
where the last inequality follows from \eqref{eqn:to_use_ub_E_alpha_pstar}, implying that $-E_{\mu}^{\pi,p^*}\alpha_{p^*}(X_{t}) \geq -E_{\mu}^{\pi,p^*}\alpha_{p^*}(X_{0})$ for all $t\geq 0$, together with $v_{p^*}(X_T) - v_{p^*}(X_0) \geq -\spnorm{v_{p^*}}$. Since $(\alpha_{p^*},v_{p^*})$ is an arbitrary solution pair to \eqref{eqn:multichain_eqn} and $\alpha_{p^*}$ is unique, 
\begin{equation}\label{eqn:to_use_TV_bd_using_pstar}E_{\mu}^{\pi,p^*}\sum_{t=0}^{T-1}\sqbk{r(X_t,A_t) - \alpha_{p^*}(X_{0}) } \leq \inf_{v\in V_{p^*}} \spnorm{v}.
\end{equation}
This implies \eqref{eqn:to_show_TV_ub} and hence \eqref{eqn:TV_ub}.

Next, we show that $\mrm{TV}_w(\mu,\pi)\leq 0$ for any weight function satisfying $\lim_{T\ra\infty}w(T) = 0$. 
The argument relies on the following elementary lemma, whose proof is deferred to Appendix \ref{sec:proof:lemma:liminf_of_weighted_seq}. 

\begin{lemma}\label{lemma:liminf_of_weighted_seq}
  Let $\set{g(T):{T\ge 1}}\subseteq   \mathbb{R}$ and $w:\N\ra (0,1]$ with $w(T)\to 0$ as $T\to\infty$. 
  Assume that $\liminf_{T\to\infty} g(T) \le C$ for some $C < \infty$. 
  Then, $\liminf_{T\to\infty} w(T) g(T)\le 0.$
\end{lemma}

To apply Lemma \ref{lemma:liminf_of_weighted_seq}, fix $p\in\cP$ and define 
$$g(\mu,\pi,p,T) := E_\mu^{\pi,p}\sum_{t=0}^{T-1}\sqbk{r(X_t,A_t) - \alpha^*(\mu)}.$$
By \eqref{eqn:to_use_TV_bd_using_pstar}, we have $\liminf_{T\ra\infty}g(\mu,\pi,p^*,T) < \infty$. 
Then, for any $w:\N\ra (0,1]$ with $\lim_{T\ra\infty}w(T) = 0$, applying Lemma \ref{lemma:liminf_of_weighted_seq} yields
$$\mrm{TV}_w(\mu,\pi) = \inf_{p\in\cP}\liminf_{T\to\infty} w(T) g(\mu,\pi,p,T)
\le \liminf_{T\to\infty} w(T) g(\mu,\pi,p^*,T) \leq 0.$$
This shows that the weighted transient value is always non-positive whenever the weighting function $w(T)$ vanishes as $T\to\infty$.

To show the final statement, we construct a simple robust MDP instance with $S = \set{s}$ and $A = \set{a_0,a_1}$. In this setting, there is only one possible transition kernel $p$, defined by $p(s|s,a_i) = 1$ for $i=0,1$. Therefore, the only possible ambiguity set is $\cP = \set{p}$. We let $\cQ = \cP(\cA)$ or $\set{\delta_{a_0},\delta_{a_1}}$, and consider a reward function $r(s,a_i) = r(a_i) = i$ for $i=0,1$. It is immediate that the optimal average reward is $1$, achieved by the policy that always plays action $a_1$.

For any fixed weight function $w$, we construct an optimal deterministic Markovian but non-stationary policy $\pi_w = (\pi_{w,0},\pi_{w,1},\pi_{w,2},\dots)$, where $\pi_{w,t}(a_i|s)\in\set{0,1}$, and show that $\mrm{TV}_w(\mu,\pi_w) = -\infty$.

To achieve this, consider a sequence of time points $\set{T_n:n\geq 1}\subseteq  \N$ satisfying $T_1=1$,
\begin{equation}\label{eqn:to_use_property_of_Tn}
T_{n+1}\ge 2T_n,\quad\text{and}\quad T_{n}w(T_n)\geq n^{2} + n.
\end{equation}
Such a sequence exists because $Tw(T)$ can be made arbitrarily large as $T$ increases. 
Next, define a sequence $\set{B_n\in\N:n\ge 1}$, to be specified later, satisfying $0\leq B_{n+1}-B_n\leq T_{n+1}-T_n$. Intuitively, $B_n$ represents the number of “bad” actions played before time $T_n$.

Specifically, we define the policy as follows. For $t = T_n,\dots,T_{n+1}-(B_{n+1}-B_n)-1$, the policy plays the good action, i.e., $\pi_{w,t}(a_1|s) = 1$. For the remaining time steps $t = T_{n+1}-(B_{n+1}-B_n),\dots,T_{n+1}-1$, the policy plays the bad action, i.e., $\pi_{w,t}(a_0|s) = 1$.

For $T_n \leq T \leq T_{n+1}-1$, since $\pi_w$ first plays the good action and then the bad action, the fraction of bad actions played satisfies
$$
0\leq 1- \frac{1}{T}E_\mu^{\pi,p}\sum_{t=0}^{T-1}r(X_t,A_t) \leq \max\set{\frac{B_{n}}{T_{n}},\frac{B_{n+1}}{T_{n+1}}}.
$$
Therefore, if $T_n^{-1}B_n\to 0$, then $\pi_w$ is an optimal policy under both the limsup and liminf average-reward criteria. On the other hand,
$$
w(T_n)E_\mu^{\pi,p}\sum_{t=0}^{T_n-1}[r(X_t,A_t)-1] = -w(T_n)B_n,
$$
so $\mrm{TV}_w(\mu,\pi_w) = -\infty$ if $\pi_w$ is optimal and $w(T_n)B_n\to\infty$.

Hence, to finish the proof, it suffices to find $B_n$ satisfying $B_{n+1}-B_n \leq T_{n+1}-T_n$, $T_n^{-1}B_n\to 0$, and $w(T_n)B_n\to\infty$. 
Let $B_n := \lfloor T_n/n \rfloor$. Then,
$$
B_{n+1} - B_n \leq \floor{\frac{T_{n+1}}{n+1}} \leq \floor{\frac{T_{n+1}}{2}}  \leq T_{n+1}-T_n,
$$
where the last inequality follows from \eqref{eqn:to_use_property_of_Tn}. Moreover, by the same conditions,
$$
\frac{B_n}{T_n}\leq \frac{1}{n}\to 0,
\quad\text{and}\quad
w(T_n)B_n \geq \frac{T_nw(T_n)}{n}-1 \geq n\to\infty.
$$
This completes the proof.
\end{proof}

From Proposition \ref{prop:TL_upper_bd}, if $\mrm{TV}_w(\mu,\pi) > -\infty$, then $1/w(T)$ can be interpreted as the growth rate of the deviation between the cumulative reward up to time $T$ and $T$ times the optimal average reward. In particular, when $w(T) \geq \Omega(1/T)$ (for example, $w(T) = 1/\sqrt{T}$), the condition $\mrm{TV}_w(\mu,\pi) > -\infty$ provides a stronger form of optimality certificate than the long-run average criterion, formally suggesting that
$$\inf_{p\in\cP}\frac{1}{T} E_\mu^{\pi,p}\sum_{t=0}^{T-1}r(X_t,A_t) = \alpha^*(\mu) - O\crbk{\cfrac{1}{Tw(T)}}.$$

On the other hand, when $w(T) = \frac{1}{T}$, we have
$$\begin{aligned} 
\mrm{TV}_w(\mu,\pi) &= \inf_{p\in\cP}\liminf_{T\ra\infty} \sqbk{\frac{1}{T}E_\mu^{\pi,p}\sum_{t=0}^{T-1}r(X_t,A_t) }- \alpha^*(\mu)\\
&=\inf_{p\in\cP}\underline{\alpha}(\mu,\pi,p) -\alpha^*(\mu).
\end{aligned}$$ Hence, when $\PiRL(\cQ,\cP) \neq \varnothing$, by Theorem \ref{thm:value_RL_policy}, any $\piRL\in \PiRL(\cQ,\cP)$ satisfies $\mrm{TV}_w(\mu,\piRL) = 0$ for all $\mu\in\cP(\cS)$. 
In contrast, any suboptimal policy $\pi$ under the liminf average-reward criterion satisfies $-1 \leq \mrm{TV}_w(\mu,\pi) < 0$. 
Therefore, suboptimal policies have negative transient values, with magnitude growing at rate $O(1/w(T)) = O(T)$.

\subsection{Transient Value Lower Bounds for RL Policies}

We have shown that finite lower bounds on $\mrm{TV}_w$ translate into transient performance guarantees for an optimal policy. We now derive such lower bounds for RL policies $\piRL\in\PiRL(\cQ,\cP)$ by leveraging expected regret upper bounds. Recall that, by Definition \ref{def:online_RL}, any $\piRL\in\PiRL(\cQ,\cP)$ satisfies $E_{\mu}^{\piRL,p}R_{r,p}(T) = o(T)$. The proposition below quantifies how sublinear regret rates yield a corresponding lower bound on $\mrm{TV}_w$.

\begin{proposition}[Transient Value Lower Bound for RL Policies]\label{prop:TV_lower_bd_RL_policy}
Fix $\mu\in\cP(\cS)$ and $\piRL\in\PiRL(\cQ,\cP)$. Suppose Assumption \ref{assump:existence_of_pstar} holds. Moreover, assume that for any $p\in\cP$, there exists a constant $c_{p} > 0$ and a weight function $w_p$ such that $Tw_p(T)\ra\infty$ and 
\begin{equation}\label{eqn:regret_bd_RL_policy}
  E_{\mu}^{\piRL,p}R_{r,p}(T) \leq \frac{c_{p}}{w_p(T)}
\end{equation}
for all $T\geq 1$. Then, for all weight functions $w$ such that $Tw(T)\ra\infty$, 
$$\mrm{TV}_{w}(\mu,\piRL) \geq -\sup_{p^*\in\cP^*_\mu}\limsup_{T\ra\infty}\frac{c_{p^*}w(T)}{w_{p^*}(T)}.$$
\end{proposition}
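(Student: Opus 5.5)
The plan is to evaluate the infimum over $p\in\cP$ in Definition~\ref{def:TV} by splitting $\cP$ into the worst-case kernels $\cP^*_\mu$, which are nonempty by Assumption~\ref{assump:existence_of_pstar}, and the remaining kernels $\cP\setminus\cP^*_\mu$. I will lower bound the $\liminf$ separately on each piece. If the supremum on the right-hand side is $+\infty$ there is nothing to prove, so I assume it is finite.

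The basic identity comes from the definition \eqref{eqn:regret_def} of regret. Taking expectation with $X_0\sim\mu$ gives, for every $p\in\cP$,
\[
E_\mu^{\piRL,p}\sum_{t=0}^{T-1}\sqbk{r(X_t,A_t)-\alpha^*(\mu)} = -E_\mu^{\piRL,p}R_{r,p}(T) + T\delta_p,
\]
where $\delta_p:=\sum_{s\in S}\mu(s)\alpha_p(s)-\alpha^*(\mu)\ge 0$ by the definition \eqref{eqn:alpha_star_mu} of $\alpha^*(\mu)$.

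First take $p^*\in\cP^*_\mu$, so that $\delta_{p^*}=0$. The regret bound \eqref{eqn:regret_bd_RL_policy} then gives
\[
w(T)E_\mu^{\piRL,p^*}\sum_{t=0}^{T-1}\sqbk{r(X_t,A_t)-\alpha^*(\mu)}\ge -\frac{c_{p^*}w(T)}{w_{p^*}(T)}.
\]
Taking $\liminf_{T\ra\infty}$ on both sides yields a lower bound of $-\limsup_{T\ra\infty} c_{p^*}w(T)/w_{p^*}(T)$. This is at least the claimed right-hand side for every such $p^*$.

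Next take $p\notin\cP^*_\mu$, so that $\delta_p>0$. Here I will not rely on \eqref{eqn:regret_bd_RL_policy}, because the ratio $w(T)/w_p(T)$ need not be controlled. Instead I use Definition~\ref{def:online_RL}: since $\piRL\in\PiRL(\cQ,\cP)$, we have $E_\mu^{\piRL,p}R_{r,p}(T)=o(T)$. The weighted quantity then equals
\[
Tw(T)\crbk{\delta_p - \tfrac{1}{T}E_\mu^{\piRL,p}R_{r,p}(T)}.
\]
The bracket converges to $\delta_p>0$, and $Tw(T)\ra\infty$ by assumption, so the $\liminf$ is $+\infty$. Such kernels therefore never attain the infimum.

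Combining the two cases, the infimum over $\cP$ is at least the infimum over $\cP^*_\mu$ of the first bound. That infimum equals $-\sup_{p^*\in\cP^*_\mu}\limsup_{T\ra\infty} c_{p^*}w(T)/w_{p^*}(T)$, which is the claim.

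The main subtlety is the non-worst-case kernels. A naive use of \eqref{eqn:regret_bd_RL_policy} there could produce a $-\infty$ term when $w_p$ decays faster than $w$. This is avoided by invoking the sublinear-regret property from Definition~\ref{def:online_RL} together with $Tw(T)\ra\infty$.
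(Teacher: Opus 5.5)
Your proof is correct and matches the paper's argument: the same regret identity, and the same split of $\cP$ into $\cP^*_\mu$ (bounded via \eqref{eqn:regret_bd_RL_policy}) and its complement (contributing $+\infty$ because $Tw(T)\to\infty$). The only difference is that for $p\notin\cP^*_\mu$ you invoke the sublinear regret from Definition~\ref{def:online_RL}, whereas the paper uses \eqref{eqn:regret_bd_RL_policy} directly. Your worry about the paper's route is unfounded: since $Tw_p(T)\to\infty$, the term $-c_pw(T)/w_p(T)=-Tw(T)\cdot c_p/(Tw_p(T))$ is dominated by $Tw(T)\delta_p$.
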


\begin{remark}\label{rmk:TV_growth_rate}
Before presenting the proof, we note that Proposition \ref{prop:TV_lower_bd_RL_policy} identifies how the decay rate of $w(T)$ needed to ensure $\mrm{TV}_{w}(\mu,\piRL) > -\infty$ is governed by the regret growth rate of $\piRL$ against the \textit{worst-case kernels} $p^*\in\cP^*_\mu$ that attain the robust optimal average reward $\alpha^*(\mu)$. Importantly, this dependence is through the quantities $w_{p^*}(T)$, and therefore can be less stringent than what would be suggested by the worst-case rate $\sup_{p\in\cP} w_p(T)$.

In particular, a typical minimax-optimal RL policy achieves $\widetilde{O}(\sqrt{T})$ \citep{jaksch2010near,zhang2023amdp_regret} regret for all weakly communicating $p\in\cP$, with a common growth rate but instance-dependent constants. In this case, one may take $w_p(T)=\widetilde{\Omega}(T^{-1/2})$ for all such $p$. For instance, applying Proposition \ref{prop:TV_lower_bd_RL_policy} to the RL policy $\pi_{\mathrm{ZX}}$ in Example \ref{example:RL_alg_ZX}, we obtain that under the assumptions of Corollary \ref{cor:piZX_RL}, for any weight function $w(T)=O(T^{-1/2-\epsilon})$, we have $\mrm{TV}_w(\mu,\pi_{\mathrm{ZX}})=0$ for all $\mu\in\cP(\cS)$.

On the other hand, if a policy $\pi$ is designed to adapt to the sub-optimal $p\in\cP$ and, in addition, achieves a \emph{slower} regret growth rate (ideally $O(1)$) against the particular kernels $p^*\in\cP^*_\mu$, then Proposition \ref{prop:TV_lower_bd_RL_policy} permits $w(T)$ to decay more slowly while still ensuring $\mrm{TV}_{w}(\mu,\pi) > -\infty$. This highlights that transient values of optimal policies in robust MDPs can behave fundamentally differently from minimax regret in standard online RL. We leverage this observation in the next section to construct robust optimal policies with constant-order transient values.
\end{remark}

\begin{proof}[Proof of Proposition \ref{prop:TV_lower_bd_RL_policy}]
We note that for any $p\in\cP$, by the definition of the regret in \eqref{eqn:regret_def},
$$\begin{aligned}
E_\mu^{\piRL,p}\sum_{t=0}^{T-1}[r(X_t,A_t) - \alpha^*(\mu)] &= E_\mu^{\piRL,p}\sqbk{\sum_{t=0}^{T-1}r(X_t,A_t) }- T\alpha^*(\mu)\\
&= -E_\mu^{\piRL,p}R_{r,p}(T) + T\sqbk{\sum_{s\in S}\mu(s)\alpha_p(s) - \alpha^*(\mu)}.
\end{aligned}$$
Therefore, by \eqref{eqn:regret_bd_RL_policy}, we have that for any $p\in\cP$,
$$\begin{aligned}  
E_\mu^{\piRL,p}\sum_{t=0}^{T-1}[r(X_t,A_t) - \alpha^*(\mu)] &\geq - \frac{c_{p}}{w_p(T)} + T\sqbk{\sum_{s\in S}\mu(s)\alpha_p(s) - \alpha^*(\mu)}.
\end{aligned}$$
In particular, if $p\notin\cP^*_\mu$, then $\sum_{s\in S}\mu(s)\alpha_p(s) - \alpha^*(\mu) > 0$, and hence for any weight function $Tw(T)\ra\infty$,
$$\liminf_{T\ra\infty} w(T)E_\mu^{\piRL,p}\sum_{t=0}^{T-1}[r(X_t,A_t) - \alpha^*(\mu)] = \infty,$$ 
while for $p^*\in\cP^*_\mu$, 
$$\begin{aligned}\liminf_{T\ra\infty} w(T)E_\mu^{\piRL,p^*}\sum_{t=0}^{T-1}[r(X_t,A_t) - \alpha^*(\mu)] \geq\liminf_{T\ra\infty}-\frac{c_{p^*}w(T)}{w_{p^*}(T)}.
\end{aligned}$$

Therefore, by the definition of the transient value,
$$\begin{aligned}
\mrm{TV}_w(\mu,\piRL) &= \inf_{p\in\cP}\liminf_{T\ra\infty} w(T)E_\mu^{\piRL,p}\sum_{t=0}^{T-1}[r(X_t,A_t) - \alpha^*(\mu)]\\
&\stackrel{(i)}{=} \inf_{p\in\cP^*_\mu}\liminf_{T\ra\infty} w(T)E_\mu^{\piRL,p}\sum_{t=0}^{T-1}[r(X_t,A_t) - \alpha^*(\mu)]\\
&\geq -\sup_{p^*\in\cP^*_\mu}\limsup_{T\ra\infty}\frac{c_{p^*}w(T)}{w_{p^*}(T)}.
\end{aligned}$$
where $(i)$ follows from the fact that for any $p\notin\cP^*_\mu$, the corresponding limit is positive infinity. 
\end{proof}

\section{A Policy with Constant Order Transient Value}
This section develops and analyzes an optimal policy with constant-order transient value ($w \equiv 1$) of the same order as the bias span of an average-reward optimal model, rather than diverging with time.

Assuming a weakly communicating scenario, the design of this policy draws inspiration from the observation in Remark \ref{rmk:TV_growth_rate}, as well as the fact that the controller ``knows'' the optimal adversarial response, i.e., $p^*$ such that $\alpha_{p^*}=\alpha^*$, so long as its policy achieves online RL.

Specifically, we construct a policy that combines (i) an optimal stationary policy $\Delta^*$ for the worst-case kernel $p^*$ with (ii) an anytime-valid sequential test that detects when the observed trajectory is inconsistent with the induced optimal Markov kernel $p^*_{\Delta^*}$ and (iii) a reference online RL policy $\piRL\in\PiRL(\cQ,\cP)$. 

This policy starts by drawing decisions from $\Delta^*$ while computing the test statistics along the way. If the test never rejects and we keep using $\Delta^*$, then the transition dynamics behave essentially like the Markov chain under $(\Delta^*,p^*)$. By the characterization \eqref{eqn:wc_Bellman_eqn}, this yields the optimal average reward $\alpha^*$ with $-O(\spnorm{v_{p^*}})$ transient value. On the other hand, if the test rightfully rejects, then the policy switches to a reference online RL policy $\pi_{\mathrm{RL}}$, ensuring that under any strictly suboptimal kernel the transient value grows without bound.

Central to the design of this policy is the calibration of the sequential test. We need to ensure that false rejections, i.e., type-I error, occur with sufficiently small probability $\rho$, while true rejections occur quickly, with expected delay $O(\log(1/\rho))$ whenever the alternative differs from $p^*_{\Delta^*}$; see Theorem~\ref{thm:rej_time}. These ingredients are then assembled into the epoch-based policy $\pi^*$ and analyzed to yield an explicit constant-order lower bound on $\mrm{TV}(\mu,\pi^*)$; see Theorem~\ref{thm:pistar_tv_bound}.

\subsection{A Brief Introduction to Sequential Probability Ratio Tests for Markov Chains}

A key component of our construction is a sequential likelihood-ratio-type statistic that continuously tests the null hypothesis that the trajectory is generated by the kernel $P_0 := p^*_{\Delta^*}$ against a composite alternative in which the trajectory is generated by some other kernel. This is in the spirit of Wald’s sequential probability ratio test \citep{wald1945sprt}, adapted to Markovian dependence and to a composite alternative via a mixture likelihood ratio.

We proceed with defining the mixture likelihood ratio process and the rejection time. 

\begin{definition}[Mixture Likelihood Ratio Process and Rejection Time]\label{def:mixture_RL_process_and_rej_time}
Let $P_0$ be the null transition kernel and let $\Pi\in\cP(\cB(\R^{|S|\times|S|}))$ be a prior on transition kernels. For $n\ge 1$, define the mixture likelihood ratio process
\begin{equation}\label{eqn:def_mixture_LR}
\Lambda_n
:=
\int \prod_{t=0}^{n-1}\frac{Q(X_{t+1}| X_t)}{P_0(X_{t+1}| X_t)} \Pi(dQ),
\end{equation}
with the convention $q/0=+\infty$ for $q>0$.
Define for $\rho\in(0,1)$ the rejection time
\begin{equation}\label{eqn:rej_time}
\tau_\rho:=\inf\set{n\ge 1: P_0(X_n| X_{n-1})=0\ \text{or}\ \Lambda_{n}\ge \frac1\rho}.
\end{equation}

\end{definition}

\begin{definition}[Product Dirichlet Prior]\label{def:dirichlet-prior}
Fix parameters $\set{\gamma(s' | s) > 0:s,s'\in S}$.
Let $\Pi$ be the \emph{product Dirichlet prior}, defined as the law of random matrix $\Phi\in\R^{|S|\times |S|}$ with independent rows, where for each $s\in S$, $\Phi(\cd | s)\sim \mrm{Dirichlet}\crbk{\gamma(\cd | s)}$
and $\Phi(\cd | s)$ is independent of $\Phi(\cd | s')$ for all $s\neq s'$.

\end{definition}

Before we state the main theorem of this section, we define the following notation and terminology. For a transition kernel $Q$, viewed as a matrix in $\R^{S\times S}$, let $Q|_C$ denote the sub-kernel on $C\subseteq   S$, i.e., the principal submatrix of $Q$ indexed by $(s,s')\in C\times C$. Also, let $\cC_Q$ denote the set of closed communicating classes of $Q$. We say that a transition kernel $Q$ is unichain if $\cC_Q = \set{C_0}$ for some $C_0\subseteq   S$; i.e., $Q$ has only one closed communicating class, possibly with some transient states.

\begin{theorem}[Properties of the Rejection Time]\label{thm:rej_time}
Let $\Pi$ be the product Dirichlet prior with parameter $\gamma$ in Definition \ref{def:dirichlet-prior}. Let $P_0$ be unichain with $C_0\subseteq   S$ as its only closed communicating class. Then the following properties hold for any $\mu\in\cP(\cS)$: 
\begin{enumerate}
    \item The type-I-error of ever rejecting under $P_0$ is bounded by $\rho$; i.e., 
    \begin{equation}\label{eqn:type_1_error}
        P_\mu^{P_0}(\tau_\rho < \infty)\leq \rho
    \end{equation}
    \item  Let $P$ be an alternative transition kernel. Suppose $P|_{C_0}\neq P_0|_{C_0}$, then there exists some constant $K <\infty$ that only depend on $(P,P_0,\gamma)$ such that 
    \begin{equation}\label{eqn:E_rej_time_ub}
        E_\mu^P\tau_\rho \leq K\crbk{\log\frac 1 \rho +1}
    \end{equation}
\end{enumerate}
\end{theorem}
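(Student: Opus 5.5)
The plan is to treat the two claims separately. The type-I bound follows from Ville's inequality. The delay bound follows from a closed-form (Laplace-type) lower bound on $\log\Lambda_n$ combined with linear growth of the visit counts to a state where $P$ and $P_0$ differ.

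\textbf{Step 1 (type-I error).} Under $P_\mu^{P_0}$, transitions with $P_0(X_{n}|X_{n-1})=0$ occur with probability zero, so a.s.\ $\tau_\rho=\inf\{n:\Lambda_n\ge 1/\rho\}$. For each fixed $Q$, the product $\prod_{t<n}Q(X_{t+1}|X_t)/P_0(X_{t+1}|X_t)$ is a nonnegative $P_0$-supermartingale with respect to $\cH_n$. Indeed, $E^{P_0}[Q(X_{n+1}|X_n)/P_0(X_{n+1}|X_n)\mid\cH_n]=\sum_{s':P_0(s'|X_n)>0}Q(s'|X_n)\le 1$. By conditional Fubini (Tonelli), the mixture $\Lambda_n$ is then also a nonnegative supermartingale, with $\Lambda_0:=1$. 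Ville's maximal inequality gives $P_\mu^{P_0}(\sup_n\Lambda_n\ge 1/\rho)\le\rho$, which is \eqref{eqn:type_1_error}.

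\textbf{Step 2 (closed form and lower bound for $\log\Lambda_n$).} Let $N_n(s,s')$ be the transition counts and $N_n(s)=\sum_{s'}N_n(s,s')$. By Dirichlet–multinomial conjugacy and independence of the rows,
\[
\Lambda_n=\prod_{s\in S}\frac{B\bigl(\gamma(\cdot|s)+N_n(s,\cdot)\bigr)}{B\bigl(\gamma(\cdot|s)\bigr)\prod_{s'}P_0(s'|s)^{N_n(s,s')}}.
\]
Stirling's formula then gives a constant $c_\gamma$ such that, as long as no $P_0$-forbidden transition has occurred,
\[
\log\Lambda_n\ \ge\ \sum_{s}N_n(s)\,\mathrm{KL}\bigl(\hat P_n(\cdot|s)\,\|\,P_0(\cdot|s)\bigr)-\tfrac{|S|-1}{2}\sum_s\log\bigl(N_n(s)+1\bigr)-c_\gamma .
\]
Here $\hat P_n$ is the empirical kernel. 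In particular, for a single state $s$,
\[
\log\Lambda_n\ \ge\ N_n(s)\,\mathrm{KL}\bigl(\hat P_n(\cdot|s)\|P_0(\cdot|s)\bigr)-c'\log(n+1).
\]
The dropped rows contribute nonnegative KL terms, and their penalty terms are absorbed into the $\log$ term.

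\textbf{Step 3 (reaching a distinguishing state).} Pick $s^\dagger\in C_0$ with $P(\cdot|s^\dagger)\ne P_0(\cdot|s^\dagger)$ restricted to $C_0$. Consider the chain under $P$ stopped at the first $P_0$-forbidden transition. Every other step follows a $P_0$-positive edge. Since $C_0$ is closed, every state of $C_0$ is reachable from every other along $P_0$-positive edges. Since $P_0$ is unichain, every state outside $C_0$ reaches $C_0$ along such edges. Starting from any state, either a forbidden transition (immediate rejection) or a visit to $s^\dagger$ therefore occurs within $|S|$ steps with probability at least some $\epsilon>0$ depending on $(P,P_0)$. This requires a short argument that along these paths, $P$ either follows them or deviates along a forbidden edge, and this is the delicate combinatorial point. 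The argument yields geometric tails for the time to either event. Iterating it (regeneration at successive visits), I would show
\[
P^P_\mu\bigl(\tau_\rho>n,\ N_n(s^\dagger)<\kappa n\bigr)\le C e^{-cn}
\]
for some $\kappa,c,C>0$ depending only on $(P,P_0)$.

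\textbf{Step 4 (concentration and summation).} Given $N_n(s^\dagger)\ge\kappa n$, a multinomial/Hoeffding bound, uniform over the number of visits via a union bound, gives $\mathrm{KL}(\hat P_n(\cdot|s^\dagger)\|P_0(\cdot|s^\dagger))\ge \delta/2$ with probability at least $1-Ce^{-cn}$. Here $\delta:=\mathrm{KL}(P(\cdot|s^\dagger)\|P_0(\cdot|s^\dagger))>0$ is positive by lower semicontinuity of KL. On this event Step 2 gives $\log\Lambda_n\ge \kappa\delta n/2-c'\log(n+1)$, which exceeds $\log(1/\rho)$ once $n\ge K_0(\log(1/\rho)+1)$. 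Hence $P(\tau_\rho>n)\le C'e^{-cn}$ for such $n$, and
\[
E^P_\mu\tau_\rho\le K_0\bigl(\log\tfrac1\rho+1\bigr)+\sum_n C'e^{-cn},
\]
which is \eqref{eqn:E_rej_time_ub}.

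The main obstacle is Step 3: proving linear visit counts to $s^\dagger$ with exponential tails under the alternative $P$, uniformly over initial distributions. This has to handle the case where $P$ might steer away from $s^\dagger$, and I would use forbidden transitions as an absorbing "rejected" state to do so. I would also need to handle, in Step 4, the dependence between the visit counts and the empirical transition frequencies.
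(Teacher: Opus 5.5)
\textbf{Verdict: genuine gap in Step 3.} Your Step 1 is the paper's argument: each fixed-$Q$ likelihood ratio is a supermartingale, then Tonelli, then Ville. The problem is Step 3, and the claim there is not just delicate but false. At a state $x$ on your $P_0$-path where $P(\cdot|x)\neq P_0(\cdot|x)$, the alternative $P$ can put zero mass on the next edge of the path and all its mass on \emph{other} $P_0$-permitted edges. Then it neither follows the path nor makes a forbidden transition, and this can trap the chain in a closed class of $P$ that never meets $s^\dagger$.

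Concretely, take $S=\{1,2,3,4\}$ with $P_0(\cdot|s)$ uniform for every $s$, so $C_0=S$. Let $P$ be uniform on $\{1,2\}$ from states $1,2$ and uniform on $\{3,4\}$ from states $3,4$. The hypothesis $P|_{C_0}\neq P_0|_{C_0}$ holds and there are no $P_0$-forbidden transitions at all. For any fixed $s^\dagger$, the chain started in the other block never visits $s^\dagger$. Since $\Lambda_n\le 4^n$, we also have $\tau_\rho\ge \log_4(1/\rho)$ deterministically. So your target bound $P^P_\mu(\tau_\rho>n,\,N_n(s^\dagger)<\kappa n)\le Ce^{-cn}$, with $\rho$-free constants, fails. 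More to the point, on that event your single-row bound $\log\Lambda_n\ge N_n(s^\dagger)\,\mathrm{KL}(\hat P_n(\cdot|s^\dagger)\|P_0(\cdot|s^\dagger))-c'\log(n+1)$ gives nothing. Steps 2--4 therefore cannot establish rejection there, even though rejection does occur through rows $3,4$.

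\textbf{Missing idea.} Detection has to come from whichever part of the state space the $P$-chain actually lives in. The paper does this by writing $E_\mu^P\tau_\rho=\sum_{C\in\mathcal{C}_P}E_\mu^P\tau_\rho\mathbf{1}\{T_C<\infty\}$ over the closed classes of $P$. It then proves $I_C(P\|P_0)>0$ for \emph{every} such $C$. This is exactly where the unichain property of $P_0$ is used: if $P=P_0$ on a $P$-closed class $C$, then $C$ is $P_0$-closed, hence $C\supseteq C_0$, contradicting $P|_{C_0}\neq P_0|_{C_0}$.

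\textbf{Repair within your framework.} Replace $s^\dagger$ by $D:=\{s:P(\cdot|s)\neq P_0(\cdot|s)\}$.
\begin{enumerate}
\item Off $D$ the $P$-chain moves exactly as the $P_0$-chain. The $P_0$-chain reaches $s^\dagger\in D\cap C_0$ from any state within $|S|$ steps with probability bounded below.
\item Hence the $P$-hitting time of $D$ has uniform geometric tails, and $\sum_{s\in D}N_n(s)\ge\kappa n$ except with exponentially small probability.
\item Keep all rows in your Step 2 bound; the KL terms are nonnegative. By pigeonhole, pick $s\in D$ with $N_n(s)\ge \kappa n/|S|$.
\item Union-bound over $s\in D$ and over visit counts in Step 4. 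Rows where $P(\cdot|s)$ charges $P_0$-null states are handled by the forced forbidden transitions.
\end{enumerate}

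With this fix your route is a valid alternative, and it would even give tail bounds on $\tau_\rho$ rather than just its mean. Your route uses the explicit Dirichlet--multinomial formula, Stirling, and concentration of the empirical kernel. The paper avoids that machinery with the cruder bound $\Lambda_n\ge \Pi(U(\epsilon,P))e^{-\epsilon n}\prod_t P(X_{t+1}|X_t)/P_0(X_{t+1}|X_t)$. This reduces everything to the crossing time of a positive-drift random walk, handled by regeneration and Wald's identity.

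A minor point: the $\tfrac{|S|-1}{2}\log$ penalty in Step 2 is specific to $\gamma\equiv 1/2$. For general $\gamma$ you get $c_\gamma\log(N_n(s)+1)$ (for $\gamma\equiv 1$ it is $(|S|-1)\log$), which is harmless.
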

The proof of this result is presented in Section \ref{sec:proof:thm:rej_time}. We note that the expected rejection time scales with $\log(1/\rho)$ instead of $1/\rho$, an important observation that enables the design of our policy in Section \ref{sec:construct_policy}. 
\subsubsection{Proof of Theorem \ref{thm:rej_time}}\label{sec:proof:thm:rej_time}

\begin{proof} For clarity, we prove the two statements of Theorem \ref{thm:rej_time} separately, each via a stand-alone proposition. The proofs of these propositions are presented in Section \ref{sec:proof:propositions_for_thm_rej_time}.

We show a more general version of \eqref{eqn:type_1_error} summarized in the following Proposition \ref{prop:type-I-error}.
\begin{proposition}[Type-I error control]\label{prop:type-I-error}
Let $P_0$ be any transition kernel on $S$ and let $\Pi$ be any prior. For every $\rho\in(0,1)$, the type-I error of ever rejecting under $P_0$ is bounded by $\rho$; i.e., $P_\mu^{P_0}\crbk{\tau_\rho < \infty}\leq \rho.$
\end{proposition}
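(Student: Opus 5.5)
The plan is the standard Ville's-inequality argument for the mixture likelihood ratio, adapted to handle the two rejection triggers in \eqref{eqn:rej_time}. First I would dispose of the first trigger. Under $P_\mu^{P_0}$, the event that $P_0(X_n|X_{n-1})=0$ for some $n\ge1$ has probability zero, since each such event has probability $E_\mu^{P_0}[P_0(X_n|X_{n-1})\1\{P_0(X_n|X_{n-1})=0\}]$-type mass equal to $0$, and a countable union of null events is null. Hence, $P_\mu^{P_0}$-a.s., every factor $Q(X_{t+1}|X_t)/P_0(X_{t+1}|X_t)$ is a finite nonnegative number, the convention $q/0=+\infty$ is never invoked, and $\Lambda_n$ is a well-defined nonnegative random variable (by Tonelli, the integrand is jointly measurable in $(Q,\omega)$). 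Consequently $P_\mu^{P_0}(\tau_\rho<\infty)=P_\mu^{P_0}(\sup_{n\ge1}\Lambda_n\ge 1/\rho)$.

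Next I would show that $(\Lambda_n)_{n\ge0}$, with $\Lambda_0:=1$, is a nonnegative martingale with respect to $\cH_n$ under $P_\mu^{P_0}$ (here the ``policy'' is irrelevant: the state process is a Markov chain with kernel $P_0$). For each fixed $Q$, write $L_n(Q):=\prod_{t=0}^{n-1}Q(X_{t+1}|X_t)/P_0(X_{t+1}|X_t)$. Then
\[
E_\mu^{P_0}\sqbkcond{L_{n+1}(Q)}{\cH_n}=L_n(Q)\sum_{s':P_0(s'|X_n)>0}Q(s'|X_n)\le L_n(Q),
\]
so each $L_n(Q)$ is a nonnegative supermartingale (a martingale when $Q\ll P_0$ rowwise). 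Integrating against $\Pi$ and exchanging conditional expectation and $\Pi(dQ)$ by conditional Tonelli (everything nonnegative) gives $E_\mu^{P_0}[\Lambda_{n+1}|\cH_n]\le\Lambda_n$, and $E\Lambda_0=1$. Thus $\Lambda_n$ is a nonnegative supermartingale with initial value $1$.

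Finally, Ville's maximal inequality for nonnegative supermartingales yields $P_\mu^{P_0}(\sup_n\Lambda_n\ge1/\rho)\le \rho\,E\Lambda_0=\rho$. To be self-contained, I would derive it via optional stopping: letting $\sigma:=\inf\{n:\Lambda_n\ge1/\rho\}$, for each $N$, $E\Lambda_{\sigma\wedge N}\le1$ and $\Lambda_{\sigma\wedge N}\ge \rho^{-1}\1\{\sigma\le N\}$, so $P(\sigma\le N)\le\rho$; let $N\to\infty$ by monotone convergence. The only delicate step is the measure-theoretic one—the a.s.\ exclusion of zero-probability transitions and the Tonelli interchange for an arbitrary prior $\Pi$ (possibly putting mass on $Q$ not absolutely continuous w.r.t.\ $P_0$)—which is why I work with the supermartingale inequality rather than insisting on an exact martingale.
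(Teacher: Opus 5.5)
Your proposal is correct and takes essentially the same route as the paper. Both arguments discard the null event of a zero-$P_0$-probability transition, show that each $\prod_{t<n} Q(X_{t+1}|X_t)/P_0(X_{t+1}|X_t)$ is a nonnegative supermartingale (conditional increment $\sum_{s':P_0(s'|X_n)>0}Q(s'|X_n)\le 1$), integrate over $\Pi$ via Tonelli, and conclude with Ville's inequality proved by optional stopping at $\sigma\wedge N$. The only blemish is wording: your opening description of $\Lambda_n$ as a martingale should say supermartingale, since equality fails when $\Pi$ charges kernels not dominated by $P_0$; your computation in fact proves and uses only the supermartingale property.
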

This proposition immediately implies the first statement of Theorem \ref{thm:rej_time}.

Next, we prove the second statement of Theorem \ref{thm:rej_time}. We first present Lemma \ref{lemma:dirichlet-prior}, which states an important property of the product Dirichlet prior in Definition \ref{def:dirichlet-prior}. This property underlies our analysis by turning the mixture likelihood ratio process $\Lambda_n$ into a likelihood ratio process for testing against a simple alternative $P$. The proof of Lemma \ref{lemma:dirichlet-prior} is deferred to \ref{sec:proof:lemma:dirichlet-prior}.

\begin{lemma}[Uniformity of Product Dirichlet Prior]\label{lemma:dirichlet-prior}

Let $\Pi$ be the product Dirichlet prior with parameter $\gamma$ in Definition \ref{def:dirichlet-prior}. Then, for any transition kernel $P$ and every $\epsilon>0$, the set
\begin{equation}\label{eqn:def-Ueps}
U(\epsilon,P):=\set{Q:\ Q(s' | s)\ge e^{-\epsilon}P(s' | s)\ \ \forall s,s'\in S}
\end{equation}
satisfies $\Pi(U(\epsilon,P))>0$.
\end{lemma}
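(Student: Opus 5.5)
We need to show that $\Pi(U(\epsilon,P))>0$ for the product Dirichlet prior. The plan is to reduce to a single row using independence, and then show that each row's event contains an open subset of the simplex, which a Dirichlet law with strictly positive parameters charges.

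\emph{Reduction to rows.} The constraint defining $U(\epsilon,P)$ decouples across rows: $Q\in U(\epsilon,P)$ if and only if, for every $s\in S$, the row $Q(\cdot|s)$ lies in
\[
U_s:=\set{q\in\cP(\cS): q(s')\ge e^{-\epsilon}P(s'|s)\ \forall s'\in S}.
\]
Under $\Pi$ the rows $\Phi(\cdot|s)$ are independent, so $\Pi(U(\epsilon,P))=\prod_{s\in S}\Pr(\Phi(\cdot|s)\in U_s)$. It therefore suffices to show that each factor is positive.

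\emph{Each row event has nonempty interior.} Fix $s$ and write $p=P(\cdot|s)$, a probability vector on $S$. The point $q_0=p$ lies in $U_s$, but it may sit on the boundary of the simplex when $p$ has zero entries. To get interior points, set $\kappa:=(1-e^{-\epsilon})/2>0$ and consider
\[
q_1:=(1-\kappa)p+\kappa\,u,
\]
where $u$ is the uniform distribution on $S$.
\begin{itemize}
\item Every coordinate of $q_1$ is strictly positive, since $q_1(s')\ge \kappa/|S|>0$.
\item Since $1-\kappa>e^{-\epsilon}$, we have $q_1(s')-e^{-\epsilon}p(s')\ge(1-\kappa-e^{-\epsilon})p(s')+\kappa/|S|\ge\kappa/|S|>0$.
\end{itemize}
So every inequality defining $U_s$ is strict at $q_1$, and $q_1$ is in the relative interior of the simplex. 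By continuity, there is a relatively open neighborhood $N$ of $q_1$ in the simplex, of positive $(|S|-1)$-dimensional Lebesgue measure, on which all coordinates remain positive and all inequalities remain strict. Hence $N\subseteq U_s$.

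\emph{Positivity of the Dirichlet mass.} Since $\gamma(\cdot|s)>0$, the Dirichlet law has a density with respect to Lebesgue measure on the simplex, proportional to $\prod_{s'}q(s')^{\gamma(s'|s)-1}$. This density is strictly positive on the open simplex, so $\Pr(\Phi(\cdot|s)\in N)>0$. The case $|S|=1$ is trivial, since $U_s$ is then the whole one-point simplex.

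Multiplying over the finitely many $s\in S$ gives $\Pi(U(\epsilon,P))>0$. The only delicate step is handling boundary rows of $P$, i.e., those with zero entries; this is exactly why we perturb toward the uniform distribution to obtain a strictly interior point before invoking continuity.
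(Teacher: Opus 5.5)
Your proof is correct and follows essentially the same route as the paper. You factor $\Pi(U(\epsilon,P))$ over the independent rows, obtain a strictly interior point of each row's constraint set by mixing $P(\cdot|s)$ with the uniform distribution (the paper puts weight $e^{-\epsilon}$ on $P(\cdot|s)$, you put $1-\kappa$), and conclude from the strict positivity of the Dirichlet density near that point; your explicit handling of the $|S|=1$ case is a small addition that the paper leaves implicit.
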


We define the following parameter
\begin{equation}\label{eqn:def_beta}
\beta(\rho,\epsilon):=\log\frac{1}{\rho \Pi(U(\epsilon,P))}.   
\end{equation}
By Lemma \ref{lemma:dirichlet-prior}, $\beta(\rho,\epsilon) < \infty$ for any $\epsilon > 0$ and $\rho\in(0,1)$. 

Let $P$ be a transition kernel on a finite state space $S$ and $C\subseteq   S$ be one of its closed communicating classes. We denote $\nu_{P,C}$ as the unique stationary distribution of a Markov chain with transition kernel $P$ restricted to a closed communicating class $C$. Moreover, for any $C\in\cC_P$, we define the following $\nu_{P,C}$-weighted KL-divergence. 
\begin{definition}[KL-Divergence between Transition Kernels]
  Consider two transition kernels $P$ and $P_0$ on $S$. Let $C$ be a closed communicating class of $P$. We define the stationary distribution weighted KL-divergence as 
  \begin{equation}\begin{aligned}I_C(P\|P_0) &:= \sum_{s\in C}\nu_{P,C}(s)\sum_{s'\in S}P(s'|s)\log\crbk{\frac{P(s'|s)}{P_0(s'|s)}} \\
  &= \sum_{s\in C}\nu_{P,C}(s)\mrm{KL}(P(\cd|s)\|P_0(\cd|s)).
  \end{aligned}
  \end{equation}
  where $\mrm{KL}(\cd\|\cd)$ is the KL divergence, with $\mrm{KL}(\mu\|\nu)=\infty$ if $\mu(s)>0$ and $\nu(s)=0$ for some $s$.
\end{definition}

Then, the second statement of Theorem \ref{thm:rej_time} will follow from the following proposition, whose proof is deferred to the next section. 

\begin{proposition}[Bounds on Expected Rejection Time]\label{prop:E_rej_time}
Fix an alternative transition kernel $P$ on $S$ and $C\in\cC_P$, let $T_C:=\inf\set{t\ge 0:X_t\in C}$ and write $\1_C:=\1\set{T_C<\infty}.$

If $I_C(P\|P_0)=\infty$, then there exists $K_C<\infty$, depending only on $( P,P_0,C)$, such that for all $\rho\in(0,1)$,
\begin{equation}\label{eqn:Etau-inf}
E_\mu^P\tau_\rho \1_C
\le K_C.
\end{equation}

If $0<I_C(P\|P_0)<\infty$, let $\epsilon:=\frac12 I_C(P\|P_0)$ and let $\beta(\rho,\epsilon)$ be as in \eqref{eqn:def_beta}.
There exists $K_C<\infty$, depending only on $(P,P_0,C)$, such that for all $\rho\in(0,1)$,
\begin{equation}\label{eqn:Etau-fin}
E_\mu^P\tau_\rho \1_C
\le K_C(1+\beta(\rho,\epsilon)).
\end{equation}

\end{proposition}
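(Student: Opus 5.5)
We split the trajectory at the hitting time $T_C$ and use the strong Markov property. On $\{T_C<\infty\}$ we have $\tau_\rho\le T_C+(\tau_\rho-T_C)^+$. Since $C$ is a closed communicating class of $P$, the process stays in $C$ after $T_C$.

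For the first term, we do not need $E_\mu^P T_C$ itself to be finite. We only need $E_\mu^P[T_C\1_C]<\infty$. This holds because $S$ is finite: before absorption into a closed class, the chain sits in transient states of $P$, so both the time spent there and the hitting time $T_C$ on $\{T_C<\infty\}$ have geometric tails. The constant depends only on $(P,C)$.

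For the second term we need a bound, uniform over starting states $s\in C$, on the time to reject using only data collected after $T_C$. The key observation is that the rejection criterion is monotone in the pre-$T_C$ history. For the product Dirichlet prior, $\Lambda_n$ is a ratio of Dirichlet normalizing constants times $P_0$ likelihoods, and it does not factor cleanly. We therefore work through a lower bound instead.

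\textbf{Lower bound on $\Lambda_n$.} For every $Q\in U(\epsilon,P)$ and every path,
\[
\prod_{t<n}\frac{Q(X_{t+1}|X_t)}{P_0(X_{t+1}|X_t)}\ \ge\ e^{-\epsilon n}\prod_{t<n}\frac{P(X_{t+1}|X_t)}{P_0(X_{t+1}|X_t)}.
\]
Integrating over $U(\epsilon,P)$ gives
\[
\log\Lambda_n\ \ge\ \log\Pi(U(\epsilon,P)) - \epsilon n + L_n,
\qquad
L_n:=\sum_{t<n}\log\frac{P(X_{t+1}|X_t)}{P_0(X_{t+1}|X_t)}.
\]
So rejection occurs no later than the first $n$ with $L_n-\epsilon n\ge \beta(\rho,\epsilon)$. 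The pre-$T_C$ increments of $L_n$ may be negative and pose a difficulty; we treat this below.

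\textbf{Finite case, $0<I_C<\infty$.} After $T_C$, the increments $\xi_t=\log\frac{P(X_{t+1}|X_t)}{P_0(X_{t+1}|X_t)}$ form an additive functional of an irreducible finite chain on $C$. This functional has stationary mean $I_C(P\|P_0)=2\epsilon$, and the increments are bounded: both kernels are finite and every $P$-possible transition has $P_0>0$, since otherwise $\tau_\rho$ fires immediately. We solve the Poisson equation $h-Ph=\bar\xi-2\epsilon$ on $C$, where $\bar\xi(s)=\mathrm{KL}(P(\cdot|s)\|P_0(\cdot|s))$. This writes $L_n-L_{T_C}-\epsilon(n-T_C)$ as a martingale with bounded increments, plus drift $\epsilon(n-T_C)$, plus a bounded correction $\pm\spnorm{h}$.

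Wald's identity, applied to the stopping time $\sigma$ at which this process first exceeds $\beta+|L_{T_C}|+\epsilon T_C$, then gives
\[
\epsilon\,E[\sigma-T_C]\ \le\ \beta+E|L_{T_C}|+\epsilon E[T_C]+\text{const}.
\]
Here $|L_{T_C}|\le T_C\max|\xi|$, so it is controlled by the geometric tail of $T_C$. Truncating $\sigma\wedge m$ and letting $m\to\infty$ justifies the optional stopping. This yields \eqref{eqn:Etau-fin}.

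\textbf{Infinite case, $I_C=\infty$.} Some $s\in C$ and $s'$ satisfy $P(s'|s)>0=P_0(s'|s)$. By irreducibility on $C$, the transition $s\to s'$ happens after a time with geometric tails, uniformly in the starting state. At that step the indicator $P_0(X_n|X_{n-1})=0$ in \eqref{eqn:rej_time} forces rejection regardless of $\rho$, which gives \eqref{eqn:Etau-inf}.

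\textbf{Main obstacle.} The hard step is the uniform-in-$\rho$ control of the negative pre-$T_C$ contribution $L_{T_C}$ and the Wald/optional-stopping argument for a Markov additive functional. If the Poisson-equation route proves awkward, we would instead use a regenerative argument. Split the time in $C$ into i.i.d. excursions from a fixed state in $C$; each excursion has mean increment (mean cycle length)$\times 2\epsilon$ and exponential moments. Then apply Wald's identity for random walks with positive drift together with the standard overshoot bound.
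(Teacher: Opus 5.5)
Your proposal is correct, and its architecture matches the paper's. It uses the same reduction $\tau_\rho\le\sigma(\epsilon)$ through the mass $\Pi(U(\epsilon,P))$ (the paper's Lemma~\ref{lemma:mixture-reduction}), the same split at $T_C$ with $E_\mu^P[T_C\1_C]$ finite by geometric tails, and the same edge-hitting argument when $I_C(P\|P_0)=\infty$.

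The genuine difference is the finite case. The paper takes the route you list as your fallback. It regenerates at returns to a fixed $s_C\in C$ and treats the cycle increments $M_k$ as an i.i.d.\ random walk. It bounds the number of cycles needed to cross $\beta$ by a Chernoff argument and converts cycles to time with Wald's identity (Lemma~\ref{lemma:regen-crossing}). It then controls the pre-entry deficit through $S_{\theta_0}(\epsilon)\ge(l_{P,P_0}-\epsilon)\theta_0$.

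You instead use the Poisson equation on $C$ to write $\sum_{t=T_C}^{n-1}(\xi_t-\epsilon)$ as a martingale, plus the drift $\epsilon(n-T_C)$, plus $h(X_{T_C})-h(X_n)$, and apply optional stopping at $\sigma\wedge m$. Since $I_C(P\|P_0)<\infty$ forces $P(\cdot|s)\ll P_0(\cdot|s)$ for $s\in C$, increments after $T_C$ are bounded. So the overshoot is at most one step, and no exponential moments are needed.

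Each route has its advantage:
\begin{itemize}
\item Yours checks the threshold at every step rather than only at regeneration epochs. It gives the sharper leading term $\beta/\epsilon=2\beta/I_C(P\|P_0)$, versus the paper's $4\beta/I_C(P\|P_0)$; the paper loses that factor when it chooses $t$ with $\lambda(t)/t\ge\frac12 E M_1$.
\item The regenerative route needs no Poisson solution and no overshoot control, only exponential moments of the cycles.
\end{itemize}

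One small repair is needed. In your threshold, use the negative part $\max\{0,-L_{T_C}\}\le T_C\max\{0,-l_{P,P_0}\}$ instead of $|L_{T_C}|$. A pre-entry transition out of a transient state may have $P_0=0$, which makes $L_{T_C}=+\infty$. On that event $\tau_\rho\le T_C$ anyway, but with $|L_{T_C}|$ your $\sigma$ would be infinite there, so you would have to split the two events by hand.
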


To apply Proposition \ref{prop:E_rej_time}, we verify that if $P|_{C_0}\neq P_0|_{C_0}$, $I_C(P\|P_0) > 0$ for all $C\in\cC_P$. We show the contrapositive statement that, if $I_C(P\|P_0) = 0$ for some $C\in\cC_P$, then $P|_{C_0}= P_0|_{C_0}$. 

Since $C$ is a closed communicating class of $P$, $\nu_{P,C}(s)>0$ for all $s\in C$. So, if $I_C(P\|P_0)=0$, $$\mathrm{KL}(P(\cdot | s)\|P_0(\cdot | s)) = 0$$ for every $s\in C$. Hence $P(\cdot| s)=P_0(\cdot| s)$ for all $s\in C$.

Because $C$ is closed under $P$, for each $s\in C$ we have $P(C | s)=1$, hence also $P_0(C | s)=1$. Thus $C$ is closed under $P_0$. Since $P_0$ has a unique closed communicating class $C_0$, it follows that $C\supseteq C_0$. The equality $P(\cdot | s)=P_0(\cdot | s)$ for all $s\in C$ implies $P|_{C_0}=P_0|_{C_0}$.

Therefore, under the assumptions of the second statement of Theorem \ref{thm:rej_time}, $I_C(P\|P_0) > 0$ for all $C\in\cC_P$.  Hence, Proposition \ref{prop:E_rej_time} applies, and with
\[
K := \crbk{\sum_{C\in\cC_P} K_C }\cd\crbk{ 1+\max_{C\in\cC_P,\ I_C(P\|P_0) < \infty}\log\sqbk{\frac{1}{\Pi\crbk{U(\frac12 I_C(P\|P_0), P))}}}}
\]
depending only on $(P,P_0,\gamma)$, we have
\[
E_\mu^P\tau_\rho
=\sum_{C\in\cC_P}E_\mu^P\tau_\rho \1\set{T_C<\infty}
\leq K\crbk{\log\frac 1 \rho +1}.
\]
This completes the proof of Theorem \ref{thm:rej_time}.

\end{proof}

\subsubsection{Proof of Propositions \ref{prop:type-I-error} and \ref{prop:E_rej_time}}\label{sec:proof:propositions_for_thm_rej_time}

\begin{proof}[Proof of Proposition \ref{prop:type-I-error}]
We first note that
\begin{align*}
P_\mu^{P_0}\crbk{\set{\exists\, n\ge 1:\ P_0(X_n | X_{n-1})=0}}=0.
\end{align*}
Therefore,
\begin{equation}\label{eqn:to_use_type-I-error_ub}
P_\mu^{P_0}\crbk{\tau_\rho<\infty}
= P_\mu^{P_0}\crbk{\sup_{n\ge 0}\Lambda_n\ge \frac1\rho}.
\end{equation}

To analyze the r.h.s. probability, we introduce Ville's inequality for nonnegative supermartingales \citep{ville1939}. For completeness, we include a proof in Appendix \ref{sec:proof:lemma:ville_bound}. 

\begin{lemma}[Ville's Inequality]\label{lemma:ville_bound}
Let $\sigma(Z_t:t\leq n)\subseteq  \cF_n$. If $\set{Z_n:n\ge 0}$ be a nonnegative $(P,\cF_n)$-supermartingale with $Z_0=1$, then for every $c >0$,
\begin{equation}\label{eqn:ville_bound}
P\crbk{\sup_{n\ge 0} Z_n \ge \frac{1}{c}}\le c .
\end{equation}
\end{lemma}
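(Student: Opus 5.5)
The plan is the standard optional-stopping argument applied to a truncated first-passage time. Fix $c>0$ and define the hitting time $\sigma:=\inf\set{n\ge 0: Z_n\ge 1/c}$, with $\inf\varnothing=\infty$. Since $\sigma(Z_t:t\le n)\subseteq\cF_n$, the event $\set{\sigma\le n}$ is determined by $Z_0,\ds,Z_n$. Hence $\sigma$ is an $\cF_n$-stopping time. For each fixed $N$, the bounded stopping time $\sigma\wedge N$ lets me apply the optional stopping theorem for supermartingales, which only needs boundedness of the stopping time. This gives
\[
E[Z_{\sigma\wedge N}]\le E[Z_0]=1.
\]

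Next I will lower bound the left side. Nonnegativity of $Z$ gives
\[
Z_{\sigma\wedge N}\ge Z_\sigma\1\set{\sigma\le N}\ge \frac1c\1\set{\sigma\le N},
\]
since $Z_\sigma\ge 1/c$ on $\set{\sigma<\infty}$ by the definition of $\sigma$. Combining the two displays yields $P(\sigma\le N)\le c$ for every $N$.

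Finally, I let $N\to\infty$ and use continuity from below of $P$ along the increasing events $\set{\sigma\le N}$. This gives $P(\sigma<\infty)\le c$. The event $\set{\sup_{n\ge0}Z_n\ge 1/c}$ needs care, because the supremum could reach $1/c$ without any single $Z_n$ doing so. To handle this, I will run the argument with $c'>c$ in place of $c$, i.e., with threshold $1/c'<1/c$. On $\set{\sup_n Z_n\ge 1/c}$ some $Z_n$ exceeds $1/c'$, so this event is contained in $\set{\sigma'<\infty}$, where $\sigma'$ is the hitting time of level $1/c'$. Therefore $P(\sup_n Z_n\ge 1/c)\le c'$ for every $c'>c$. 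Letting $c'\downarrow c$ gives \eqref{eqn:ville_bound}.

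The only delicate point is the boundary issue just described, that the supremum may equal $1/c$ without being attained. The $c'\downarrow c$ device resolves it. Everything else is routine: the optional stopping inequality for bounded stopping times follows directly from the supermartingale property by summing $E[(Z_{k+1}-Z_k)\1\set{\sigma>k}]\le 0$ over $k<N$.
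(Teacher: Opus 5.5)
Your proof is correct and follows the paper's argument: optional stopping at the bounded time $\sigma\wedge m$, the bound $Z_{\sigma\wedge m}\ge \frac1c\mathbf{1}\{\sigma\le m\}$ from nonnegativity, and continuity from below as $m\to\infty$. Your $c'\downarrow c$ step is needed and is the one real improvement over the paper. The paper asserts $P(\sup_{n\ge0}Z_n\ge 1/c)=P(\sigma<\infty)$, but the event $\{\sup_{n\ge 0}Z_n\ge 1/c\}$ only contains $\{\sigma<\infty\}$: the supremum can equal $1/c$ without being attained, even with positive probability. Your version is therefore the rigorous proof of the lemma as stated.
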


Note that $\Lambda_0=1$ by definition and $\sigma(\Lambda_t:t\leq n)\subseteq   \cF_n$. To apply Lemma \ref{lemma:ville_bound}, we show that $\Lambda_n$ is a nonnegative supermartingale. 

Fix a transition kernel $Q$ and define
\[
G_n(Q)=\prod_{t=0}^{n-1}\frac{Q(X_{t+1} | X_t)}{P_0(X_{t+1} | X_t)}
\] with the convention that $q/0 = +\infty$ for $q>0$. 
Note that $\Lambda_n=\int G_n(Q)d\Pi(Q)$ and 
\[
G_{n+1}(Q)
=G_n(Q) \frac{Q(X_{n+1} | X_n)}{P_0(X_{n+1} | X_n)}.
\]

We consider the following conditional expectation
\begin{align*}
E_\mu^{P_0}\sqbkcond{\frac{Q(X_{n+1} | X_n)}{P_0(X_{n+1} | X_n)}}{\cF_n}
&=E_{X_n}^{P_0}\sqbk{\frac{Q(X_{1} | X_0)}{P_0(X_{1} | X_0)}}\\
&= \sum_{s\in S, P_0(s|X_n) > 0}P_0(s|X_n)\frac{Q(s|X_n)}{P_0(s|X_n)}\\
&= \sum_{s\in S,P_0(s|X_n) >0 }Q(s|X_n) \\
&\leq 1
\end{align*}
and hence $E_\mu^{P_0}\sqbk{G_{n+1}(Q) | \cF_n}\leq G_n(Q)$. By Tonelli's theorem,
\begin{align*}
E_\mu^{P_0}\sqbk{\Lambda_{n+1} | \cF_n}
&=
E_\mu^{P_0}\sqbkcond{\int G_{n+1}(Q)\Pi(dQ)}{\cF_n} \\
&=
\int E_\mu^{P_0}\sqbk{G_{n+1}(Q) | \cF_n}\Pi(dQ) \\
&\leq\int G_n(Q)\Pi(dQ) \\
&=
\Lambda_n;
\end{align*}
i.e., $\set{\Lambda_n:n\ge 0}$ is a nonnegative supermartingale under $P_\mu^{P_0}$.

Therefore, applying Lemma \ref{lemma:ville_bound} yields
\[
P_\mu^{P_0}\crbk{\sup_{n\ge 0}\Lambda_n\ge \frac1\rho}\le \rho.
\]
Combining this with \eqref{eqn:to_use_type-I-error_ub} proves Proposition \ref{prop:type-I-error}.
\end{proof}

\begin{proof}[Proof of Proposition \ref{prop:E_rej_time}]To prove Proposition \ref{prop:E_rej_time}, we first turn the mixture likelihood ratio process $\Lambda_n$ into a likelihood ratio process for testing against a simple alternative $P$. To this end, we consider for $Q\in U(\epsilon,P)$
\begin{equation}\label{eqn:Ueps-dominates}
\prod_{t=0}^{n-1}\frac{Q(X_{t+1}| X_t)}{P_0(X_{t+1}| X_t)}
\ge e^{-\epsilon n}\prod_{t=0}^{n-1}\frac{P(X_{t+1}| X_t)}{P_0(X_{t+1}| X_t)}
\end{equation}
for all $n\ge 1$. With this observation and using the product Dirichlet prior $\Pi$ as in Definition \ref{def:dirichlet-prior}, we apply Lemma~\ref{lemma:dirichlet-prior} to get the following lemma, whose proof is deferred to Appendix \ref{sec:proof:lemma:mixture-reduction}. 
\begin{lemma}[Mixture-to-fixed-$P$ Reduction]\label{lemma:mixture-reduction}
Define
\[
S_n(\epsilon):=\sum_{t=0}^{n-1}\crbk{\log \frac{P(X_{t+1}| X_t)}{P_0(X_{t+1}| X_t)}-\epsilon },
\]
and $\sigma(\epsilon):=\inf\set{n\ge 1:S_n(\epsilon)\ge \beta(\rho,\epsilon)}.$
Then $\tau_\rho\le \sigma(\epsilon)$.
\end{lemma}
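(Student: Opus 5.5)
We show that on the event $\{\sigma(\epsilon)=n\}$ with $n<\infty$, the mixture statistic already satisfies $\Lambda_n\ge 1/\rho$, or else the test rejected earlier through the support condition $P_0(X_m|X_{m-1})=0$. Either way $\tau_\rho\le n$; if $\sigma(\epsilon)=\infty$ there is nothing to prove. The argument restricts the integral in \eqref{eqn:def_mixture_LR} to the neighborhood $U(\epsilon,P)$ from Lemma \ref{lemma:dirichlet-prior} and applies the pointwise domination \eqref{eqn:Ueps-dominates}.

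First suppose that for every $m\le n$ we have $P_0(X_m|X_{m-1})>0$, so that all likelihood ratios are finite. Since the integrand is nonnegative,
\[
\Lambda_n \ge \int_{U(\epsilon,P)} \prod_{t=0}^{n-1}\frac{Q(X_{t+1}| X_t)}{P_0(X_{t+1}| X_t)}\,\Pi(dQ)
\ge \Pi(U(\epsilon,P))\, e^{-\epsilon n}\prod_{t=0}^{n-1}\frac{P(X_{t+1}| X_t)}{P_0(X_{t+1}| X_t)} .
\]
The second inequality is \eqref{eqn:Ueps-dominates}, applied pointwise for $Q\in U(\epsilon,P)$. The right-hand side equals $\Pi(U(\epsilon,P))\exp(S_n(\epsilon))$. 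Here we need $P(X_{t+1}|X_t)>0$ along the path, but that holds wherever $S_n(\epsilon)>-\infty$, and in particular wherever $S_n(\epsilon)\ge\beta(\rho,\epsilon)$, which is finite. At $n=\sigma(\epsilon)$ we have $S_n(\epsilon)\ge \beta(\rho,\epsilon)=\log\frac{1}{\rho\,\Pi(U(\epsilon,P))}$, and $\Pi(U(\epsilon,P))>0$ by Lemma \ref{lemma:dirichlet-prior}. Hence $\Lambda_n\ge 1/\rho$, and $\tau_\rho\le n$ by \eqref{eqn:rej_time}.

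Now suppose instead that $P_0(X_m|X_{m-1})=0$ for some $m\le n$. The first such $m$ triggers rejection by \eqref{eqn:rej_time}, so $\tau_\rho\le m\le n$. This case is consistent with the conventions: if $P(X_m|X_{m-1})>0$ the log-ratio is $+\infty$ under $q/0=+\infty$. The only real care needed is in the conventions for zeros, i.e.\ terms with $P=0$ or $P_0=0$, which the case split handles. Everything else is the one-line restriction-and-domination bound above.
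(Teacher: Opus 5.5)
Your proposal is correct and follows the paper's proof essentially verbatim. Like the paper, you restrict the mixture integral to $U(\epsilon,P)$, apply the pointwise domination \eqref{eqn:Ueps-dominates} to get $\Lambda_n\ge \Pi(U(\epsilon,P))\,e^{S_n(\epsilon)}$, and conclude using the definition of $\beta(\rho,\epsilon)$ and the positivity from Lemma \ref{lemma:dirichlet-prior}. Your explicit case split on whether $P_0(X_m|X_{m-1})=0$ for some $m\le n$ is, if anything, slightly more careful than the paper's one-line remark, which only mentions a zero of $P_0$ at the terminal step $n$.
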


Lemma \ref{lemma:mixture-reduction} reduces the problem of bounding the hitting time of the mixture likelihood ratio process \eqref{eqn:def_mixture_LR} to bounding the hitting time of a simple likelihood ratio process that tests $P_0$ against $P$. Moreover, the expectation of the hitting time is taken with respect to a Markov chain with transition kernel $P$. In particular, if $P$ and $P_0$ are irreducible, then by the Markov chain law of large numbers, $n\inv S_n(\epsilon)\ra I_S(P\|P_0) - \epsilon$ a.s.$P_\mu^P$. Hence, $\sigma(\epsilon) < \infty$ a.s.$P_\mu^P$ if $I_S(P\|P_0) - \epsilon > 0$.

With this intuition, we consider the general case where $P$ may have multiple recurrent classes.

\noindent\textbf{Case 1: $I_C(P\|P_0)=\infty$.}

Since $P$ restricted to $C$ is irreducible and $\nu_{P,C}$ has full support on $C$, the condition $I_C(P\|P_0)=\infty$ implies that there exist $s_*,s_*'\in C$ such that
$P(s_*'| s_*)>0$ but $P_0(s_*'| s_*)=0$.
Define the edge-hitting time
\[
T_*:=\inf\set{t\ge 1:X_{t-1}=s_*, X_t=s_*'}.
\]
By definition of $\tau_\rho$, whenever $P_0(X_t| X_{t-1})=0$ we have that $P(X_t|X_{t-1}) > 0$ a.s.$P_\mu^P$; hence, $\tau_\rho\le t$. Therefore, $\tau_\rho\leq T_*$ a.s.$P_\mu^P$. 

Apply the strong Markov property at $T_C$, we have 
$$E_\mu^P\tau_\rho \1_C\leq E_\mu^PT_* \1_C
\le
E_\mu^PT_C \1_C
+\max_{s\in C}E_s^P T_* \cdot P_\mu^P(T_C<\infty).$$
It remains to note that $\max_{s\in C}E_s^P T_*<\infty$: on the finite irreducible chain on $C$, every state has finite expected hitting time to $s_*$, and each visit to $s_*$ has success probability $P(s_*'| s_*)>0$ to traverse the edge $(s_*,s_*')$ on the next step, so the number of visits to $s_*$ before success has geometric tails, yielding uniform finiteness. This gives \eqref{eqn:Etau-inf}.

\noindent\textbf{Case 2: $0<I_C(P\|P_0)<\infty$.}

Set $\epsilon:=\frac12 I_C(P\|P_0)$, and define $\sigma(\epsilon)$ as in Lemma~\ref{lemma:mixture-reduction}.
Lemma~\ref{lemma:mixture-reduction} gives
\begin{equation}\label{eqn:tau-le-sigma}
\tau_\rho\le \sigma(\epsilon)\qquad\text{a.s.\ }P_\mu^P.
\end{equation}
Fix a reference state $s_C\in C$ and define the successive return times
\[
\theta_0:=\inf\set{t\ge 0:X_t=s_C},
\qquad
\theta_{k+1}:=\inf\set{t>\theta_k:X_t=s_C}.
\]
On $\set{T_C<\infty}$, the chain enters $C$ and, since $P$ is irreducible on $C$, hits $s_C$ in finite time; hence $\theta_0<\infty$ a.s.\ on $\1_C=1$.
Define the cycle lengths and cycle log-likelihood increments (for $k\ge 1$)
\[
\xi_k:=\theta_k-\theta_{k-1},
\qquad
M_k:=\sum_{t=\theta_{k-1}}^{\theta_k-1}\crbk{\log\frac{P(X_{t+1}| X_t)}{P_0(X_{t+1}| X_t)}-\epsilon}.
\]
Note that $X_{\theta_{k}} = s_C$, so $M_k\in \sigma(X_{\theta_{k-1}},\ds ,X_{\theta_{k}-1})$. Therefore, by the regeneration property of Markov chains at the return times $\theta_k$, $\set{(\xi_k,M_k):k\ge 1}$ is i.i.d. under $P_{s_C}^P$.

Let $\beta:=\beta(\rho,\epsilon)$ and define $S_n(\epsilon)=\log L_n(P,P_0)-\epsilon n$ as in Lemma~\ref{lemma:mixture-reduction}.
For $k\ge 1$,
\[
S_{\theta_k}(\epsilon)=S_{\theta_0}(\epsilon)+\sum_{i=1}^k M_i.
\]
Define, for $y\in\R$, the cycle counter
\[
N(y):=\inf\set{k\ge 0:y+\sum_{i=1}^k M_i\ge \beta}
\]
and the associated expected additional time
\[
g(y):=E_{s_C}^P\sqbk{\sum_{i=1}^{N(y)}\xi_i}.
\]
Then on $\1_C=1$, $\sigma(\epsilon)\le \theta_{N(S_{\theta_0}(\epsilon))}$, hence by the strong Markov property at $\theta_0$,
\begin{equation}\label{eqn:sigma-reg-bound}
E_\mu^P\sigma(\epsilon) \1_C
\le
E_\mu^P\theta_0 \1_C
+
E_\mu^Pg\crbk{S_{\theta_0}(\epsilon)} \1_C.
\end{equation}

To use \eqref{eqn:sigma-reg-bound}, we first bound $g(y)$ with the following lemma, whose proof is deferred to Appendix \ref{sec:proof:lemma:regen-crossing}. 
\begin{lemma}\label{lemma:regen-crossing}
There exists $\kappa_C<\infty$, depending only on $(P,P_0,C)$, such that
\begin{equation}\label{eqn:regen-crossing-bound}
g(y)\le \frac{4}{I_C(P\|P_0)} (\beta-y)_+ + \kappa_C,
\qquad \forall y\in\R.
\end{equation}
\end{lemma}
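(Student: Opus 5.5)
We treat Lemma~\ref{lemma:regen-crossing} as a standard renewal-type overshoot bound for a random walk with positive drift, where the steps are the i.i.d.\ cycle increments $(\xi_k,M_k)$ under $P^P_{s_C}$.

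\textbf{Cycle moments.} First we compute the per-cycle drift. By the regenerative (cycle) formula for the stationary distribution, $E_{s_C}^P\xi_1 = 1/\nu_{P,C}(s_C)<\infty$. Moreover,
\[
E_{s_C}^P M_1 = E_{s_C}^P\xi_1\cdot\crbk{I_C(P\|P_0)-\epsilon} = \tfrac12 E_{s_C}^P\xi_1\, I_C(P\|P_0) =: m>0.
\]
This holds because the per-step log-likelihood increment has mean $\mathrm{KL}(P(\cd|s)\|P_0(\cd|s))$ at state $s$, and the expected number of visits to $s$ per cycle is $\nu_{P,C}(s)E\xi_1$. Since $I_C<\infty$, every transition used inside $C$ satisfies $P_0>0$. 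Hence each increment is bounded in absolute value by a constant $b$, and $|M_1|\le (b+\epsilon)\xi_1$. The return time $\xi_1$ on a finite irreducible class has geometric tails, so $\xi_1$ and $M_1$ have all moments.

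\textbf{Wald's identity.} Next we bound $E N(y)$. For $y\ge\beta$ we have $N(y)=0$ and $g(y)=0$. For $y<\beta$, $N(y)$ is the first passage of the walk $W_k=\sum_{i\le k}M_i$ above level $\beta-y>0$. Apply Wald's identity to the stopped walk $N\wedge n$ and let $n\to\infty$:
\[
m\,E N(y) = E W_{N(y)} \le (\beta-y) + E[\text{overshoot}].
\]
Lorden's inequality bounds the expected overshoot by $E(M_1^+)^2/m$, a constant depending only on $(P,P_0,C)$. Alternatively, the overshoot is at most $M_{N}^+$, which can be controlled crudely through $E\sup_{i\le N}M_i^+$. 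Then Wald applied to $\xi$ gives
\[
g(y)=E\xi_1\,E N(y)\le \frac{E\xi_1}{m}(\beta-y)+\kappa_C = \frac{2}{I_C(P\|P_0)}(\beta-y)_+ + \kappa_C .
\]
This is within the claimed constant $4/I_C$, so the factor $4$ leaves slack.

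\textbf{Main obstacle.} The delicate step is justifying finiteness of $E N(y)$, which is needed before invoking Wald's identity, together with the overshoot control. Finiteness of $EN$ for a walk with positive mean and integrable steps is classical: for example, truncate $M_i$ below at a level $-L$ with $L$ large enough that the truncated mean is still $\ge m/2$; this gives a walk with bounded-below steps that dominates the original. If Lorden's inequality is not to be cited, the simpler route is as follows. Choose a truncation level $L$ so that $E\min(M_1,L)\ge m/2$. Work with the walk of truncated increments $\min(M_i,L)$, whose first passage time dominates $N(y)$ and whose overshoot is at most $L$. Wald then yields $E N(y)\le 2(\beta-y+L)/m$, and therefore $g(y)\le \frac{4}{I_C}(\beta-y)_+ + \frac{4L}{I_C}$. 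This matches the stated constant $4/I_C$ exactly, with $\kappa_C = 4L/I_C$, so this truncation argument is likely the intended route.
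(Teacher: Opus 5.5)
Your proposal is correct, but it bounds $E_{s_C}^P N(y)$ differently from the paper.

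\textbf{The paper's route.} The paper uses a Chernoff argument. It picks $t>0$ with $E_{s_C}^P e^{-tM_1}<1$ and sets $\lambda(t)=-\log E_{s_C}^P e^{-tM_1}$. This gives the geometric tail $P_{s_C}^P(N(y)>k)\le \exp[t(\beta-y)-k\lambda(t)]$, which it sums. It then shrinks $t$ so that $\lambda(t)/t\ge \frac12 E_{s_C}^P M_1$. The factor $4/I_C(P\|P_0)$ comes from this $\frac12$ together with $E_{s_C}^P M_1=\frac12 I_C(P\|P_0)E_{s_C}^P\xi_1$.

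\textbf{Your route.} You replace the exponential-moment step with Wald's identity for the walk with upper-truncated increments $\min(M_i,L)$. There the $\frac12$ comes from $E\min(M_1,L)\ge m/2$. The rest of the argument coincides with the paper's: Wald for $\sum_{i\le N(y)}\xi_i$ and the renewal--reward identity for $E_{s_C}^P M_1$.

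\textbf{Comparison.} Your version needs only $E_{s_C}^P|M_1|<\infty$. The paper needs exponential moments of $M_1$ and right-differentiability of its Laplace transform at $0$; these do hold here because $\xi_1$ has geometric tails and the in-class increments are bounded. Your version also gives the explicit constant $\kappa_C=4L/I_C(P\|P_0)$, and the Lorden variant even gives slope $2/I_C(P\|P_0)$. In exchange, the paper's argument delivers exponential tail bounds on $N(y)$, which is more than the lemma requires.

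\textbf{Corrections to the write-up.}
\begin{enumerate}
\item Truncating $M_i$ \emph{below} at $-L$ goes in the wrong direction. Since $\max(M_i,-L)\ge M_i$, that walk dominates the original one, crosses the level sooner, and its passage time gives no upper bound on $N(y)$. The upper truncation $\min(M_i,L)$ is what works: $\sum_{i\le k}\min(M_i,L)\le \sum_{i\le k}M_i$ gives $N(y)\le N'(y)$.
\item No separate finiteness argument is needed. Applying Wald to the bounded time $N'(y)\wedge n$ gives $E[N'(y)\wedge n]\cdot E\min(M_1,L)\le \beta-y+L$, and finiteness of $E N'(y)$ follows by monotone convergence.
\item Drop the ``$E\sup_{i\le N}M_i^+$'' alternative. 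The natural bound $E\sum_{i\le N}M_i^+=E N\cdot E M_1^+$ is circular, because $E M_1^+\ge m$ and so it yields nothing about $EN$.
\end{enumerate}
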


Applying Lemma~\ref{lemma:regen-crossing} with $y=S_{\theta_0}(\epsilon)$ yields
\begin{equation}\label{eqn:gS-bound-0}
E_\mu^P\sqbk{g\crbk{S_{\theta_0}(\epsilon)} \1_C}
\le
\frac{4}{I_C(P\|P_0)} E_\mu^P\sqbk{(\beta-S_{\theta_0}(\epsilon))_+ \1_C}
+\kappa_C P_\mu^P(T_C<\infty).
\end{equation}

Next, define
\[
l_{P,P_0}:=\min\set{\log\frac{P(s' | s)}{P_0(s' | s)}: P(s' | s)>0,\ P_0(s' | s)>0}\in\R.
\]
Then, every realized transition has positive probability; i.e., $P(X_{t+1} | X_t)>0$ a.s.$P_\mu^P$. If also $P_0(X_{t+1} | X_t)>0$ then the log-ratio is bounded below by $l_{P,P_0}$ by definition, while if $P_0(X_{t+1} | X_t)=0$ the log-ratio is $+\infty$ by convention. Hence,
\[
\log\frac{P(X_{t+1} | X_t)}{P_0(X_{t+1} | X_t)}\ge l_{P,P_0},
\]
a.s.$P_\mu^P$ and therefore
\begin{equation}\label{eqn:S-lower}
S_{\theta_0}(\epsilon)\ge (l_{P,P_0}-\epsilon)\theta_0
\implies
(\beta-S_{\theta_0}(\epsilon))_+\le \beta + (\epsilon-l_{P,P_0})_+\theta_0.
\end{equation}

Combining \eqref{eqn:gS-bound-0} and \eqref{eqn:S-lower} gives
\begin{equation}\label{eqn:gS-bound}
E_\mu^P\sqbk{g\crbk{S_{\theta_0}(\epsilon)} \1_C}
\le
\frac{4}{I_C(P\|P_0)}\sqbk{\beta P_\mu^P(T_C<\infty)+(\epsilon-l_{P,P_0})_+E_\mu^P\theta_0 \1_C}
+\kappa_C P_\mu^P(T_C<\infty).
\end{equation}
Substituting \eqref{eqn:gS-bound} into \eqref{eqn:sigma-reg-bound} yields
\begin{equation}\label{eqn:Esigma-pre}
E_\mu^P\sigma(\epsilon) \1_C
\le
\crbk{1+\frac{4(\epsilon-l_{P,P_0})_+}{I_C(P\|P_0)}}E_\mu^P\theta_0 \1_C
+\frac{4\beta P_\mu^P(T_C<\infty)}{I_C(P\|P_0)}
+\kappa_C P_\mu^P(T_C<\infty).
\end{equation}

Finally, define the within-class hitting bound
\[
H_C:=\max_{s\in C}E_s^P\theta_0<\infty.
\]
By the strong Markov property at $T_C$,
\begin{equation}\label{eqn:theta0-TC}
\begin{aligned}E_\mu^P\theta_0 \1_C
&\le
E_\mu^P T_C \1_C+H_C P_\mu^P(T_C<\infty)\\
&\leq \max_{s\in S}E_s^P T_C \1_C+H_C. 
\end{aligned}
\end{equation}
Combining \eqref{eqn:tau-le-sigma}, \eqref{eqn:Esigma-pre}, and \eqref{eqn:theta0-TC} gives \eqref{eqn:Etau-fin} after absorbing all $\rho$-independent factors into a constant $K_C$.
\end{proof}

\subsection{Construction of the Policy}\label{sec:construct_policy}

With these preparations, we construct the policy $\pi^*$ that achieves an $O(1)$ transient value under a weakly communicating assumption. The policy proceeds in exponentially growing epochs of length $L_j$ with exponentially smaller rejection probability levels $\rho_j$, as defined in \eqref{eqn:def_L_and_rho}. Within each epoch, $\pi^*$ initially runs $\Delta^*$ while computing a (shifted) mixture likelihood ratio $\Lambda_{t_j,t}$ in \eqref{eqn:def_mixture_LR_with_starting}. If the test rejects before the epoch ends, $\pi^*$ restarts and runs the reference RL policy $\pi_{\mathrm{RL}}$ from the rejection time until the end of the epoch; otherwise, it continues with $\Delta^*$ throughout. The epoch schedule is tuned so that the cumulative impact of occasional false rejections is summable, which is key to obtaining a constant-order transient value.

The intuition is that, if the adversary chooses $p^*$, rejections are rare and the policy behaves essentially like the optimal $\Delta^*$. If the adversary chooses a suboptimal $p$ and the dynamics under $p_{\Delta^*}$ are statistically distinguishable from those of $p^*_{\Delta^*}$, then rejection occurs sufficiently quickly (Theorem~\ref{thm:rej_time}), after which $\pi_{\mathrm{RL}}$ exploits the adversary's suboptimality, i.e., deviation from $p^*$.

With this in mind, we begin by specifying the setting. Note that if $\cP$ is weakly communicating, the optimal average reward $\alpha_p$, solving \ref{eqn:wc_Bellman_eqn} for each $p\in\cP$, is state-independent. Therefore, for any $\mu\in\cP(\cS)$, $\cP_\mu^* = \cP^*:= \set{p\in\cP:\alpha_p = \alpha^*}$ is independent of $\mu$.

Throughout the remainder of this paper, we will assume the following setting. 
\begin{assumption}\label{assump:wc_and_can_use_det}
    Assume that $\cP$ is weakly communicating and $\set{\delta_a:a\in A}\subseteq   \cQ$. 
\end{assumption}

Note that, by Corollary \ref{cor:piZX_RL}, under Assumption \ref{assump:wc_and_can_use_det}, we have
$\PiRL(\cQ,\cP)\neq \varnothing$. Hence, we fix a reference RL policy $\piRL\in\PiRL(\cQ,\cP)$. Next, to proceed with the construction of our policy of interest, we first present the following lemma, whose proof is deferred to Appendix \ref{sec:proof:lemma:wc_unichain_pi}. 

\begin{lemma}\label{lemma:wc_unichain_pi}
    Let $p$ be weakly communicating. Then there exists an average-reward optimal deterministic policy $\Delta:S\ra \set{\delta_a:a\in A}$--i.e., $\alpha(\mu,\Delta,p)=\alpha_p$ where $\alpha_p$ is the unique solution to \eqref{eqn:wc_Bellman_eqn}--such that $p_\Delta$ is unichain.
\end{lemma}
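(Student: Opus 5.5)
We start from a solution $(\alpha_p,v_p)$ of the constant-gain Bellman equation \eqref{eqn:wc_Bellman_eqn} and build a deterministic optimal policy whose induced chain has a single closed class. Let $C_p$ be the communicating class from Definition~\ref{def:wc}. Every state outside $C_p$ is transient under every stationary policy.

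The first step is to take any deterministic gain-optimal policy $\Delta_0$. We choose it conserving, i.e. attaining the max in \eqref{eqn:wc_Bellman_eqn} at every state. A standard telescoping argument shows that every closed recurrent class of $p_{\Delta_0}$ has average reward $\alpha_p$ from every start, so $\Delta_0$ is gain-optimal. Because states in $C_p^c$ are transient under $\Delta_0$, every closed class of $p_{\Delta_0}$ lies inside $C_p$.

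We then fix one closed recurrent class $R$ of $p_{\Delta_0}$ and modify $\Delta_0$ off $R$:
\begin{enumerate}
\item On $R$, keep $\Delta(s)=\Delta_0(s)$. Then $R$ stays closed under $p_\Delta$, and its gain is still $\alpha_p$.
\item For $s\notin R$, choose actions so that $R$ is reached with probability one. Since $C_p$ is communicating under deterministic actions (with $\set{\delta_a}\subseteq\cQ$ it suffices to use deterministic decision rules), every $s\in C_p$ has a path of positive-probability edges into $R$. Do a backward BFS from $R$: at a state $s$ at graph distance $k$ from $R$, choose an action $a$ with $p(s'|s,a)>0$ for some $s'$ at distance $k-1$.
\item For $s\in C_p^c$, first take $\Delta(s)=\Delta_0(s)$ or any action. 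Then $s$ is transient under $p_\Delta$, so the chain leaves $C_p^c$ almost surely and enters $C_p$, after which the BFS actions drive it into $R$.
\end{enumerate}

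It then remains to check two things.

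\emph{Unichain.} Every state outside $R$ has positive probability of strictly decreasing its distance to $R$, or, for states in $C_p^c$, of eventually entering $C_p$. On a finite space this gives absorption into $R$ with probability one. Hence $R$ is the only closed class, and $p_\Delta$ is unichain.

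\emph{Optimality.} Since the chain is absorbed into $R$ in finite expected time and the long-run average on $R$ equals $\alpha_p$, we get $\alpha(\mu,\Delta,p)=\sum_s \mu(s)\,\nu$-average reward on $R$ $=\alpha_p$ for every $\mu$.

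The main obstacle is the case $s\in C_p^c$. The BFS argument needs the states of $C_p^c$ to have a path into $C_p$, and this comes from the transience assumption in Definition~\ref{def:wc}: a finite chain in which $s$ is transient must exit the set of states not leading to a closed class. To handle this cleanly, we will run the BFS on all of $S$. The key observation is that the set $B$ of states from which no action sequence reaches $R$ would, under suitable actions, be closed. By finiteness, such a closed set contains a recurrent class disjoint from $R$. That class must lie in $C_p$, and every state of $C_p$ can reach $R$, which is a contradiction. Hence $B=\varnothing$.
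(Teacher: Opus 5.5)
Your proposal is correct and follows essentially the same route as the paper: take a deterministic gain-optimal policy, keep its actions on one closed class, route every other state into that class by shortest-path (BFS) actions, and read off unichain-ness and gain $\alpha_p$ from almost-sure absorption. The differences are minor: you obtain the initial policy explicitly as a conserving policy for \eqref{eqn:wc_Bellman_eqn} and run the BFS over all of $S$ (justified by your $B=\varnothing$ argument), whereas the paper cites the existence of an optimal deterministic policy, runs the BFS only on $C_p$, and keeps $\bar\Delta$ on $C_p^c$, relying on transience there.
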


Assumption \ref{assump:wc_and_can_use_det} and Lemma \ref{lemma:wc_unichain_pi} implies that there exists an optimal policy $\Delta^*:S\ra \cQ$ such that \begin{equation}\label{eqn:choice_of_P0}
P_0(s'|s) := \sum_{a\in A}p^*(s'|s,a)\Delta^*(a|s)
\end{equation} is unichain. Moreover, one could pick $\Delta^*$ to be deterministic.  

\begin{remark}\label{rmk:irreducible}
Here, we do not restrict $\Delta^*$ to be deterministic. In particular, we will choose $\Delta^*$ so that the unique closed communicating class $C_0$ of $P_0$ is as large as possible. As discussed in Theorem \ref{thm:pistar_tv_bound}, it is beneficial if one can choose an optimal policy $\Delta^*$ such that $P_0$ is irreducible. In general, such a $\Delta^*$, if it exists, may need to be randomized. Indeed, one can construct weakly communicating average-reward MDP instances in which no deterministic optimal policy induces an irreducible Markov kernel on $S$, whereas a randomized optimal policy does induce an irreducible chain.

\end{remark}

Since we will implement an epoch-based strategy similar to that in Proposition \ref{prop:hp_RL_policy} to construct our $\pi^*\in\PiRL(\cQ,\cP)$, we update the definition of the mixture likelihood ratio process and rejection time with a starting time. Specifically, parallel to \eqref{eqn:def_mixture_LR} and \eqref{eqn:rej_time} and given $P_0$ in \eqref{eqn:choice_of_P0} and a product Dirichlet prior $\Pi$ as in Definition \ref{def:dirichlet-prior}, define the mixture likelihood process starting from time $m\geq 0$ as
\begin{equation}\label{eqn:def_mixture_LR_with_starting}
\Lambda_{m,n}
:=
\int \prod_{t=m}^{n-1}\frac{Q(X_{t+1}| X_t)}{P_0(X_{t+1}| X_t)} \Pi(dQ),
\end{equation} and rejection time  \begin{equation}\label{eqn:rej_time_starting}
\tau_{m,\rho}:=\inf\set{n\ge m+1: P_0(X_n| X_{n-1})=0\ \text{or}\ \Lambda_{m,n}\ge \frac1\rho}.
\end{equation}

We define the length and rejection parameter of the $j$th epoch as \begin{equation}\label{eqn:def_L_and_rho}
L_j := 2^j,
\quad
\rho_j := 2^{-\zeta j},
\quad j\ge 1.
\end{equation}
Here, $\zeta > 1$ is a tuning parameter. So, the starting time of the $j$th epoch is \begin{equation}\label{eqn:epoch_start_time}
    t_{j} = t_{j-1} + L_{j-1}
\end{equation}  for $j\ge 2$
with $t_1 = 0$.

\SetAlgorithmName{Policy}{policy}{List of Policies}
\begin{algorithm}[ht]
\caption{Intuitive description of policy $\pi^* $ with $O(1)$ transient value}
\label{policy:O1_TV_policy}
\DontPrintSemicolon

\KwIn{$\Delta^*$ and $P_0$ from \eqref{eqn:choice_of_P0}; product Dirichlet prior $\Pi$ with parameters $\gamma$;
reference RL policy $\pi_{\mathrm{RL}}$; epoch start times $\{t_j:j\ge 1\}$ from \eqref{eqn:epoch_start_time}; 
rejection parameters $\{\rho_j:j\ge 1\}$ from \eqref{eqn:def_L_and_rho}.}

Set the current time index {$t \gets 0$}

\For(\tcp*[f]{epochs}){$j \gets 1,2,\ldots$}{
    \Repeat(\tcp*[f]{test if $P = P_0$}){$P_0(X_{t}\mid X_{t-1})=0 \textbf{ or } \Lambda_{t_j,t} \ge 1/\rho_j$ \textbf{ or } $t = t_{j+1}$}{
    Choose action $A_{t} = \Delta^*\!\left(X_{t}\right)$ and transition to $X_{t+1}$\;
    Compute $\Lambda_{t_j,t+1}$ according to \eqref{eqn:def_mixture_LR_with_starting}\;
    Update time $t \gets t+1$\;
    }
    \If(\tcp*[f]{if reject before end of epoch, run RL}){$t< t_{j+1}$}{Run $\piRL$ until $t =  t_{j+1}$}
}

\end{algorithm}

The policy $\pi^*$ is informally outlined in Policy \ref{policy:O1_TV_policy}. It decomposes into epochs with lengths $\set{L_j:j\geq 1}$. Within each epoch, the policy has two phases: testing and RL. During the testing phase of the $j$th epoch, $\pi^*$ uses $\Delta^*$ to both explore and exploit, while computing the mixture likelihood ratio statistics and checking the rejection criterion.

If the rejection time is realized before time $t_{j+1}$, then the policy switches to the RL phase, in which it runs $\piRL$ until the end of the epoch, i.e., until $t=t_{j+1}$. Otherwise, the policy continues to use $\Delta^*$ until the end of the epoch.

\subsection{Transient Value Analysis}

In this section, we analyze the behavior of $\pi^*$. To verify the optimality of $\pi^*$ and study its transient value, we first provide a mathematically rigorous formulation of $\pi^*=(\pi_0^*,\pi_1^*,\ds)\in\PiH(\cQ)$, where $\pi_t^*:\bd H_t\ra\cQ$.

Define \begin{equation} \label{eqn:def_I_rej_indicator}I_{m,n,\rho}:= \1\set{\tau_{m,\rho}\leq n} .
\end{equation} Note that $(\Lambda_{m,n},I_{m,n,\rho}) $ are $\sigma(X_m,\ds,X_n)\subseteq  \cH_n$ measurable. So, through an abuse of notation, we consider them as measureable functions on $\bd H_n$; i.e, $(\Lambda_{m,n}(\omega),I_{m,n,\rho}(\omega)) = (\Lambda_{m,n}(h_n),I_{m,n,\rho}(h_n))$ where $h_n\in \bd H_n$ is such that $\omega = (s_0,a_0,\ds ,s_n,a_n,\ds) = (h_n,a_n,\ds)$. For each $n$, we also define the shift operator on $\bd H_n$ by $f_n^k(h_n) := (s_k,a_k,\ds,a_{n-1},s_n)$ for $k = 0,\ds ,n$

Following Policy \ref{policy:O1_TV_policy}, we define $\set{\pi^*_t:t\geq 0}$ as follows. For $t = t_j,\ds,t_{j+1}-1$ and any $h_t\in\bd H_t$ 
\begin{equation}\label{eqn:def_pi_star}
    \pi_t^*(\cd|h_t) = \begin{cases}
        \Delta^*(\cd|s_t), & \text{if}\ I_{t_j,t,\rho_j}(h_t) = 0;\\
        \piRL_{t-\tau }(\cd|f_t^{\tau}(h_t)), &\text{if}\ I_{t_j,t,\rho_j}(h_t) = 1;
    \end{cases}
\end{equation}
where $\tau = \tau_{t_j,\rho_j}(\omega_t)$ with $\omega_t = (h_t,a_t',s_{t+1}'\ds)\in\Omega$ for some $(a'_t,s_{t+1}'\ds)$. Note that, if $I_{t_j,t,\rho_j}(h_t) = 1$, then $\tau$ does not depend on $(a'_t,s_{t+1}'\ds)$. So, $\pi^*$ is indifferent to the choice of $(a'_t,s_{t+1}'\ds)$. 

With this definition, we present the main result of this section.

\begin{theorem}[Transient value bound for fixed-length epochs]\label{thm:pistar_tv_bound} Suppose Assumption \ref{assump:wc_and_can_use_det} is in force and $\cP^* = \set{p^*}$ is a singleton.  Construct $\pi^*$ as in \eqref{eqn:def_pi_star} with inputs specified in Policy \ref{policy:O1_TV_policy}. Also, assume that either of the following holds.
\begin{itemize}
    \item Identifiability: For $p\in\cP$ with $P = p_{\Delta^*}$, $P|_{C_0}\neq P_0|_{C_0}$. 
    \item Irreducibility: $P_0$ is irreducible. 
\end{itemize}

Then, for every \(\mu\in\cP(S)\) and $\zeta > 1$ used in \eqref{eqn:def_L_and_rho},
\begin{equation}\label{eqn:tv_lower_bound_main}
\mrm{TV}(\mu,\pi^*)
\ge
-\frac{2^{\zeta}}{2^{\zeta}-1}\spnorm{v^*}
-\frac{1}{2^{\zeta-1}-1},
\end{equation}
where $v^*$ is the unique (up-to a shift) solution to 
\begin{equation}\label{eqn:vstar_eval_eqn}
  v^*(s) =\sum_{a\in A} \Delta^*(a|s)r(s,a) - \alpha^* + \sum_{s'\in S}P_0(s'|s)v^*(s'). 
\end{equation}
In particular, for any $\epsilon > 0$, we can choose $\zeta$ sufficiently large so that $\mrm{TV}(\mu,\pi^*)\geq -\spnorm{v^*} - \epsilon$. 
\end{theorem}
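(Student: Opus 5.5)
The plan is to split the infimum over $p\in\cP$ in the definition of $\mrm{TV}(\mu,\pi^*)$ into the worst-case kernel $p=p^*$ and the remaining kernels $p\neq p^*$. For $p=p^*$, the bound \eqref{eqn:tv_lower_bound_main} will come from a telescoping argument via \eqref{eqn:vstar_eval_eqn}, with false rejections controlled by Theorem~\ref{thm:rej_time}. For $p\neq p^*$, I will show the liminf is either $+\infty$ or reduces to the $p^*$ computation.

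\textbf{Case $p=p^*$.} Since $\Delta^*$ is optimal for $p^*$ and $P_0$ is unichain, \eqref{eqn:vstar_eval_eqn} has a solution $v^*$ with gain $\alpha_{p^*}=\alpha^*$. Hence $M_t:=\sum_{k<t}[r(X_k,A_k)-\alpha^*]+v^*(X_t)$ is a martingale over any stretch on which $\Delta^*$ is played. While no rejection has occurred, the chain evolves exactly as $P_0$. Conditioning on $\cH_{t_j}$ and applying Proposition~\ref{prop:type-I-error} with initial law $\mathcal L(X_{t_j})$, the rejection probability in epoch $j$ is at most $\rho_j$. I would then write the cumulative sum up to $T$ as the telescoped term $v^*(X_0)-v^*(X_T)$ plus one correction per epoch in which a rejection occurs. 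Such a correction arises because on the RL stretch $[\tau,t_{j+1})$ the increments of $M$ are no longer mean zero. Using $r\in[0,1]$ and $\alpha^*\le1$, the reward shortfall on that stretch is at least $-L_j$, and re-entering the telescope at $t_{j+1}$ costs at most $\spnorm{v^*}$. Each epoch therefore contributes at least $-\rho_j(L_j+\spnorm{v^*})$, and summing gives
\[
E_\mu^{\pi^*,p^*}\sum_{t<T}[r(X_t,A_t)-\alpha^*]\ \ge\ -\spnorm{v^*}\Bigl(1+\sum_{j\ge1}2^{-\zeta j}\Bigr)-\sum_{j\ge1}2^{(1-\zeta)j},
\]
uniformly in $T$, including partial epochs. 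The two series evaluate to $\frac{2^\zeta}{2^\zeta-1}$ and $\frac{1}{2^{\zeta-1}-1}$, which is exactly \eqref{eqn:tv_lower_bound_main}.

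\textbf{Case $p\neq p^*$.} Since $\cP^*=\{p^*\}$, we have $\alpha_p>\alpha^*$. Let $P=p_{\Delta^*}$. If $P|_{C_0}=P_0|_{C_0}$, then by the hypotheses we must be in the irreducible case with $C_0=S$, so $P=P_0$. In that case the rewards depend only on the states and on $\Delta^*$ or $\piRL$, which run on identical laws until rejection. I will argue that the trajectory law under $(\pi^*,p)$ agrees with the one under $(\pi^*,p^*)$ on the testing phase; the same telescoping bound then applies, because the RL-phase crude bound $-L_j$ did not use $p^*$.

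Otherwise, Theorem~\ref{thm:rej_time}(2) applies, again with initial law $\mathcal L(X_{t_j})$, since its constant $K$ does not depend on $\mu$. It gives $E\,(\tau_{t_j,\rho_j}-t_j)\le K(\zeta j\log2+1)=O(j)$. After rejection $\piRL$ runs from a random state. Because $S$ is finite, $E_\nu R_{r,p}(k)=\sum_s\nu(s)E_sR_{r,p}(k)$, so the regret bound $g(k)=o(k)$ holds uniformly over initial laws. Moreover $\sup_{k\le L}g(k)=o(L)$, which covers partial epochs.

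Hence the cumulative sum through any time $T$ in epoch $J$ is at least
\[
(\alpha_p-\alpha^*)\,T-\sum_{j\le J}\bigl(O(j)+o(L_j)\bigr)-O(J)-o(L_J).
\]
Here the $O(j)$ terms cover the pre-rejection deficit and the tail of $\tau$, handled via $E\tau$ and the crude bound $-1$ per step. This lower bound tends to $+\infty$, so such $p$ do not affect the infimum.

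The main obstacle is this second case, which needs two things. First, a careful statement that the online-RL guarantee of Definition~\ref{def:online_RL} holds uniformly over random restart states and prefixes. Second, the bookkeeping for the events where rejection never occurs within a short epoch, which I would control by Markov's inequality on $\tau$. A secondary subtlety is justifying the conditional application of Theorem~\ref{thm:rej_time} at the random time $t_j$, via the Markov property of the $\Delta^*$-driven chain together with the independence of the test from pre-$t_j$ data. Finally, letting $\zeta\to\infty$ gives the last claim of the theorem.
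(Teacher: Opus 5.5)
Your proposal is correct and follows the paper's proof. It uses the same split on $P=p_{\Delta^*}$, with the case $P\neq P_0$, $P|_{C_0}=P_0|_{C_0}$ excluded by the hypotheses. When $P=P_0$, it telescopes via $v^*$ and combines the per-epoch type-I bound $\rho_j$ with the crude $-L_j$ bound on RL phases; otherwise, it uses the $O(\log(1/\rho_j))=O(j)$ expected rejection delay plus the linear gain of $\piRL$. The only differences are bookkeeping: you telescope globally with a per-rejected-epoch correction rather than regrouping into the $\Delta^*$-cycles $[\chi_k',\chi_k)$, and you use the uniform $o(k)$ regret of $\piRL$ directly in place of Lemma~\ref{lemma:RL_tv_lb}, and both give the same conclusions.
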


\begin{remark}
    It is possible to achieve an $O(1)$ transient value without the identifiability or irreducibility assumptions. In this case, one can consider a more complex policy with an additional test statistic that rejects when the chain is not absorbed into $C_0$ quickly enough. However, without the identifiability or irreducibility assumptions, we believe it may not be possible to obtain a clean $-O(\spnorm{v^*})$ lower bound on the transient value, since weakly communicating alternative kernels with $C_0$ as their only closed communicating class under $\Delta^*$ can have arbitrarily large hitting times to $C_0$. So, the maximum expected hitting time could also enter the lower bound. 

    Therefore, in this paper, we opt to introduce these additional assumptions in order to obtain the clean result in Theorem \ref{thm:pistar_tv_bound}, with explicit bounds and clear algorithm design intuition.

    The assumption on the uniqueness of $p^*$ could potentially be removed, while maintaining an $O(1)$ transient value, if $|\cP^*|<\infty$ and we tailor the policy design to the specific adversarial environments $p^*\in\cP^*$. For instance, \citet{bubeck2013bounded_regret} shows that, in a multi-arm bandit setting where the adversarial environments are permutations of the same set of arm probabilities, i.e., $\cP^*=\cP$ is finite, there exists a policy that achieves $O(1)$ regret. However, this cannot be generalized to more general multi-arm bandit settings with continuum $\cP$, as evidenced by the classical $O(\log T)$ regret lower bound established by \citet{lai1985asymptotically}; see also Theorems 6 and 8 in \citet{bubeck2013bounded_regret}.

\end{remark}

\subsubsection{Proof of Theorem \ref{thm:pistar_tv_bound}}

\begin{proof}

We first note that, with unichain $P_0$, the existence and up-to-shift uniqueness of $v^*$ is standard for finite-state unichain average-reward models; see, e.g., \citep[Chs.~8--9]{puterman2014MDP}.

Recall the definition of transient value in \eqref{eqn:weighted_TV}. Within the setting of Theorem \ref{thm:pistar_tv_bound}, $\alpha^*(\mu) = \alpha^*$ for all $\mu\in\cP(\cS)$. So, for any fixed $p\in\cP$, we consider the pre-limit quantity
\begin{equation}\label{eqn:to_use_pre_lim_TV}
    E_\mu^{\pi^*,p} \sum_{t=0}^{T-1}[r(X_t,A_t) -\alpha^*]
\end{equation}

Fix an arbitrary adversarial kernel $p\in\cP$, denote
$P(s'|s) = p(s'|s,\Delta^*(s)), \quad s,s'\in S$.  
There are three disjoint cases: (1) $P = P_0$; (2) $P\neq P_0$, $P|_{C_0} \neq P_0|_{C_0}$; (3) $P\neq P_0$, $P|_{C_0} = P_0|_{C_0}$.
Clearly, the three cases cover any $p\in\cP$.  

Note that case 3 cannot happen if the identifiability condition hold. Moreover, if $P_0$ is irreducible,  $C_0 = S$ and  $P|_{C_0} = P_0|_{C_0}$, then we must have $P_0 = P$. So, under either identifiability or irreducibility, case 3 cannot happen. Therefore, we only need to analyze \eqref{eqn:to_use_pre_lim_TV} in the first two cases. 

For simplicity, let us denote the ending time of the $j$th epoch by $e_j = t_{j+1}-1$

\noindent\textbf{Case 1: $P = P_0$.}

\begin{lemma}\label{lemma:stopped_TV_sum_bd}
Let $\theta$ be a bounded $\cH_t$-stopping time, $P = P_0$, and $v^*$ be defined by \eqref{eqn:vstar_eval_eqn}. Then with $P = P_0$ for every $\mu\in\cP(\cS)$,
\begin{equation}\label{eqn:stopped_sum_identity}
E_\mu^{\Delta^*,p}\sqbk{\sum_{t=0}^{\theta-1}\crbk{r(X_t,A_t)-\alpha^*}}
=
E_\mu^{\Delta^*,p}\sqbk{v^*(X_0)-v^*(X_\theta)}.
\end{equation}
In particular,
\begin{equation}\label{eqn:stopped_sum_span_bd}
E_\mu^{\Delta^*,p}\sqbk{\sum_{t=0}^{\theta-1}\crbk{r(X_t,A_t)-\alpha^*}}
\ge -\spnorm{v^*}.
\end{equation}
\end{lemma}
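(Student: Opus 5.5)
The plan is a martingale argument built from the Poisson equation \eqref{eqn:vstar_eval_eqn}, followed by optional stopping.

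Under $E_\mu^{\Delta^*,p}$ the action at time $t$ is drawn from $\Delta^*(\cdot|X_t)$ and the next state from $p(\cdot|X_t,A_t)$. The averaged one-step kernel is therefore $P = p_{\Delta^*} = P_0$. Introduce the centered increments
\[
D_{t+1} := r(X_t,A_t) - \alpha^* + v^*(X_{t+1}) - v^*(X_t),
\]
and their partial sums $M_n := \sum_{t=0}^{n-1} D_{t+1}$, with $M_0 := 0$.

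The first step is to show that $E_\mu^{\Delta^*,p}[D_{t+1}\mid\cH_t] = 0$. Conditional on $\cH_t$, the action satisfies $A_t\sim\Delta^*(\cdot|X_t)$, so
\[
E[r(X_t,A_t)\mid \cH_t] = \sum_a \Delta^*(a|X_t)\, r(X_t,a).
\]
Integrating over $A_t$ and then over $X_{t+1}$ gives
\[
E[v^*(X_{t+1})\mid\cH_t] = \sum_{s'} P_0(s'|X_t)\, v^*(s').
\]
Substituting both expressions into \eqref{eqn:vstar_eval_eqn} at $s = X_t$ yields exactly zero. Hence $\{M_n\}$ is a martingale with respect to the filtration generated by $(X_0,A_0,\dots,X_n)$, and its increments are bounded because $r\in[0,1]$ and $v^*$ is finite on the finite set $S$.

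The only delicate point is the filtration. The increment $D_{t+1}$ involves $A_t$, which is not $\cH_t$-measurable, but the martingale property only requires $M_n$ to be $\cH_n$-measurable, and it is since $\cH_n$ contains $A_0,\dots,A_{n-1}$. Once this is checked, $\theta$ is a bounded $\cH_t$-stopping time, and optional stopping for bounded stopping times gives $E M_\theta = 0$. Expanding the telescoping sum inside $M_\theta$ then produces \eqref{eqn:stopped_sum_identity}.

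For \eqref{eqn:stopped_sum_span_bd}, note that the right-hand side of \eqref{eqn:stopped_sum_identity} is the expectation of $v^*(X_0)-v^*(X_\theta)$. Pointwise this difference is at least $\min_s v^* - \max_s v^* = -\spnorm{v^*}$, and taking expectations gives the bound.
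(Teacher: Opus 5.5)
Your proposal is correct and follows the paper's proof. The paper uses the martingale $M_n = v^*(X_n)+\sum_{t=0}^{n-1}(r(X_t,A_t)-\alpha^*)$, which differs from yours only by the $\cH_0$-measurable term $v^*(X_0)$. It checks zero drift from \eqref{eqn:vstar_eval_eqn} and $P=P_0$ as you do, applies optional stopping at the bounded time $\theta$, and then uses the pointwise bound $v^*(X_0)-v^*(X_\theta)\ge -\spnorm{v^*}$. Your remark that $M_n$ is $\cH_n$-measurable even though $D_{t+1}$ involves $A_t$ is a valid check that the paper leaves implicit.
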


First, we note that within each epoch, 
$$E_\mu^{\pi^*,p} \sum_{t=t_j}^{e_j}[r(X_t,A_t) - \alpha^*] 
\ge E_\mu^{\pi^*,p} \sum_{t=t_j}^{(\tau_{t_j,\rho_j}- 1)\wedge e_j }[r(X_t,A_t) - \alpha^*] - \underbrace{L_jP_\mu^{\pi^*,p}(\tau_{t_j,\rho_j} \leq e_j)}_{:=\psi_j}$$

Let $ J = \max\set{j\ge1:t_j\leq T}$. 
\begin{equation}\label{eqn:to_bd_pre_lim_TV}
    \begin{aligned}
    &E_\mu^{\pi^*,p} \sum_{t=0}^{T-1}[r(X_t,A_t) - \alpha^*]\\
    &\ge  \sum_{j=1}^{J-1}\crbk{E_\mu^{\pi^*,p} \sum_{t=t_j}^{(\tau_{t_j,\rho_j}-1)\wedge e_j}[r(X_t,A_t) - \alpha^*] - \psi_j}\\
    &\quad + E_\mu^{\pi^*,p}\sqbk{\sum_{t=t_{J}}^{(\tau_{t_{J},\rho_{J}}-1)\wedge (T-1)} [r(X_t,A_t) - \alpha^*] }- \psi_J\\
    &\geq E_\mu^{\pi^*,p} \sqbk{ \sum_{j=1}^{J}\sum_{t=t_j}^{(\tau_{t_j,\rho_j}\wedge t_{j+1}\wedge  T)- 1}[r(X_t,A_t) - \alpha^*]}-\sum_{j=1}^J \psi_j
\end{aligned} 
\end{equation}

To lower bound the last line of \eqref{eqn:to_bd_pre_lim_TV} we first start with bounding $\psi_j$.  Note that for $t_{j}\leq t\leq e_j$ and any $h'_{t_j} = (s_0',a_0',\ds,s_{t_j}')\in\bd H_{t_j}$.
\begin{equation}\label{eqn:to_ref_epoch_quantity_derivation}\begin{aligned}
&P_\mu^{\pi^*,p} \crbk{\tau_{t_j,\rho_j} > e_j,H_{t_j} = h'_{t_j}} \\
&= \sum_{h_{e_j} \in \bd H_{e_j}}\1\set{h_{t_j}= h'_{t_j},I_{t_j,e_j,\rho_j}(h_{e_j}) = 0}\mu(s_0)\prod_{t=0}^{e_j-1}\pi_t^*(a_t|h_t)p(s_{t+1}|s_t,a_t)\\
&= \sum_{h_{e_j} \in \bd H_{e_j}}P_{\mu}^{\pi^*,p}(H_{t_j} = h_{t_j})\1\set{h_{t_j}= h'_{t_j},I_{t_j,e_j,\rho_j}(h_{e_j}) = 0}\prod_{t=t_j}^{e_j-1}\pi_t^*(a_t|h_t)p(s_{t+1}|s_t,a_t)\\ 
&\stackrel{(i)}{=} P_{\mu}^{\pi^*,p}(H_{t_j} = h_{t_j}')\sum_{h_{e_j} \in \bd H_{e_j}}\1\set{h_{t_j}= h'_{t_j},I_{t_j,e_j,\rho_j}(h_{e_j}) = 0}\prod_{t=t_j}^{e_j-1}\Delta^*(a_t|s_t)p(s_{t+1}|s_t,a_t)\\ 
&\stackrel{(ii)}{=} P_{\mu}^{\pi^*,p}(H_{t_j} = h_{t_j}')\sum_{s_{t_j},a_{t_j},\ds,s_{e_j}}\1\set{I_{t_j,e_j,\rho_j}((h'_{t_j},a_{t_j},\ds,s_{e_j})) = 0}\delta_{s_{t_j}'}(s_{t_j})\prod_{t=t_j}^{e_j-1}\Delta^*(a_t|s_t)p(s_{t+1}|s_t,a_t)\\ 
&\stackrel{(iii)}{=} P_{\mu}^{\pi^*,p}(H_{t_j} = h'_{t_j})P^{P_0}_{s_{t_j}'}(\tau_{\rho_j}> L_j-1)
\end{aligned}
\end{equation}
where we use the following simplified notation that $h_{e_j} = (h_t,a_t,\ds ,s_{t_{j+1} -1})$ for all $t\leq e_j$. Note that $(i)$ follows from the facts that
\begin{itemize}
    \item $I_{t_j,e_j,\rho_j}(h_{e_j}) = 0$ implies $I_{t_j,t,\rho_j}(h_t) = 0$ for all $t\leq e_j$; i.e., if rejection didn't happen before $e_j$ then it didn't happen before any $t\leq e_j$. 
    \item $\pi^*_t$ defined by \eqref{eqn:def_pi_star} satisfies $\1\set{I_{t_j,t,\rho_j}(h_{t}) = 0}\pi^*_t(a_t|h_t) = \1\set{I_{t_j,t,\rho_j}(h_{t}) = 0}\Delta^*(a_t|h_t)$,
\end{itemize} 
$(ii)$ follows from summing over $s_0,a_0\ds,s_{e_{j-1}},a_{e_{j-1}}$, and $(iii)$ reverts the first equality and notes that the state process has transition kernel $P(s'|s) = P_0(s'|s) =  \sum_{a\in A}p(s'|s,a)\Delta^*(a|s)$. This implies that
\begin{equation}\label{eqn:to_use_cond_rej_in_epoch_prob}
    P_\mu^{\pi^*,p} \crbkcond{\tau_{t_j,\rho_j} > e_j}{\cH_{t_j}} = P_{X_{t_j}}^{P_0}(\tau_{\rho_j} > L_j-1)
\end{equation}
a.s.$P_\mu^{\pi^*,p}$. 

Therefore, by \eqref{eqn:to_use_cond_rej_in_epoch_prob} and Theorem \ref{thm:rej_time}, 
\begin{equation}\label{eqn:to_use_rej_in_epoch_prob_bd}\begin{aligned}
\psi_j&=L_jP_\mu^{\pi^*,p}(\tau_{t_j,\rho_j} \leq e_j) \\
&= L_jE_\mu^{\pi^*,p}P_\mu^{\pi^*,p} \crbkcond{\tau_{t_j,\rho_j}\leq e_j}{\cH_{t_j}}\\
&\leq L_jE_\mu^{\pi^*,p}P_{X_{t_j}}^{P_0} \crbk{\tau_{\rho_j}< \infty}\\
&\leq L_j\rho_j.
\end{aligned}
\end{equation}

For simplicity, let us define for all $j\leq J$
\begin{equation}\label{eqn:def_T_j}
T_j = \tau_{t_j,\rho_j}\wedge t_{j+1}\wedge T.
\end{equation}
Going back to \eqref{eqn:to_bd_pre_lim_TV}, we note that the sum
$$ \sum_{j=1}^{J}\sum_{t=t_j}^{(\tau_{t_j,\rho_j}\wedge t_{j+1}\wedge  T)- 1}[r(X_t,A_t) - \alpha^*] = \sum_{j=1}^{J}\sum_{t=t_j}^{T_j- 1}[r(X_t,A_t) - \alpha^*]$$
can be regrouped into cycles, where each cycle represents the period between the start-of-the-previous and the end-of-the-next deployments of $\Delta^*$ $t = 0,\ds,T-1$. Specifically, 
we define $\chi_1'= 0$
$$ \chi_k := \inf\set{t > \chi_{k}': t = \tau_{t_j,\rho_j} < e_j },\quad \chi_{k+1}' := \inf\set{t_j > \chi_{k}: j\geq 2 },\quad k\ge 1$$ representing the $k$th time that $\pi^*$ deploys and ends $\piRL$. It is not hard to see that, when finite, $\chi_k'<\chi_k  < \chi_{k+1}'$ and $\chi_k'\geq t_k$. Also, they are $\cH_t$-stopping times, as $\1\set{\chi_{k} \leq t}$ and $ \1\set{\chi_{k}' \leq t}$ are determined by $(s_0,s_1,\ds,s_t)$. Moreover, 
$$ \sum_{j=1}^{J}\sum_{t=t_j}^{T_j- 1}[r(X_t,A_t) - \alpha^*]  = \sum_{k=1}^{J} \sum_{t=\chi_k'}^{(\chi_{k}\wedge T)-1}[r(X_t,A_t) - \alpha^*].$$

Therefore, 
$$\begin{aligned}
        &E_\mu^{\pi^*,p}\sqbk{\sum_{t=\chi_k'}^{(\chi_{k}\wedge T)-1}[r(X_t,A_t) - \alpha^*]}\\
        &=  E_\mu^{\pi^*,p}\sqbk{\1\set{\chi_k'\leq t_J}\sum_{t=\chi_k'}^{(\chi_{k}\wedge T)-1}[r(X_t,A_t) - \alpha^*]}\\
        &= E_\mu^{\pi^*,p}\1\set{\chi'_k\leq t_J}E_\mu^{\pi^*,p}\sqbkcond{\sum_{t=\chi_k'}^{(\chi_{k}\wedge T)-1}[r(X_t,A_t) - \alpha^*]}{\cH_{\chi'_k}}\\
        &= E_\mu^{\pi^*,p}\sqbk{\1\set{\chi'_k\leq t_J}g(X_{\chi_k'},T-\chi_k') }
    \end{aligned}
$$
where, by Lemma \ref{lemma:stopped_TV_sum_bd},
$$g(x,y):=E_{x}^{\Delta^*,p}{\sum_{t=0}^{(\chi_1\wedge y)-1}[r(X_t,A_t) - \alpha^*]}\geq -\spnorm{v^*}.$$
This is because, by the definition of $\pi^*$, in between the times $\chi'_k$ and $\chi_{k}$, the controlled Markov chain $(X,A)$ under $E_\mu^{\pi^*,p}$ has the same action and transition probabilities as that under $(\Delta^*,p)$. 

Let $N_T$ denote the number of rejections before time $T$. Then, 
$$ \sum_{k=1}^{J}\1\set{\chi_k'\leq t_J}\le  1 +\sum_{j=1}^J\1\set{ \tau_{t_j,\rho_j} < e_j}  = 1+N_T.$$ Hence, 
\begin{equation}\label{eqn:to_use_prelim_E_TV_bd}
\begin{aligned}
E_\mu^{\pi^*,p}\sqbk{\sum_{k=1}^{J}\sum_{t=\chi_k'}^{(\chi_{k}\wedge T)-1}[r(X_t,A_t) - \alpha^*]}&\geq -\spnorm{v^*}E_\mu^{\pi^*,p}[N_T+1]\\
    &= -\spnorm{v^*} - \spnorm{v^*}\sum_{i=1}^JP_\mu^{\pi^*,p}(\tau_{t_j,\rho_j} < e_j)\\
    &\geq -\spnorm{v^*} - \spnorm{v^*}\sum_{j=1}^J E_\mu^{\pi^*,p} P_\mu^{\pi^*,p}(\tau_{t_j,\rho_j}  \leq  e_j|\cH_{t
    _j})\\
    &\geq -\spnorm{v^*}\crbk{1+\sum_{j=1}^J\rho_j}
\end{aligned}
\end{equation}
where the last inequality follows from \eqref{eqn:to_use_cond_rej_in_epoch_prob} and \eqref{eqn:to_use_rej_in_epoch_prob_bd}. 

Therefore, combining \eqref{eqn:to_bd_pre_lim_TV},  \eqref{eqn:to_use_rej_in_epoch_prob_bd}, and \eqref{eqn:to_use_prelim_E_TV_bd}, we conclude that 
\begin{equation}\label{eqn:to_use_P_eq_P_0_TV_nd}
\begin{aligned}
    E_\mu^{\pi^*,p} \sum_{t=0}^{T-1}[r(X_t,A_t) - \alpha^*] &\geq -\spnorm{v^*}\crbk{1+\sum_{i=1}^J\rho_j} - \sum_{i=1}^{J}\psi_j\\
    &\geq  -\spnorm{v^*}\crbk{1+\sum_{i=1}^\infty\rho_j} - \sum_{i=1}^{\infty}L_j\rho_j\\
    &\geq -\spnorm{v^*}\crbk{1+\sum_{i=1}^\infty 2^{-\zeta j}} - \sum_{i=1}^{\infty}2^{-(\zeta-1)j}\\
    &= -\frac{2^{\zeta}}{2^{\zeta}-1}\spnorm{v^*}
-\frac{1}{2^{\zeta-1}-1}. 
\end{aligned}
\end{equation}

\noindent\textbf{Case 2: $P\neq P_0$, but $P|_{C_0} \neq P_0|_{C_0}$. }

In this case, we will show that 
\begin{equation}\label{eqn:to_show_infty_TV}
    \liminf_{T\ra\infty}E_\mu^{\pi^*,p}\sum_{t=0}^{T-1}\sqbk{r(X_t,A_t)-\alpha^*}  = \infty
\end{equation}

We introduce the following lemma that describes an important property of the RL policy when $p\neq p^*$. The proof of Lemma \ref{lemma:RL_tv_lb} is deferred to Appendix \ref{sec:proof:lemma:RL_tv_lb}. 
\begin{lemma}\label{lemma:RL_tv_lb}
There exists $T_0$ such that for all $s\in S$,  $$E_s^{\piRL,p}\sum_{t=0}^{T-1}[r(X_t,A_t) - \alpha^*] \geq \frac{(\alpha_p-\alpha^*)T}{2} - 2T_0.$$
\end{lemma}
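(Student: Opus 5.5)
The plan is to convert the sublinear-regret property of $\piRL$ into a linear lower bound on cumulative reward, using that $\alpha_p>\alpha^*$. Here $p\in\cP$ is the fixed kernel of Case 2. Since $\cP^*=\{p^*\}$ is a singleton and $p\neq p^*$ (because $P\neq P_0$), we have $\alpha_p>\alpha^*$. Under Assumption \ref{assump:wc_and_can_use_det}, $\alpha_p$ is state-independent.

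We first decompose the cumulative reward relative to $\alpha^*$:
\[
E_s^{\piRL,p}\sum_{t=0}^{T-1}[r(X_t,A_t)-\alpha^*] = (\alpha_p-\alpha^*)T - E_s^{\piRL,p}R_{r,p}(T),
\]
which holds for every $s\in S$. It therefore suffices to find $T_0$ such that, for all $s\in S$ and all $T\ge 1$,
\[
E_s^{\piRL,p}R_{r,p}(T)\le \frac{(\alpha_p-\alpha^*)T}{2}+2T_0 .
\]

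To get this bound, we apply Definition \ref{def:online_RL} with $\mu=\delta_s$ for each $s\in S$. This gives $\limsup_T T^{-1}E_s^{\piRL,p}R_{r,p}(T)=0$, so for each $s$ there is $T_s$ with
\[
E_s^{\piRL,p}R_{r,p}(T)\le \frac{(\alpha_p-\alpha^*)T}{2}\quad\text{for all } T\ge T_s .
\]
Because $S$ is finite, we can set $T_0:=\max_s T_s<\infty$, which is uniform in the initial state. This uniformity is the only delicate point: Definition \ref{def:online_RL} applies pointwise in $\mu$, and finiteness of $S$ is exactly what lets us take a single maximum. (Alternatively, since $E_\mu$ is linear in $\mu$, bounding each point mass suffices for every initial distribution.)

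It remains to handle $T<T_0$. There we use the trivial bound $R_{r,p}(T)\le \alpha_pT\le T< T_0\le 2T_0$, valid since $r\ge 0$ and $\alpha_p\le 1$. Combining the two ranges of $T$ gives the claimed inequality for all $T$. In fact the argument yields the bound with $T_0$ in place of $2T_0$, so the stated form with $2T_0$ follows a fortiori.
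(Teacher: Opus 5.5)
Your proposal is correct and follows essentially the paper's argument. You apply Definition~\ref{def:online_RL} at each point mass $\delta_s$ to get a threshold beyond which the cumulative deviation is at least $(\alpha_p-\alpha^*)T/2$, use a trivial bound for smaller $T$, and take the maximum threshold over the finite state space. Your observation that this actually gives the bound with $T_0$ in place of $2T_0$ is also right; the paper gets $2T_0$ from its cruder small-$T$ estimate $r-\alpha^*\ge -1$.
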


We now lower bound the pre-limit transient value. Recall the definition of $T_j$ in \eqref{eqn:def_T_j}. We observe that
\begin{equation}\label{eqn:to_use_case2_TV_epoch_decomp}
\begin{aligned}
&E_\mu^{\pi^*,p}\sum_{t=0}^{T-1}\sqbk{r(X_t,A_t)-\alpha^*}\\
&=
\sum_{j=1}^{J}E_\mu^{\pi^*,p}\sqbk{\sum_{t=t_j}^{T_j-1}\sqbk{r(X_t,A_t)-\alpha^*} + \sum_{t=T_j}^{ (t_{j+1}\wedge T)-1}\sqbk{r(X_t,A_t)-\alpha^*}}
\end{aligned}
\end{equation}

We separately analyze the two terms within the expectation. First, note that for $j\leq J$,
$$\sum_{t=t_j}^{T_j-1}\sqbk{r(X_t,A_t)-\alpha^*}  \geq -(T_j- t_j).$$
Since $T_j = \tau_{\rho_j}\wedge t_{j+1}\wedge T$ is bounded, by a similar derivation as in \eqref{eqn:to_ref_epoch_quantity_derivation},
\begin{equation}\label{eqn:to_use_epoch_j_E_tau_bd}
    E_\mu^{\pi^*,p}\sqbkcond{T_j}{\cH_{t_j}}= E_{X_{t_j}}^{\Delta^*,p}[\tau_{\rho_j}\wedge L_j \wedge( T-t_j)] + t_j \leq K\crbk{\log\frac{1}{\rho_j} + 1}+t_j
\end{equation}
a.s.$P_\mu^{\pi^*,p}$ for some $K$ that only depend on $(P,P_0,\gamma)$. Noting that in this case $P|_{C_0}\neq P_0|_{C_0}$, the last inequality in \eqref{eqn:to_use_epoch_j_E_tau_bd} follows from Theorem \ref{thm:rej_time}. Therefore, taking another expectation on \eqref{eqn:to_use_epoch_j_E_tau_bd}, we obtain
\begin{equation}\label{eqn:to_use_before_RL_bd}
\begin{aligned}E_\mu^{\pi^*,p}\sum_{t=t_j}^{T_j-1}\sqbk{r(X_t,A_t)-\alpha^*}&\geq -K\crbk{\log\frac{1}{\rho_j}+1}. 
\end{aligned}
\end{equation}

Next, by the construction of $\pi^*$ in  \eqref{eqn:def_pi_star} and a similar argument as in \eqref{eqn:to_ref_epoch_quantity_derivation}, for all $j\leq J$ and $\omega' = (s_0',a_0',s_1,'\ds)\in\Omega$, 
$$\begin{aligned}
&E_\mu^{\pi^*,p}\sqbk{\1\set{H_{T_j} = H_{T_j(\omega')}(\omega')}\sum_{t=T_j}^{ (t_{j+1}\wedge T)-1}\sqbk{r(X_t,A_t)-\alpha^*}}\\
&= P_\mu^{\pi^*,p}\crbk{H_{T_j} =H_{T_j(\omega')}(\omega')} E_{{s'}_{T_j(\omega')}}^{\piRL,p}\sqbk{\sum_{t=0}^{ (t_{j+1}\wedge T) - T_j(\omega')-1}\sqbk{r(X_t,A_t)-\alpha^*}}. 
\end{aligned}$$
Here, we note that $T_j = \tau_{\rho_j,t_j}\wedge t_{j+1}\wedge T$ is bounded, so $P_\mu^{\pi^*,p}\crbk{H_{T_j} = H_{T_j(\omega')}(\omega')} > 0$. Thus, 
\begin{align*}E_\mu^{\pi^*,p}\sqbkcond{\sum_{t=T_j}^{ (t_{j+1}\wedge T)-1}\sqbk{r(X_t,A_t)-\alpha^*}}{\cH_{T_j}} &= E_{{X}_{T_j}}^{\piRL,p}\sqbk{\sum_{t=0}^{ (t_{j+1}\wedge T)- y-1}\sqbk{r(X_t,A_t)-\alpha^*}}\Bigg|_{y = T_j}\\
&\geq   \frac{(\alpha_p-\alpha^*)(t_{j+1}\wedge T - T_j)}{2} - 2T_0. 
\end{align*}

Taking expectation, we obtain
\begin{equation}\label{eqn:to_use_cycle_RL_period}
\begin{aligned}
E_\mu^{\pi^*,p}\sum_{t=T_j}^{ (t_{j+1}\wedge T)-1}\sqbk{r(X_t,A_t)-\alpha^*} &\ge\frac{(\alpha_p-\alpha^*)(t_{j+1}\wedge T - E_\mu^{\pi^*,p} T_j)}{2} - 2T_0\\
&\geq \frac{(\alpha_p-\alpha^*)}{2} (t_{j+1}\wedge T - t_j)- \frac{(\alpha_p-\alpha^*)}{2}K\crbk{\log \frac{1}{\rho_j}+1} - 2T_0
\end{aligned}
\end{equation}
where the last inequality follows from \eqref{eqn:to_use_epoch_j_E_tau_bd}. 

Therefore, combining \eqref{eqn:to_use_case2_TV_epoch_decomp}, \eqref{eqn:to_use_before_RL_bd}, \eqref{eqn:to_use_cycle_RL_period}, and recall that $\rho_j = 2^{-\zeta j}$, we conclude that 
\begin{align*}&E_\mu^{\pi^*,p}\sum_{t=0}^{T-1}\sqbk{r(X_t,A_t)-\alpha^*} \\
&\geq -2T_0 J+  \frac{(\alpha_p-\alpha^*)}{2}\crbk{\sum_{j=1}^J(t_{j+1}\wedge T - t_j)  } -\crbk{\frac{(\alpha_p-\alpha^*)}{2} + 1}K\crbk{1+ \sum_{j=1}^J\zeta j\log (2) }\\
&\geq -2T_0J +  \frac{(\alpha_p-\alpha^*)}{2}T - K\crbk{2+ \log (2)\zeta J(J+1) }
\end{align*}
where the last inequality used that $0 < \alpha_p - \alpha^*\leq 2$. 
Recall that $J = \max\set{j\ge 1:t_j\leq T}$. So, by \eqref{eqn:epoch_start_time} $$T>t_{J} = \sum_{j=1}^{J-1}L_j = 2^{J}-2$$
So, $J = O(\log_2 (T+2))$. Therefore, 
\begin{align*}\liminf_{T\ra\infty}E_\mu^{\pi^*,p}\sum_{t=0}^{T-1}\sqbk{r(X_t,A_t)-\alpha^*} \geq   \liminf_{T\ra\infty}\sqbk{ \frac{(\alpha_p-\alpha^*)}{2}T-O(\log_2^2(T+2))} = +\infty,
\end{align*}
showing \eqref{eqn:to_show_infty_TV}. 

\textbf{Concluding Theorem \ref{thm:pistar_tv_bound}: }

To conclude the proof of Theorem \ref{thm:pistar_tv_bound},
we recall that $\set{p\in\cP: P \neq  P_0, P|_{C_0} = P_{0|C_0}} = \varnothing$. So,
\begin{align*}\mrm{TV}(\mu,\pi^*)&= \inf_{p\in\cP}\liminf_{T\ra\infty}E_\mu^{\pi^*,p}\sum_{t=0}^{T-1}\sqbk{r(X_t,A_t)-\alpha^*} \\
&=\min\bigg\{ \inf_{p\in\cP, P = P_0}\liminf_{T\ra\infty}E_\mu^{\pi^*,p}\sum_{t=0}^{T-1}\sqbk{r(X_t,A_t)-\alpha^*},\\
&\qquad\quad \quad \inf_{p\in\cP: P \neq  P_0, P|_{C_0} \neq P_{C_0}}\liminf_{T\ra\infty}E_\mu^{\pi^*,p}\sum_{t=0}^{T-1}\sqbk{r(X_t,A_t)-\alpha^*}\bigg\}\\
&\stackrel{(i)}{=} \min\set{-\frac{2^{\zeta}}{2^{\zeta}-1}\spnorm{v^*}
-\frac{1}{2^{\zeta-1}-1},+\infty}\\
&= -\frac{2^{\zeta}}{2^{\zeta}-1}\spnorm{v^*}
-\frac{1}{2^{\zeta-1}-1}. 
\end{align*}
where $(i)$ follows from \eqref{eqn:to_use_P_eq_P_0_TV_nd} and \eqref{eqn:to_show_infty_TV}. This completes the proof.
\end{proof}

\subsection*{Acknowledgments}
We thank Julien Grand-Cl{\'e}ment for valuable input and comments. Nian Si’s research is supported in part by the Early Career Scheme [Grant 	26210125] from the Hong Kong Research Grants Council. Shengbo Wang would like to thank the Isaac Newton Institute for Mathematical Sciences, Cambridge, for support and hospitality during the programme Bridging Stochastic Control 
And Reinforcement Learning, where work on this paper was undertaken. This work was supported by EPSRC grant EP/V521929/1.

\bibliographystyle{apalike}
\bibliography{proof_bibs,DR_MDP,mdps}

\newpage
\appendix
\appendixpage
\section{Proof of Auxiliary Lemmas}
\subsection{Proof of Lemma \ref{lemma:exist_worst_case_kernel_WC}}\label{sec:proof:lemma:exist_worst_case_kernel_WC}
\begin{proof}
Recall that under weak communication, \(\alpha_p(s)=\alpha_p\) for all \(s\in S\), where \((\alpha_p,v_p)\) solves \eqref{eqn:wc_Bellman_eqn} and $\alpha_p$ is unique. 
Moreover, by Corollary \ref{cor:piZX_RL},
$\alpha^*(\mu)=\alpha^*=\inf_{p\in\cP}\alpha_p,
$ for all $\mu\in\cP(\cS).$
Hence, since \(\cP\) is compact, it suffices to prove that \(p\ra \alpha_p\) is continuous on \(\cP\).

For \(p,q\in\cP\), define
\[
d(p,q):=\max_{s\in S,a\in A} d_{\mrm{TV}}\bigl(p(\cdot|s,a),q(\cdot|s,a)\bigr),
\]
and, for \(v:S\to\R\),
\[
\cB_p[v](s):=\max_{a\in A}\set{r(s,a)+\sum_{s'\in S}p(s'|s,a)v(s')}.
\]
Since
\[
\abs{\sum_{s'\in S}[q(s'|s,a)-p(s'|s,a)]v(s')}
\le d(p,q)\spnorm{v},
\]
we have, for every \(s\in S\),
\[
\cB_q[v_p](s)\le \cB_p[v_p](s)+d(p,q)\spnorm{v_p}
= v_p(s)+\beta_{p,q}.
\]
where $\beta_{p,q}:=\alpha_p+d(p,q)\spnorm{v_p}$ and we use \eqref{eqn:wc_Bellman_eqn} that $\cB_p[v_p] -\alpha_p=v_p(s)$. 
Thus \(v_p\) satisfies the Bellman inequality for the controlled kernel \(q\). 

By the comparison theorem for average-reward MDPs \citep[Theorem 9.1.2]{puterman2014MDP},
\[
\alpha_q\le \beta_{p,q}
=
\alpha_p+d(p,q)\spnorm{v_p}.
\]
Exchanging the roles of \(p\) and \(q\) gives
\[
\alpha_p\le \alpha_q+d(p,q)\spnorm{v_q}.
\]
Hence
\begin{equation}\label{eq:alpha_local_continuity_bound}
|\alpha_q-\alpha_p|
\le d(p,q)\max\{\spnorm{v_p},\spnorm{v_q}\}.
\end{equation}

It remains to show that \(\spnorm{v_q}\) is locally bounded around each \(p\in\cP\). Fix \(p\in\cP\), and let \(C_p\) be the communicating class in the weak-communication decomposition of \(p\). Write \(\tau_B\) for the first hitting time of a set \(B\subseteq S\).

First, we control the entrance time into \(C_p\) under arbitrary stationary deterministic policies \(\pi\in\Pi_{\mrm{SD}}\). Let \(Q_{q,\pi}\) be the restriction of the transition matrix under \((q,\pi)\) to \(S\setminus C_p\). Since every state in \(S\setminus C_p\) is transient under every stationary policy for \(p\), the matrix \(I-Q_{p,\pi}\) is invertible, and by a first transition argument
\[
[(I-Q_{p,\pi})^{-1} e](s)  = E_s^{\pi,p}\tau_{C_p} \]
for all $s\in S\setminus C_p$, where $e$ is the the vector of all ones. 
As \(q\ra Q_{q,\pi}\) is continuous and \(\Pi_{\mrm{SD}}\) is finite, there exist \(\delta_p^{(1)}>0\) and \(M_p<\infty\) such that
\begin{equation}\label{eq:enter_Cp_uniform_bound}
\sup_{q:d(p,q)\le \delta_p^{(1)}}
\max_{\pi\in\Pi_{\mrm{SD}}}\max_{s\in S\setminus C_p}
E_s^{\pi,q}\tau_{C_p}
\le M_p.
\end{equation}

Next, we control the hitting time of any prescribed state in \(C_p\) under a suitable stationary policy. Fix \(s'\in C_p\), and consider the directed graph on \(S\) with an edge \(x\to y\) whenever \(p(y|x,a)>0\) for some \(a\in A\). Every state \(x\in S\) has a directed path to \(s'\): this is clear for \(x\in C_p\); if \(x\notin C_p\), then it is transient and hence some $y\in C_p$ is reachable from \(x\). But $y$ can reach any $s\in C_p$ by the definition of $C_p$. So, $x$ can reach $s'$ through $y$. 

Let \(\ell_{s'}(x)\) be the graph distance from \(x\) to \(s'\). For each \(x\neq s'\), choose a successor \(y_{x,s'}\) and an action \(a_{x,s'}\in A\) such that
\[
\ell_{s'}(y_{x,s'})=\ell_{s'}(x)-1,
\qquad
p(y_{x,s'}|x,a_{x,s'})>0.
\]
Define a stationary deterministic policy \(\pi^{ s'}\in\Pi_{\mrm{SD}}\) by \(\pi^{ s'}(x)=a_{x,s'}\) for \(x\neq s'\) and take arbitrary action at \(s'\). Under \(\pi^{ s'}\), from every \(x\in S\) there is a path to \(s'\) of length at most
\[
m_p(s'):=\max_{x\in S}\ell_{s'}(x)<\infty
\]
and with strictly positive probability under $(\pi^{ s'},p)$. Since there are only finitely many such designated paths, their probabilities admit a strictly positive minimum:
\[
\eta_p(s')
:=\min_{x\in S}
P_x^{\pi^{ s'},p}(\tau_{s'}\le m_p(s'))
>0.
\]
By continuity of hitting probabilities in $q$, there exists \(\delta_p^{(2)}(s')>0\) such that, whenever \(d(p,q)\le \delta_p^{(2)}(s')\), $$P_x^{\pi^{ s'},q}(\tau_{s'}\le m_p(s'))\ge \eta_p(s')/2,$$
for all $x\in S.$
By the Markov property, this implies the geometric tail bound
\[
P_x^{\pi^{ s'},q}(\tau_{s'}>km_p(s'))\le (1-\eta_p(s')/2)^k,
\qquad k\ge 0.
\]
Therefore, let $\delta^{(2)}_p:= \min_{s'\in C_p}\delta_p^{(2)}(s')$
\begin{equation}\label{eq:hit_state_uniform_bound}
\sup_{q:d(p,q)\le \delta_p^{(2)}}
\max_{s\in S,s'\in C_p}E_s^{\pi^{ s'},q}\tau_{s'}
\le \max_{s'\in C_p}\frac{2m_p(s')}{\eta_p(s')}
=:L_p.
\end{equation}

Now fix \(q\in\cP\) with
\[
d(p,q)\le \delta_p:=\min\{\delta_p^{(1)},\delta_p^{(2)}\},
\]
and choose
\[
\bar s=\bar s(q)\in\arg\max_{s\in C_p} v_q(s).
\]
Let \(\pi_q^*\in\Pi_{\mrm{SD}}\) be a stationary deterministic policy attaining the maximum in the Bellman equation for \(q\). Since \(0\le r\le 1\), we also have \(0\le \alpha_q\le 1\), hence \(-1\le r-\alpha_q\le 1\). Moreover the Bellman equation implies that 
$$G_n:=v_q(X_{n}) + \sum_{t=0}^{n-1}[r(X_t,\pi_q^*(X_t))-\alpha_q
]$$ is a $P_s^{\pi^*_q,q}$ martingale.

Since $\delta_p \leq \delta_p^{(1)}$, $P_s^{\pi^*_q,q}(\tau_{C_p} < \infty) = 1$. Therefore, stopping at \(\tau_{C_p}\wedge n\) and letting \(n\uparrow\infty\), we obtain for every \(s\in S\),
\[
v_q(s)
=
E_s^{\pi_q^*,q}\!\left[
\sum_{t=0}^{\tau_{C_p}-1}\bigl(r(X_t,\pi_q^*(X_t))-\alpha_q\bigr)
+v_q(X_{\tau_{C_p}})
\right]
\le v_q(\bar s)+E_s^{\pi_q^*,q}\tau_{C_p},
\]
because \(X_{\tau_{C_p}}\in C_p\) and \(v_q(X_{\tau_{C_p}})\le v_q(\bar s)\). Therefore, by \eqref{eq:enter_Cp_uniform_bound},
\begin{equation}\label{eq:upper_part_span_bound}
\max_{s\in S}v_q(s)\le v_q(\bar s)+M_p.
\end{equation}

On the other hand, for any \(s\in S\), apply the same arguments to the Bellman inequality
\[
v_q(s)\ge r(s,\pi^{\bar s}(s))-\alpha_q+\sum_{s'\in S}q(s'|s,\pi^{\bar s}(s))v_q(s')
\]
and the stopping time \(\tau_{\bar s}\wedge n\) and letting \(n\uparrow\infty\) yields
\[
v_q(s)
\ge
E_s^{\pi^{\bar s},q}\!\left[
\sum_{t=0}^{\tau_{\bar s}-1}\bigl(r(X_t,\pi^{\bar s}(X_t))-\alpha_q\bigr)
+v_q(\bar s)
\right]
\ge v_q(\bar s)-E_s^{\pi^{\bar s},q}\tau_{\bar s}.
\]
Hence, by \eqref{eq:hit_state_uniform_bound},
\begin{equation}\label{eq:lower_part_span_bound}
v_q(\bar s)-\min_{s\in S}v_q(s)
\le
\max_{s\in S}E_s^{\pi^{\bar s},q}\tau_{\bar s}
\le L_p.
\end{equation}
Combining \eqref{eq:upper_part_span_bound} and \eqref{eq:lower_part_span_bound}, we conclude that
\[
\sup_{q:d(p,q)\le \delta_p}\spnorm{v_q}\le M_p+L_p<\infty.
\]

Returning to \eqref{eq:alpha_local_continuity_bound}, for \(d(p,q)\le \delta_p\),
\[
|\alpha_q-\alpha_p|
\le d(p,q)\bigl(\spnorm{v_p}+M_p+L_p\bigr).
\]
Thus \(q\to p\) implies \(\alpha_q\to \alpha_p\). Therefore \(p\ra \alpha_p\) is continuous on \(\cP\). Since \(\cP\) is compact, there exists \(p^*\in\cP\) such that $\alpha_{p^*}=\min_{p\in\cP}\alpha_p=\alpha^*.$
Therefore, Assumption \ref{assump:existence_of_pstar} holds.
\end{proof}

\subsection{Proof of Lemma \ref{lemma:liminf_of_weighted_seq}}\label{sec:proof:lemma:liminf_of_weighted_seq}
\begin{proof}
Suppose, to the contrary, that
\[
\liminf_{T\to\infty} w(T)g(T) > 0.
\]

Since $w(T)>0$, there exists $\epsilon>0$ and $T_0\in\N$ such that for all $T\ge T_0$,
$$w(T)g(T)\ge \epsilon\implies g(T)\ge \frac{\epsilon}{w(T)}.$$
Because $w(T)\to 0$, we have $\varepsilon/w(T)\to \infty$. Hence $\liminf_{T\to\infty} g(T)=+\infty,$ contradicting the assumption that $\liminf_{T\to\infty} g(T)\le C<\infty$.

Therefore, we conclude that $\liminf_{T\to\infty} w(T)g(T)\le 0$, as claimed.
\end{proof}

\subsection{Proof of Lemma~\ref{lemma:dirichlet-prior}}\label{sec:proof:lemma:dirichlet-prior}
\begin{proof}
Fix $s\in S$ and write $p(\cd):=P(\cd | s)$. Let $\nu$ be the uniform distribution on $S$, i.e., $\nu(s'):=1/\card( S)$ for all  $s'\in S$. Define
\[
q^*(s'):=e^{-\epsilon}p(s')+(1-e^{-\epsilon})\nu(s'),\quad \forall s'\in S.
\]
Then $q^*\in\cP(\cS)$ and $q^*(s')>e^{-\epsilon}p(s')$ for all $s'\in S$, so $q^*$ lies in the relative interior of
\[
U_{s}(\epsilon,p):=\set{q\in\cP(\cS): q(s')\ge e^{-\epsilon}p(s'), \forall s'\in S}.
\]
Hence, there exists $\delta>0$ such that $$\set{q\in\cP(\cS):\max_{s'\in S}{|q(s')-q^*(s')|}<\delta}\subseteq   U_{s}(\epsilon,p).$$

Because $\gamma(s' | s)>0$ for all $s'$, the Dirichlet law $\Pi_s = \mrm{Dirichlet}(\gamma(\cd | s))$ has a density proportional to $$d\Pi_s(q) \propto \prod_{s'\in S}q(s')^{\gamma(s' | s)-1}dq$$ on $\cP(\cS)$, which is finite and strictly positive in a neighborhood of $q^*$. Therefore, $\Pi_s\crbk{U_{s}(\epsilon,p)}>0.$

Note that $$U(\epsilon,P) = \set{Q:Q(\cd|s)\in U_s(\epsilon,P(\cd|s))} = \bigtimes_{s\in S} U_{s}(\epsilon,P(\cd|s)).$$ Since the rows are independent under $\Pi$, we conclude
\[
\Pi(U(\epsilon,P))
= \prod_{s\in S}\Pi_s\crbk{U_{s}(\epsilon,P(\cd|s))}
>0.
\]
This completes the proof.
\end{proof}

\subsection{Proof of Lemma \ref{lemma:ville_bound}}\label{sec:proof:lemma:ville_bound}

\begin{proof}
Fix $c>0$ and define the stopping time $\sigma:=\inf\set{n\ge 0:\ Z_n\ge 1/c}$. Note that since $\sigma(Z_t:t\leq n)\subseteq  \cF_n$, $\sigma$ is a $\cF_n$ stopping time. 

For $m\ge 0$, the bounded stopping time $\sigma\wedge m$ satisfies the optional sampling inequality for supermartingales:
\[
E{Z_{\sigma\wedge m}}\le E{Z_0}=1.
\]
On the event $\set{\sigma\le m}$, one has $Z_{\sigma\wedge m}=Z_\sigma\ge 1/c$, hence
\begin{align*}
1\ge E{Z_{\sigma\wedge m}} \stackrel{(i)}{\ge} E[\1\set{\sigma\leq m}Z_\sigma] \ge \frac{P\crbk{\sigma\le m}}{c}.
\end{align*}
where $(i)$ uses the nonnegativity of $Z$. 
Therefore $P\crbk{\sigma\le m}\le c$ for every $m$.
Letting $m\to\infty$ gives
\[
P\crbk{\sup_{n\ge 0} Z_n\ge \frac{1}{c}}=P\crbk{\sigma<\infty}\le c.
\]
\end{proof}

\subsection{Proof of Lemma~\ref{lemma:mixture-reduction}}\label{sec:proof:lemma:mixture-reduction}
\begin{proof}
Fix $\epsilon>0$ and recall from Lemma \ref{lemma:dirichlet-prior} that $\Pi(U(\epsilon,P))>0$.
By \eqref{eqn:Ueps-dominates},
\begin{align*}
\Lambda_n
&\ge\int_{U(\epsilon,P)} \prod_{t=0}^{n-1}\frac{Q(X_{t+1}| X_t)}{P_0(X_{t+1}| X_t)} d\Pi(Q)\\
&\ge\Pi(U(\epsilon,P)) e^{-\epsilon n} \prod_{t=0}^{n-1}\frac{P(X_{t+1}| X_t)}{P_0(X_{t+1}| X_t)}.
\end{align*}
    
Recall from \eqref{eqn:def_beta} that $\beta(\rho,\epsilon) = \log(1/(\rho\Pi(U(\epsilon,P))))$. Hence, on the event $\set{S_n(\epsilon)\ge \beta(\rho,\epsilon)}$ we have $\Lambda_n\ge 1/\rho$, so $\tau_\rho\le n$.
Since also $\tau_\rho\le n$ whenever $P_0(X_n| X_{n-1})=0$, taking the infimum over $n\ge 1$ yields $\tau_\rho\le \sigma(\epsilon)$.
\end{proof}

\subsection{Proof of Lemma~\ref{lemma:regen-crossing}}\label{sec:proof:lemma:regen-crossing}
\begin{proof}
Fix $y\in\R$ and write $R_k:=\sum_{i=1}^k M_i$ (with $R_0:=0$).
By definition,
\[
\set{N(y)>k}\subseteq   \set{y+R_k<\beta}.
\]
Choose $t>0$ such that $ E_{s_C}^Pe^{-tM_1}<\infty$. Since $E_{s_C}^PM_1>0$, the map $u\ra E_{s_C}^P e^{-uM_1}$ is differentiable at $u=0$ with derivative $-E_{s_C}^P M_1<0$, hence there exists $t_0$ such that for all $t\in(0,t_0]$ $E_{s_C}^Pe^{-tM_1}<1$.

For $t\in(0,t_0]$, let $\lambda(t):=-\log E_{s_C}^P e^{-tM_1}>0$.
Markov's inequality and independence give, for all $k\ge 1$,
\begin{align*}
P_{s_C}^P\crbk{N(y)>k}
&\le P_{s_C}^P\crbk{R_k<\beta-y}\\
&\le e^{t(\beta-y)}E_{s_C}^P e^{-tR_k}\\
&= e^{t(\beta-y)}\crbk{E_{s_C}^Pe^{-tM_1}}^k\\
&=\exp\sqbk{t(\beta-y)-k\lambda(t)}.
\end{align*}
Therefore,
\begin{align*}
E_{s_C}^PN(y)&=\sum_{k=0}^\infty P_{s_C}^P\crbk{N(y)>k}\\
&\le 1+\sum_{k=1}^\infty \min\set{1,\exp\sqbk{t(\beta-y)-k\lambda(t)}}\\
&\le 1+\frac{t}{\lambda(t)}(\beta-y)_+ + \frac{1}{1-e^{-\lambda(t)}}\\
&\le 2+\frac{t}{\lambda(t)}(\beta-y)_+ + \frac{1}{\lambda(t)}
\end{align*}
where the last inequality follows from the observation that $e^{x}\geq x+1$, hence $1-e^{-x}\geq \frac{x}{x+1}$.

Since $N(y)$ is a stopping time for the natural filtration of $\set{(\xi_k,M_k)}$ and $E_{s_C}^PN(y)<\infty$, Wald's identity yields
\[
g(y)=E_{s_C}^P\sqbk{\sum_{i=1}^{N(y)}\xi_i}=E_{s_C}^P\xi_1\cdot E_{s_C}^PN(y).
\]

By differentiability of $u\ra \log E_{s_C}^Pe^{-uM_1}$ at $0$ and $E_{s_C}^PM_1>0$, we may shrink $t$ (if needed) so that $\lambda(t)/t\ge \frac12 EM_1$. With this choice,
\begin{equation}\label{eqn:to_use_gy_first_bd}
g(y)\le \frac{2E_{s_C}^P\xi_1}{E_{s_C}^P M_1}(\beta-y)_+ + \underbrace{\crbk{2+ \frac{2}{tE_{s_C}^PM_1}}E_{s_C}^P\xi_1}_{:=\kappa_C}.
\end{equation}
Note that, under this choice of $t$, $\kappa_C$ only depends on $(P_0,P,C)$. 

Next, by the Markov renewal-reward identity for regenerative processes,
\begin{equation}\label{eqn:EM1}
\begin{aligned}
E_{s_C}^P M_1 
&= E_{s_C}^P\sqbk{\sum_{t=0}^{\xi_1-1}\crbk{\log\frac{P(X_{t+1}|X_t)}{P_0(X_{t+1}|X_t)}-\epsilon}}\\
&=E_{s_C}^P\xi_1 \sum_{s\in C}\nu_{P,C}(s)[\mrm{KL}\crbk{P(\cd | s)\|P_0(\cd | s)} - \epsilon]\\
&=\frac12 I_C(P\|P_0)\,E_{s_C}^P\xi_1
\end{aligned}
\end{equation}
where we recall that $\epsilon=\frac12 I_C(P\|P_0)$.

Finally, by \eqref{eqn:EM1},
\[
\frac{2E_{s_C}^P\xi_1}{E_{s_C}^P M_1}
=
\frac{2E_{s_C}^P\xi_1}{\frac12 I_C(P\|P_0)\,E_{s_C}^P\xi_1}
=
\frac{4}{I_C(P\|P_0)}.
\]
Plugging this into \eqref{eqn:to_use_gy_first_bd} proves \eqref{eqn:regen-crossing-bound}.
\end{proof}

\subsection{Proof of Lemma \ref{lemma:wc_unichain_pi}}\label{sec:proof:lemma:wc_unichain_pi}

\begin{proof}
Since $p$ is weakly communicating, let $C_p\subseteq S$ be as in Definition~\ref{def:wc}. It is standard for finite weakly communicating average-reward MDPs that there exists an average-reward optimal deterministic stationary policy \citep{puterman2014MDP}; fix one and denote it by $\bar\Delta$, and write $\bar P:=p_{\bar\Delta}$.

Let $\bar C$ be any closed communicating class of $\bar P$. Since every $s\in C_p^c$ is transient under every stationary policy, no recurrent class can intersect $C_p^c$, hence $\bar C\subseteq C_p$.

Define a directed graph on the vertex set $C_p$ by putting an edge $s\to s'$ if there exists an action $a\in A$ with $p(s'|s,a)>0$. The weak communication property implies that this graph is strongly connected; in particular, for every $s\in C_p$ there exists a directed path from $s$ to some state in $\bar C$.

For $s\in C_p$, let $d(s)$ denote the graph distance of $s$ from $\bar C$, i.e.,
\[
d(s):=\min\{n\ge 0:\exists\, s'\in \bar C\ \text{s.t. there is a directed path of length $n$ from $s$ to $s'$}\}.
\]
Then $d(s)=0$ if and only if $s\in \bar C$. Moreover, since $\bar C\subseteq C_p$ and the graph on $C_p$ is strongly connected, $d(s)<\infty$ for all $s\in C_p$.

For each $s\in C_p\setminus \bar C$, choose a state $\mathrm{nxt}(s)\in C_p$  and an action $a_s\in A$ such that
\[
d(\mathrm{nxt}(s))=d(s)-1,\quad\text{and}\quad  p(\mathrm{nxt}(s)\mid s,a_s)>0.
\]
Now define a deterministic stationary policy $\Delta$ by
\[
\Delta(\cd|s)=
\begin{cases}
\bar\Delta(\cd|s), & s\in \bar C,\\
\delta_{a_s}, & s\in C_p\setminus \bar C,\\
\bar\Delta(\cd|s), & s\in C_p^c.
\end{cases}
\]
Let $P:=p_\Delta$. 

Since $\Delta$ agrees with $\bar\Delta$ on $\bar C$, we have $P(\cdot\mid s)=\bar P(\cdot\mid s)$ for all $s\in \bar C$, and therefore $\bar C$ remains a closed communicating class of $P$.

Since all states in $C_p^c$ are transient under every stationary policy (in particular under $\Delta$), $P$ cannot have any closed communicating class contained in $C_p^c$.

Next, fix any $s\in C_p\setminus \bar C$. By construction, iterating $\mathrm{nxt}(\cdot)$ produces a directed path of length $d(s)$ that stays in $C_p\setminus \bar C$ until it reaches $\bar C$, and each transition along this path has strictly positive probability under $\Delta$. Hence
\[
P^{d(s)}(\bar C\mid s)>0,\qquad \forall s\in C_p.
\]
In particular, no closed communicating class of $P$ contained in $C_p$ can be disjoint from $\bar C$.

Combining the previous observations, any closed communicating class of $P$ must lie in $C_p$ and must intersect $\bar C$. Since $\bar C$ is itself closed, it follows that $P$ has $\bar C$ as its unique closed communicating class; i.e., $P$ is unichain. 

Finally, $\Delta$ coincides with the optimal policy $\bar\Delta$ on $\bar C$, and all states outside $\bar C$ are transient under $P$. Therefore, the long-run average reward under $\Delta$ equals the long-run average reward attained on $\bar C$ under $\bar\Delta$, which is $\alpha_p$. Hence, $\Delta$ is optimal.
\end{proof}

\subsection{Proof of Lemma \ref{lemma:stopped_TV_sum_bd}}
\begin{proof}
For $n\ge 0$, define
\[
M_n:=v^*(X_n)+\sum_{t=0}^{n-1}\crbk{r(X_t,A_t)-\alpha^*}.
\]

Since  $P = P_0$, by the Markov property under stationary policy $\Delta^*$,
\[
E_\mu^{\Delta^*,p}\sqbkcond{v^*(X_{n+1})}{\cH_n}
=
\sum_{s'\in S}P_0(s'|X_n)\,v^*(s').
\]
Then, by the definition of $v^*$ in \eqref{eqn:vstar_eval_eqn} 
\[
\sum_{a\in A}\Delta^*(a|X_n)r(X_n,a)-\alpha^*
=
v^*(X_n)-\sum_{s'\in S}P_0(s'|X_n)\,v^*(s')
=
v^*(X_n)-E_\mu^{\Delta^*,p}\sqbkcond{v^*(X_{n+1})}{\cH_n}.
\]
Therefore,
\begin{align*}
E_\mu^{\Delta^*,p}\sqbkcond{M_{n+1}-M_n}{\cH_n}
&=
E_\mu^{\Delta^*,p}\sqbkcond{v^*(X_{n+1})-v^*(X_n)+r(X_n,A_n)-\alpha^*}{\cH_n}\\
&=
E_\mu^{\Delta^*,p}\sqbkcond{v^*(X_{n+1})}{\cH_n}-v^*(X_n)+\sum_{a\in A}\Delta^*(a|X_n)r(X_n,a)-\alpha^*\\
&=0.
\end{align*}
Hence $\set{M_n:n\ge 0}$ is a $\cH_n$-martingale.

Since $S$ is finite, $v^*$ is bounded and $\theta\le T$ a.s., so $M_\theta$ is integrable and optional stopping applies:
\[
E_\mu^{\Delta^*,p}M_\theta=E_\mu^{\Delta^*,p}M_0=E_\mu^{\Delta^*,p}v^*(X_0).
\]
Rearranging this gives \eqref{eqn:stopped_sum_identity}.
Finally, $v^*(X_0)-v^*(X_\theta)\ge -\spnorm{v^*}$, which implies \eqref{eqn:stopped_sum_span_bd}.
\end{proof}

\subsection{Proof of Lemma \ref{lemma:RL_tv_lb}}\label{sec:proof:lemma:RL_tv_lb}
\begin{proof}
Recall the definition of online RL policies in Definition \ref{def:online_RL}. By the assumption that $p^*$ is unique, $\alpha_p > \alpha^*$. For all $\mu\in\cP(\cS)$
$$\begin{aligned}0 &= -\limsup_{T\ra\infty} \frac{1}{T}E_{\mu}^{\piRL,p}R_{r,p}(T)  \\
&=  \liminf_{T\ra\infty}\frac{1}{T}E_\mu^{\piRL,p}\sum_{t=0}^{T-1}[r(X_t,A_t) - \alpha^*+\alpha^* - \alpha_p],
\end{aligned}
$$
which means
$$
\alpha_p - \alpha^*= \liminf_{T\ra\infty}\frac{1}{T}E_\mu^{\piRL,p}\sum_{t=0}^{T-1}[r(X_t,A_t) - \alpha^*].
$$
In particular, this implies that there exists $T_{0,\mu}$ such that for all $T'\geq T_{0,\mu}$, 
$$ E_\mu^{\piRL,p}\sum_{t=0}^{T'-1}[r(X_t,A_t) - \alpha^*]\geq \frac{(\alpha_p-\alpha^*)T'}{2}.     
$$
But since $r(s,a) - \alpha^*\in[-1,1]$ and $0 < \alpha_p - \alpha^*\leq 2$, we have that for all $T\geq 0$
\begin{align*}
E_\mu^{\piRL,p}\sum_{t=0}^{T-1}[r(X_t,A_t) - \alpha^*]&\geq \frac{(\alpha_p-\alpha^*)T}{2}\1\set{T \geq T_{0,\mu}} -T\1\set{T<T_{0,\mu}}\\
&= \frac{(\alpha_p-\alpha^*)T}{2} -\crbk{\frac{(\alpha_p-\alpha^*)T}{2}+T}\1\set{T<T_{0,\mu}}\\
&\geq \frac{(\alpha_p-\alpha^*)T}{2} - 2T_{0,\mu}. 
\end{align*}
Therefore, letting $T_0 = \max_{s\in S}T_{0,\delta_s}$ completes the proof. 
\end{proof}

\end{document}